\theoremstyle{plain} 
\newtheorem{theorem}{\indent\bf Theorem}[section]
\theoremstyle{definition} 
\newtheorem{example}[theorem]{\indent\bf Example}
\newtheorem{thm}{Theorem}[section]
\newtheorem{cor}[thm]{Corollary}
\newtheorem{lem}[thm]{Lemma}
\theoremstyle{definition}
\newtheorem{defn}{Definition}[section]
\theoremstyle{remark}
\newtheorem{rem}{Remark}[section]
\newcommand{\be}{\begin{equation}}
	\newcommand{\ee}{\end{equation}}
\newcommand{\bea}{\begin{eqnarray}}
	\newcommand{\eea}{\end{eqnarray}}
\newcommand{\ben}{\begin{eqnarray*}}
	\newcommand{\een}{\end{eqnarray*}}
\newcommand{\bt}{\begin{split}}
	\newcommand{\et}{\end{split}}
\newcommand{\bet}{\begin{equation}}
	\newcommand{\mc}{\mathbb{C}}
	\newcommand{\mr}{\mathbb{R}}
	\newcommand{\ra}{\rightarrow}
\newcommand{\bc}{\begin{center}}
\newcommand{\ec}{\end{center}}
\newcommand{\bi}{\begin{itemize}}
\newcommand{\ei}{\end{itemize}}
\begin{document}
		\title[Zero mass conjecture]
{Log truncated threshold and zero mass conjecture}

		\author[F. Deng]{Fusheng Deng}
		\address{Fusheng Deng: \ School of Mathematical Sciences, University of Chinese Academy of Sciences\\ Beijing 100049, P. R. China}
		\email{fshdeng@ucas.ac.cn}
		
	\author[Y. Li]{Yinji Li}
	\address{Yinji Li:  Institute of Mathematics\\Academy of Mathematics and Systems Science\\Chinese Academy of
		Sciences\\Beijing\\100190\\P. R. China}
	\email{liyinji@amss.ac.cn}
	\author[Q. Liu]{Qunhuan Liu}
	\address{Qunhuan Liu: \ School of Mathematical Sciences, University of Chinese Academy of Sciences\\ Beijing 100049, P. R. China}
		\email{liuqunhuan23@mails.ucas.edu.cn}
     \author[Z. Wang]{Zhiwei Wang}
		\address{Zhiwei Wang: Laboratory of Mathematics and Complex Systems (Ministry of Education)\\ School of Mathematical Sciences\\ Beijing Normal University\\ Beijing 100875\\ P. R. China}
		\email{zhiwei@bnu.edu.cn}
	\author[X. Zhou]{Xiangyu Zhou}
		\address{Xiangyu Zhou: Institute of Mathematics\\Academy of Mathematics and Systems Science\\and Hua Loo-Keng Key
			Laboratory of Mathematics\\Chinese Academy of
			Sciences\\Beijing\\100190\\P. R. China}
		\email{xyzhou@math.ac.cn}

		\thanks{This research is supported by National Key R\&D Program of China (No. 2021YFA1002600, No. 2021YFA1003100),
NSFC grants (No. 12471079, No. 12071035, No. 12288201), and the Fundamental Research Funds for the Central Universities.}
	
\subjclass{32U05, 32U25, 32U40, 32W20, 31C10}

\keywords{plurisubharmonic function, log truncated threshold, Cegrell class, higher Lelong numbers, residual Monge-Amp\`ere mass}
		
\begin{abstract}
For plurisubharmonic functions $\varphi$ and $\psi$ lying in the Cegrell class of $\mathbb{B}^n$ and $\mathbb{B}^m$ respectively such that the Lelong number of $\varphi$ at the origin vanishes,
we show that the mass of the origin with respect to the measure $(dd^c\max\{\varphi(z), \psi(Az)\})^n$ on $\mc^n$ is zero for $A\in \mbox{Hom}(\mc^n,\mc^m)=\mc^{nm}$ outside a pluripolar set.
For a plurisubharmonic function $\varphi$ near the origin in $\mc^n$,
we introduce  a new concept coined the \emph{log truncated threshold} of $\varphi$ at $0$ which reflects a singular property of $\varphi$ via a log function near the origin (denoted by $lt(\varphi,0)$)
and derive an optimal estimate of the residual Monge-Amp\`ere mass of $\varphi$ at $0$
in terms of its higher order Lelong numbers $\nu_j(\varphi)$ at $0$ for $1\leq j\leq n-1$, in the case that  $lt(\varphi,0)<\infty$.
These results provide a new approach to the zero mass conjecture of Guedj and Rashkovskii, and unify and strengthen 
well-known results about this conjecture.
\end{abstract}
		\maketitle
		\tableofcontents

\section{Introduction}
The study of singularities of plurisubharmonic functions play a central role in several complex variables and complex geometry.
There are a package of remarkable invariants that encodes properties of singularities of plurisubharmonic functions,
such as  (directional, generalized) Lelong numbers, the complex singularity exponents, the multiplier ideal sheaves, the relative type, and the residual (mixed) Monge-Amp\`ere mass
(see e.g. \cite{Dem12} for a systematic introduction).
These invariants and various relations between them are among the central topics of study in several complex variables (e.g., see \cite{Siu74}, \cite{BGZ}, \cite{BFJ08}, \cite{Dem92}, \cite{DD, DD19}, \cite{Ra01, Ra06, Ra09, Ra13-1, Ra13-2}, and a nice survey \cite{K} about the studies before 2000 and a survey \cite{Z} about recent study).

The present paper aims to study the (higher) Lelong number and the residual (mixed) Monge-Amp\`ere mass of plurisubharmonic functions and the relations between them.
In this direction, a fundamental and famous problem is \emph{the zero mass conjecture} proposed by Guedj and Rashkovskii respectively,
which states that residual  Monge-Amp\`ere mass of a plurisuhbarmonic function with isolated singularity must vanish provided that it has zero Lelong number (see \cite[Question 7]{DGZ16}).
On the other hand, it is known that the converse of the conjecture is true, namely the vanishing of the residual  Monge-Amp\`ere mass
implies the vanishing of the Lelong number.

The zero mass conjecture has been studied by many authors,
and it is known that the answer is positive for some interesting cases (see e.g. \cite{Ra01}\cite{BFJ08}\cite{Wi05}\cite{Guj10}\cite{Ra13-1}\cite{ACP19}\cite{L23}\cite{HLX23}\cite{HLX24}).
In the present work, we use different methods to study this conjecture including introduction and study of a new concept {\it log truncated threshold}.
Our main results can cover and strength almost all known results about this conjecture in the literatures.


We need to consider plurisubharmonic functions whose  Monge-Amp\`ere is well defined.
It is shown in the seminal papers \cite{Ce98},\cite{Ce04} that such plurisubharmonic functions should belong to a special class,
now known as the \emph{Cegrell class} (see Definition \ref{def:Cegrell class}).
The Cegrell class on a bounded hyperconvex domain $\Omega$ is denoted by $\mathcal{E}(\Omega)$. For an extension of Cegrell class to compact K\"ahler manifolds, the reader is refereed to \cite{CGZ}.

If $\varphi$ is plurisubharmonic and negative on some neighborhood $\Omega$ of the origin and is locally bounded on $\Omega\backslash\{0\}$,
then $\varphi\in\mathcal{E}(\Omega)$ and the mass of origin with respect to the Monge-Amp\`ere $(dd^c\varphi)^n$ makes sense and is denoted by $(dd^c\varphi)^n(0)$ or $\int_{\{0\}}(dd^c\varphi)^n$.
In this case, we say that $\varphi$ has isolated singularity at $0$.
In general, if $\varphi_1, \cdots, \varphi_n\in \mathcal{E}(\Omega)$ for some neighborhood $\Omega$ of the origin in $\mc^n$,
then the mixed Monge-Amp\`ere measure $dd^c\varphi_1\wedge\cdots\wedge dd^c\varphi_n$ is a well defined positive measure on $\Omega$ and its mass at the origin will be denoted by
$$\int_{\{0\}}dd^c\varphi_1\wedge\cdots\wedge dd^c\varphi_n.$$

We recall the following
\begin{defn}\label{def:Lelong number higher degree}
For $\varphi\in\mathcal{E}(\Omega)$ for some neighborhood $\Omega$ of the origin in $\mc^n$ and for  $j=1,2,\cdots,n$,
the $j$-th Lelong number of $\varphi$ at $0$ is defined to be
\begin{align*}
\int_{\{0\}}(dd^c\varphi)^j\wedge(dd^c\log|z|)^{n-j}.
\end{align*}
\end{defn}

By definition, $\nu_1(\varphi)$ is just the (ordinary) Lelong number of $\varphi$ at $0$,
and $\nu_n(\varphi)$ is the mass of the origin with respect  to the  Monge-Amp\`ere measure $(dd^c\varphi)^n$ at $0$.
As mentioned above, the main aim of the present work is to study relations between $\nu_1(0)$ and $\nu_n(0)$.
In the process, $\nu_j(0)$ with $2\leq j\leq n-1$ will play the role of a bridge that connecting the Lelong number $\nu_1(\varphi)$ and the residual Monge-Amp\`ere measure $\nu_n(\varphi)$.

For $n\geq 1$ and $r>0$, we denote by $\mathbb{B}^n_r$ the open ball in $\mc^n$ with center at the origin and radius $r$.
The unit ball $\mathbb{B}^n_1$ will be simply denoted by $\mathbb{B}^n$.

Our first main result is the following

\begin{thm}\label{thm:max final version}
Suppose $\varphi\in\mathcal{E}(\mathbb{B}^n)$, $\psi\in\mathcal{E}(\mathbb{B}^m)$, where $n,m\geq 1$.
If $\nu_1(\varphi)=0$, then for
$$A\in\mathrm{Hom}(\mc^n,\mc^m)\cong \mathbb{C}^{nm}$$
outside a pluripolar set, it holds that
\begin{align*}
\nu_n(\max\{\varphi,A^*\psi\})=0,
\end{align*}
where $A^*\psi$ is the compostion of $\psi$ and $A$ given by
$$A^*\psi(z)=\psi(Az),\ z\in \mathbb{B}^n.$$
\end{thm}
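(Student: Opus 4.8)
The plan is to reduce the statement to a Fubini-type computation for the Monge–Ampère mass of $\max\{\varphi, A^*\psi\}$ together with a slicing/pluripolar-exceptional-set argument in the parameter $A \in \mathbb{C}^{nm}$. First I would localize: since masses at the origin are local, replace $\mathbb{B}^n, \mathbb{B}^m$ by small balls and recall that $\varphi \in \mathcal{E}$, $\psi \in \mathcal{E}$ guarantee that all relevant Monge–Ampère products are well defined; I would also reduce to the case where $A$ is injective (if $\mathrm{rank}(A) < n$ one can play off $n$ versus $m$, and in any case the set of non-injective $A$ is a proper algebraic, hence pluripolar, subset of $\mathbb{C}^{nm}$ provided $m \ge n$; for $m < n$, $A^*\psi$ is plurisubharmonic but with a lower-dimensional "direction" of singularity, and one handles it by a limiting argument). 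The key analytic input is the comparison $\nu_n(\max\{\varphi, A^*\psi\}) \le$ a sum of mixed masses $\int_{\{0\}} (dd^c\varphi)^j \wedge (dd^c A^*\psi)^{n-j}$, obtained by expanding $dd^c \max\{\varphi, A^*\psi\}$ on the two regions $\{\varphi > A^*\psi\}$ and $\{\varphi < A^*\psi\}$ via the standard maximum-function calculus for the complex Monge–Ampère operator (Demailly's comparison / the fact that on $\{\varphi > A^*\psi\}$ the measure equals $(dd^c\varphi)^n$, etc., together with an upper bound for the boundary contribution).

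Next I would show each mixed term vanishes for $A$ outside a pluripolar set. The term with $j = n$ is $\nu_n(\varphi)$, which is zero because $\nu_1(\varphi) = 0$ forces $\nu_n(\varphi) = 0$ (this is the "easy direction" of the zero mass conjecture, valid for any single $\varphi \in \mathcal{E}$ — the Lelong number dominates, up to constants, all higher Lelong numbers and in particular the residual mass). For $0 \le j \le n-1$, the factor $(dd^c A^*\psi)^{n-j}$ carries at least one copy of $dd^c A^*\psi$; I would use that $A^*\psi(z) = \psi(Az)$ so that $dd^c A^*\psi = A^* (dd^c\psi)$, and then estimate the mixed mass at $0$ by pulling back along $A$ and invoking a directional Lelong number / projection argument: the quantity $\int_{\{0\}} (dd^c\varphi)^j \wedge (A^*dd^c\psi)^{n-j}$ is, after a suitable comparison with $(dd^c\log|z|)$, controlled by $\nu_j(\varphi)$ times a factor depending on how $A$ interacts with the singularity of $\psi$ — but crucially it is multiplied by $\nu_1(\varphi)$-type data, or it is an integral over $\{0\}$ of a current of bidimension $(j, j)$ wedged against a pullback, which for generic $A$ (outside a pluripolar set) sees only the origin with multiplicity governed by $\nu_1(\varphi) = 0$. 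The cleanest route: for a fixed $\varphi$ with $\nu_1(\varphi) = 0$, consider the function $(A, z) \mapsto \max\{\varphi(z), \psi(Az)\}$ as plurisubharmonic on $\mathbb{C}^{nm} \times \mathbb{B}^n$, apply a Fubini theorem for Monge–Ampère masses (the mass of $\{0\}$ in the $z$-fibre, integrated in $A$, is controlled by the mass of $\{A\} \times \{0\}$ in the total space), and then use that the total current has vanishing generic slice mass because $\nu_1(\varphi) = 0$ kills the contribution of the $\varphi$-direction while the $\psi$-direction contributes a set of measure/capacity zero of bad $A$'s.

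The main obstacle, and where I expect the real work to be, is making the "generic $A$" statement precise with a \emph{pluripolar} (not merely Lebesgue-null or first-category) exceptional set. The natural mechanism is: the function $A \mapsto \nu_n(\max\{\varphi, A^*\psi\})$ (or the relevant mixed masses) is, up to sign, the Monge–Ampère mass of a plurisubharmonic function depending on the parameter $A$, and such parametrized masses are themselves controlled by plurisubharmonic or negligible functions in $A$ — so the set where they are positive, being contained in an upper level set of a "$-\infty$ somewhere" quasi-psh function in $A$, is pluripolar. Concretely I would (i) write the offending mass as $\lim$ or $\sup$ over $r$ of $\frac{1}{r^{2k}}\int_{\mathbb{B}_r} (\cdots)$ type quantities that are psh in $A$, (ii) observe that for $A$ in a non-pluripolar set the limit would be a positive psh-dominated function, forcing a positive Lelong-type number for the total-space current, and (iii) contradict $\nu_1(\varphi) = 0$ via the Fubini/slicing inequality. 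Getting the semicontinuity and the psh-dependence on $A$ right — especially across the locus where $A$ degenerates or where $\{\varphi = A^*\psi\}$ behaves badly — is the technical heart; the maximum-function calculus and the Cegrell-class integration-by-parts must be applied with care to justify every wedge product.
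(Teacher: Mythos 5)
Your proposal contains a fatal circularity in the second paragraph. You write that for the $j=n$ term, $\nu_n(\varphi)=0$ ``because $\nu_1(\varphi)=0$ forces $\nu_n(\varphi)=0$ (this is the `easy direction' of the zero mass conjecture, valid for any single $\varphi\in\mathcal{E}$).'' This is precisely backwards: the implication $\nu_1(\varphi)=0\Rightarrow\nu_n(\varphi)=0$ \emph{is} the zero mass conjecture, and it is open. The easy direction is $\nu_n(\varphi)=0\Rightarrow\nu_1(\varphi)=0$. The Teissier-type log-convexity inequality (Lemma~\ref{lem:mix}) for a single $\varphi$ only yields $\nu_j(\varphi)=0$ for $j=1,\dots,n-1$, with no control on $\nu_n(\varphi)$ whatsoever --- indeed, if it did, the theorem (and the conjecture) would be trivial. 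Once you assume this, the rest of your argument is moot.

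This points to the key idea you are missing: the ``enlarging dimension'' trick. The paper works on $\mathbb{B}^n\times\mathbb{B}^m$ with $\Phi(z,w):=\max\{\varphi(z),\psi(w)\}$. Now $\nu_1(\Phi)\le\nu_1(\varphi)=0$, and crucially the ambient dimension is $n+m$, so the Teissier inequality applied to $\Phi$ gives $\int_{\{0\}}(dd^c\Phi)^n\wedge(dd^c\log|(z,w)|)^m=0$ --- a statement about a \emph{mixed} mass of degree $n<n+m$, which \emph{is} within reach. Crofton's formula then represents $(dd^c\log|(z,w)|)^m$ as an average of currents of integration $[S]$ over the Grassmannian $G(n,n+m)$; restricting $\Phi$ to the graph $S_A$ of $A$ gives exactly $\max\{\varphi(z),\psi(Az)\}$, and Fubini kills the mass for a.e.\ $A$. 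Your proposed slicing on $\mathbb{C}^{nm}\times\mathbb{B}^n$ does not work as stated because there the singular set of $(A,z)\mapsto\max\{\varphi(z),\psi(Az)\}$ is not isolated (it contains $\mathbb{C}^{nm}\times\{0\}$), so the relevant residual masses are not defined the way you need them to be. Your instinct on the pluripolar upgrade --- encode the mass in a psh function of the parameter $A$ so that the bad set sits inside a $-\infty$ locus --- is in the right spirit: the paper realizes it by integrating $(dd^c_z\max\{\varphi,A^*\psi\})^n$ against a cutoff of $\max\{\log|z|,\mathrm{Re}\,w\}$, checking plurisubharmonicity on $\mathbb{B}^{nm}\times H$ by an integration-by-parts computation, and then invoking Kiselman's minimal principle. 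But without the Crofton/dimension-enlarging step you have no way to conclude that the resulting psh function is not identically $-\infty$, so the mechanism cannot get off the ground.
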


Since Theorem \ref{thm:max final version} lies in the heart of the present work,
we give here a sketch of the main ideal of its argument.
The key insight can be described as ``enlarging the dimension", and the key tools involved are the Teissier type inequality for mixed residual Monge-Amp\`ere measures (Lemma \ref{lem:mix})
and Crofton's formula (Lemma \ref{lem:Crofton}).
For simplicity, we just consider such $\varphi$ and $\psi$ that have isolated singularity at the origin.
From the starting point $\nu_1(\varphi)=0$, one can see from Lemma \ref{lem:mix} that $\nu_j(\varphi)=0$ for $2\leq j\leq n-1$.
The zero mass conjecture claim that $\nu_n(\varphi)=0$, but this can not be seen from Lemma \ref{lem:mix}.
To produce information about $\nu_n(\varphi)$ from this lemma, our strategy is to enlarging the dimension $n$ by considering the function
$$\Phi(z,w):=\max\{\varphi(z),\psi(w)\}$$
defined on $\mathbb{B}^n\times\mathbb{B}^m$.
It is clear that $\nu_1(\Phi)=0$, and hence from Lemma \ref{lem:mix} we have
$$\nu_n(\Phi)=\int_{0}(dd^c\Phi)^n\wedge (dd^c\log|(z,w)|)^m=0.$$
Then using the Crofton formula in Lemma \ref{lem:Crofton} and Fubini's theorem,
we can deduce from the above equality that
$$\nu_n(\max\{\varphi,A^*\psi\})=0$$
for almost all $A\in \mathrm{Hom}(\mc^n,\mc^m)$.
This result, though its argument is simple with the insight of enlarging dimension at hand, is powerful enough
to give (with additional trick) an sharp inequality about the relations between $\nu_1(\varphi)$ and $\nu_n(\varphi)$ (Theorem \ref{thm:estimate})
and cover almost all known results in the literature about the zero mass conjecture.
The argument for the plural polarity part of Theorem \ref{thm:max final version} is technically more involved and we omit it here.

A special case of Theorem \ref{thm:max final version} is $m=n$ and $\psi=\varphi$.
If we could take $A$ to be the identity map, then $\max\{\varphi, A^*\psi\}=\varphi$ and hence we get $\nu_n(\varphi)=0$.
Unfortunately, we can just take $A$ sufficiently close to the identity map.
In this sense, Theorem \ref{thm:max final version} implies that the zero mass conjecture is ``almost" true.

\begin{rem}
A classical result in \cite{Sa76} states that pluripolar set in $\mc^n$ cannot contain a subset of positive measure of a totally real submanifold with maximal real dimension $n$.
As a consequence, the Theorem \ref{thm:n-1} holds for $A\in U(n)$ outside a subset with zero measure.
\end{rem}

With the help of a trick of B\l{}ocki (see Lemma \ref{lem:Blocki formula}), Theorem \ref{thm:max final version} can be strengthened as follows.

\begin{thm}\label{thm:n-1}
Suppose $\varphi,\psi\in\mathcal{E}(\mathbb{B}^n)$.
If $\nu_1(\varphi)=0$, then for $A\in \mathrm{Hom}(\mc^n,\mc^n)\cong \mc^{n^2}$ outside a pluripolar set, it holds that
\begin{align*}
\int_{\{0\}} (dd^c\varphi)\wedge(dd^cA^*\psi)^{n-1}
=\cdots=
\int_{\{0\}} (dd^c\varphi)^{n-1}\wedge (dd^cA^*\psi)=0.
\end{align*}
In particular, for $A\in \mathrm{Hom}(\mc^n,\mc^n)\cong \mc^{n^2}$ outside a pluripolar set, it holds that
\begin{align*}
\int_{\{0\}} (dd^c\varphi)\wedge(dd^cA^*\varphi)^{n-1}
=\cdots=
\int_{\{0\}} (dd^c\varphi)^{n-1}\wedge (dd^cA^*\varphi)=0.
\end{align*}
\end{thm}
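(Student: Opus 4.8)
The plan is to obtain Theorem~\ref{thm:n-1} as a strengthening of Theorem~\ref{thm:max final version} in the case $m=n$, by feeding the vanishing of the residual mass of a maximum into B\l ocki's formula (Lemma~\ref{lem:Blocki formula}), with the Teissier-type inequality (Lemma~\ref{lem:mix}) handling the ``pure'' directions. First I would produce a parametrized family of vanishings. For every $t>0$ one has $t\psi\in\mathcal{E}(\mathbb{B}^n)$ and $A^{*}(t\psi)=t\,A^{*}\psi$; fixing a countable dense set $S\subset(0,\infty)$ and applying Theorem~\ref{thm:max final version} to each pair $(\varphi,t\psi)$, $t\in S$, the union of the corresponding pluripolar exceptional sets is a pluripolar set $E\subset\mathrm{Hom}(\mc^{n},\mc^{n})$ such that for every $A\notin E$,
\[
\nu_n\bigl(\max\{\varphi,\ t\,A^{*}\psi\}\bigr)=0\qquad\text{for all }t\in S .
\]
Fix such an $A$ and write $u=\varphi$, $v=A^{*}\psi\in\mathcal{E}(\mathbb{B}^n)$, so $\nu_1(u)=0$.

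Since $\nu_1(u)=0$, Lemma~\ref{lem:mix} applied to $u$ against $\log|z|$ already gives $\nu_j(u)=0$ for $1\le j\le n-1$, so the only quantities left to kill are the genuinely mixed masses $m_j:=\int_{\{0\}}(dd^cu)^{j}\wedge(dd^cv)^{n-j}$ with $1\le j\le n-1$. Here I would invoke B\l ocki's formula (Lemma~\ref{lem:Blocki formula}) to compare the residual mass $\nu_n(\max\{u,tv\})$ of the maximum with the mixed residual masses $m_j(u,tv)=t^{\,n-j}m_j$, in a way that forces each $m_j$, $1\le j\le n-1$, to vanish once the left-hand side is zero for the parameters $t\in S$. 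The exponents $j=0$ and $j=n$ are necessarily beyond reach: they are $\nu_n(v)$ and $\nu_n(u)$, and as $t\to 0$ or $t\to\infty$ the maximum degenerates to one of the two functions, so their residual masses cannot be detected — which is exactly why the conclusion stops at $n-1$, and why this argument does not resolve the full zero mass conjecture. Combining this with the vanishing of the pure masses yields, for every $A\notin E$,
\[
\int_{\{0\}}(dd^c\varphi)\wedge(dd^cA^{*}\psi)^{n-1}=\cdots=\int_{\{0\}}(dd^c\varphi)^{n-1}\wedge(dd^cA^{*}\psi)=0 ,
\]
and the ``in particular'' statement is just the case $\psi=\varphi$.

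The main obstacle is the B\l ocki step: one must apply Lemma~\ref{lem:Blocki formula} so as to localize at the origin \emph{precisely} the interior mixed masses $m_j$, $1\le j\le n-1$, without the endpoint masses $\nu_n(\varphi),\nu_n(A^{*}\psi)$ entering — this is what forces the scaling parameter and a limiting argument, and it is here that the full strength of Theorem~\ref{thm:max final version} (hence the genericity of $A$) is used. A secondary, routine point is the pluripolar bookkeeping: one checks that unioning over the countable family $S$, together with any passage to the limit in $t$ (using monotonicity of $t\mapsto\max\{\varphi,tA^{*}\psi\}$ and quasicontinuity of mixed Monge-Amp\`ere masses), keeps the exceptional set pluripolar. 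One may also bypass the black-box use of Theorem~\ref{thm:max final version} and argue directly in the spirit of ``enlarging the dimension'': on $\mathbb{B}^n\times\mathbb{B}^n$ replace $(dd^c\max\{\varphi(z),\psi(w)\})^{n}$ by the mixed current $(dd^c\varphi(z))^{j}\wedge(dd^c\psi(w))^{n-j}$, whose residual mass against $(dd^c\log|(z,w)|)^{n}$ vanishes by Lemma~\ref{lem:mix} (using $\nu_1(\varphi)=0$), and then apply Crofton's formula (Lemma~\ref{lem:Crofton}) and Fubini, slicing by the graph of $A$ to recover $m_j(A)$.
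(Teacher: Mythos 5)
Your high-level roadmap is the same as the paper's — apply Theorem~\ref{thm:max final version} with $m=n$ to get $\nu_n(\max\{\varphi,A^*\psi\})=0$ for $A$ outside a pluripolar set, then feed this into B\l ocki's formula (Lemma~\ref{lem:Blocki formula}) — and the ``in particular'' reduction to $\psi=\varphi$ is of course the same. But you stop short of actually executing the decisive step: you flag the extraction of the $m_j:=\int_{\{0\}}(dd^c\varphi)^j\wedge(dd^cA^*\psi)^{n-j}$ from B\l ocki's identity as ``the main obstacle'' requiring a scaling parameter $t$ and a limiting argument, and you introduce a countable dense set $S\subset(0,\infty)$ with a countable union of pluripolar exceptional sets. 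None of that is needed, and the gap you identify is exactly what the paper closes cleanly with a single application of Lemma~\ref{lem:mix}.

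Here is the step you are missing. Integrating B\l ocki's formula over $\{0\}$ and using $(dd^c(\varphi+A^*\psi))^{n-1}=\sum_k\binom{n-1}{k}(dd^c\varphi)^k\wedge(dd^cA^*\psi)^{n-1-k}\geq\sum_k(dd^c\varphi)^k\wedge(dd^cA^*\psi)^{n-1-k}$ gives
\begin{align*}
\int_{\{0\}}(dd^c\max\{\varphi,A^*\psi\})^n
\leq
\int_{\{0\}}dd^c\max\{\varphi,A^*\psi\}\wedge(dd^c(\varphi+A^*\psi))^{n-1}
-\sum_{k=1}^{n-1} m_k .
\end{align*}
Now apply the Teissier-type log-convexity of Lemma~\ref{lem:mix} to the pair $\max\{\varphi,A^*\psi\}$ and $\varphi+A^*\psi$: this yields
\begin{align*}
\int_{\{0\}}dd^c\max\{\varphi,A^*\psi\}\wedge(dd^c(\varphi+A^*\psi))^{n-1}
\leq
\Big(\int_{\{0\}}(dd^c\max\{\varphi,A^*\psi\})^n\Big)^{\frac{1}{n}}
\Big(\int_{\{0\}}(dd^c(\varphi+A^*\psi))^{n}\Big)^{\frac{n-1}{n}} ,
\end{align*}
which vanishes once $\nu_n(\max\{\varphi,A^*\psi\})=0$. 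Feeding this back gives $0\leq -\sum_{k=1}^{n-1}m_k$, and since each $m_k\geq 0$ you get $m_k=0$ for $1\leq k\leq n-1$. No scaling parameter $t$, no countable union, no limiting argument — a single application of Theorem~\ref{thm:max final version} at $t=1$ suffices (after discarding the proper algebraic, hence pluripolar, set of degenerate $A$ so that $A^*\psi$ lies in the Cegrell class). Your worry about the endpoint masses $\nu_n(\varphi)$, $\nu_n(A^*\psi)$ ``entering'' is automatically avoided by this bound: the index $k$ in B\l ocki's second sum already runs only over $1,\ldots,n-1$.

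Your alternative ``bypass'' sketch at the end — slicing the mixed current $(dd^c\varphi(z))^j\wedge(dd^c\psi(w))^{n-j}$ on $\mathbb{B}^n\times\mathbb{B}^n$ by Crofton — is an interesting direction but also left unjustified: you would need to establish that $\int_{\{0\}}(dd^c\varphi(z))^j\wedge(dd^c\psi(w))^{n-j}\wedge(dd^c\log|(z,w)|)^n=0$ from $\nu_1(\varphi)=0$, which requires a three-function Teissier-type inequality not quite covered by Lemma~\ref{lem:mix} as stated, and you would still only get a.e.\ $A$ rather than a pluripolar exceptional set without redoing the Kiselman-minimum-principle machinery from Step~2 of Theorem~\ref{thm:max}. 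In short: right skeleton, but the central inequality is the missing content, and the extra scaffolding you add is superfluous.
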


We now turn our focus to relationships among $\nu_1(\varphi),\cdots, \nu_n(\varphi)$ for the case that $\nu_1(\varphi)$ is not necessarily vanishing.
For this purpose, we need to pose certain growth condition on the considered plurisubharmonic function by introducing the following concept.

\begin{defn}\label{def:log threshold}
Let $\varphi$ be a plurisubharmonic function defined on a neighborhood of $x\in\mc^n$.
The \emph{log truncated threshold} $lt(\varphi, x)$ of $\varphi$ at $x$ is defined as follows:
\begin{align*}
lt(\varphi,x):=\liminf_{r\rightarrow 0}F(r,x),
\end{align*}
where
\begin{align*}
F(r,x):=\lim_{\varepsilon\rightarrow 0}\sup_{r-\varepsilon<|z-x|<r+\varepsilon}\frac{\varphi(z)}{\log|z-x|}.
\end{align*}
\end{defn}

\begin{rem}
In Definition \ref{def:log threshold}, we can replace the family of ball rings $\{r-\varepsilon<|z-x|<r+\varepsilon\}$ by
the $\varepsilon$-neighborhoods of any family of topologically embedded spheres $S_r$ that surround the origin and shrink to a point as $r\ra 0$.
All the following results involving the log truncated threshold  remain true under this modification of definition.
\end{rem}

If $\varphi$ is a plurisubharonic function defined on some neighborhood of the origin such that $lt(\varphi, 0)<\infty$, it is known that $\varphi$ lies in the Cegrell class of some neighborhood of $0$
and hence $\nu_j(\varphi)$ are defined for $1\leq j\leq n$.
The following theorem gives a relation among $\nu_1(\varphi),\cdots, \nu_n(\varphi)$ in terms of  $lt(\varphi, 0)$ for functions
$\varphi$ with $lt(\varphi, 0)<\infty$.

\begin{thm}\label{thm:estimate}
Suppose $\varphi$ is a plurisubharmonic function on $\mathbb{B}^n$ such that $\gamma:=lt(\varphi,0)<+\infty$,
then for every $\beta=(\beta_1,\cdots,\beta_{n-1})$ such that $\beta_j\geq0$ and $\beta_1+\cdots+\beta_{n-1}=1$, it holds that
\begin{align*}
\nu_n(\varphi)
\leq \big(\frac{\nu_1(\varphi)}{\gamma}\big)^{\beta_1}\cdot\big(\frac{\nu_2(\varphi)}{\gamma^2}\big)^{\beta_2}\cdot...\cdot\big(\frac{\nu_{n-1}(\varphi)}{\gamma^{n-1}}\big)^{\beta_{n-1}}\cdot\gamma^{n}.
\end{align*}
In particular, if $\nu_{1}(\varphi)=0$, it holds that $\nu_n(\varphi)=0$.
\end{thm}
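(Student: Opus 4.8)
The plan is to reduce the statement, for $n\ge 2$, to the family of single‑index estimates
\[
\nu_n(\varphi)\ \le\ \gamma^{\,n-j}\,\nu_j(\varphi),\qquad 1\le j\le n-1,
\]
from which the $\beta$‑version follows by a one‑line interpolation: raising the $j$‑th inequality to the power $\beta_j$, multiplying over $j$, and using $\beta_1+\dots+\beta_{n-1}=1$ gives
\[
\frac{\nu_n(\varphi)}{\gamma^{n}}=\Big(\frac{\nu_n(\varphi)}{\gamma^{n}}\Big)^{\sum_j\beta_j}\ \le\ \prod_{j=1}^{n-1}\Big(\frac{\nu_j(\varphi)}{\gamma^{j}}\Big)^{\beta_j},
\]
which is exactly the asserted bound; the case $\beta=(1,0,\dots,0)$ is the ``in particular'' assertion, and the degenerate case $\gamma=0$ needs no interpolation since then the $j=1$ estimate already forces $\nu_n(\varphi)=0$. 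So from now on I fix $j$ and $\varepsilon>0$ and put $\ell_\varepsilon:=(\gamma+\varepsilon)\log|z|$.

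The heart of the argument --- the ``additional trick'' alluded to in the introduction --- is a regularization of $\varphi$ that lifts it above $\ell_\varepsilon$ everywhere \emph{without} altering any $\nu_i$. Since $lt(\varphi,0)=\liminf_{r\to0}F(r,0)=\gamma<\gamma+\varepsilon$, I would choose $r_k\downarrow 0$ with $F(r_k,0)<\gamma+\varepsilon/2$; unwinding the definition of $F$, for each $k$ there is a ball ring $\{r_k-\delta_k<|z|<r_k+\delta_k\}$ on which $\varphi(z)>(\gamma+\tfrac{\varepsilon}{2})\log|z|>\ell_\varepsilon(z)$. Working on a small ball $\mathbb{B}^n_\rho$ with $\varphi\in\mathcal{E}(\mathbb{B}^n_\rho)$ and $\varphi<0$, I set $\varphi_\varepsilon:=\max\{\varphi,\ell_\varepsilon\}\in\mathcal{E}(\mathbb{B}^n_\rho)$; then $\varphi_\varepsilon\ge\ell_\varepsilon$ throughout $\mathbb{B}^n_\rho$, while $\varphi_\varepsilon=\varphi$ near each sphere $\{|z|=r_k\}$. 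Because two functions of the Cegrell class that coincide near $\partial\{|z|<r_k\}$ carry the same total mass of $(dd^c\,\cdot\,)^i\wedge(dd^c\log|z|)^{n-i}$ over $\{|z|<r_k\}$ (a Stokes/integration‑by‑parts computation, $(dd^c\log|z|)^{n-i}$ being smooth near that sphere), letting $k\to\infty$ and using that these Radon measures have mass tending to $0$ on the punctured balls $\{0<|z|<r_k\}$ yields $\nu_i(\varphi)=\nu_i(\varphi_\varepsilon)$ for $1\le i\le n$.

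Now I would invoke the comparison for mixed residual Monge--Amp\`ere masses (Lemma \ref{lem:mix}, i.e. the comparison of generalized Lelong numbers) for the ordered pair $\varphi_\varepsilon\ge\ell_\varepsilon$: replacing, one slot at a time, a factor $dd^c\varphi_\varepsilon$ by the more singular factor $dd^c\ell_\varepsilon$ does not decrease the residual mass, so
\begin{align*}
\nu_n(\varphi_\varepsilon)=\int_{\{0\}}(dd^c\varphi_\varepsilon)^n
&\le \int_{\{0\}}(dd^c\varphi_\varepsilon)^{n-1}\wedge dd^c\ell_\varepsilon\\
&\le\cdots\le \int_{\{0\}}(dd^c\varphi_\varepsilon)^{j}\wedge(dd^c\ell_\varepsilon)^{n-j}=(\gamma+\varepsilon)^{\,n-j}\,\nu_j(\varphi_\varepsilon).
\end{align*}
Combining with the previous step, $\nu_n(\varphi)=\nu_n(\varphi_\varepsilon)\le(\gamma+\varepsilon)^{n-j}\nu_j(\varphi_\varepsilon)=(\gamma+\varepsilon)^{n-j}\nu_j(\varphi)$, and letting $\varepsilon\downarrow0$ gives $\nu_n(\varphi)\le\gamma^{n-j}\nu_j(\varphi)$, as needed.

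I expect the genuine obstacle to sit in the middle step: justifying, in the full generality of $\mathcal{E}$ --- where $\varphi$, hence $\varphi_\varepsilon$, may carry an honest pluripolar polar set inside $\{|z|<r_k\}$ rather than an isolated singularity --- that agreement near the boundary forces equality of the mixed masses $\int(dd^c\,\cdot\,)^i\wedge(dd^c\log|z|)^{n-i}$; this needs the appropriate convergence and Stokes‑type theorems for mixed Monge--Amp\`ere currents of Cegrell‑class functions and some care about how $(dd^c\log|z|)^{n-i}$ charges mass near the boundary sphere. The comparison step, by contrast, is a clean application of Lemma \ref{lem:mix}, and the $\beta$‑interpolation is elementary. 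For the special case $\nu_1(\varphi)=0$ one could alternatively feed $\psi=\ell_\varepsilon$ (on $\mathbb{B}^n$, $m=n$) into Theorem \ref{thm:max final version} and combine it with the same regularization, using that the residual mass at $0$ is insensitive both to $O(1)$‑perturbations and to invertible linear changes of coordinates; this is the route the introduction has in mind.
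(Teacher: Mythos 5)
Your argument is correct in substance and reaches the stated estimate, but it follows a genuinely different and more direct route than the paper. You interpolate the $\beta$-version from the single-index bounds $\nu_n(\varphi)\le\gamma^{n-j}\nu_j(\varphi)$, and obtain each such bound by (i) regularizing $\varphi$ to $\varphi_\varepsilon=\max\{\varphi,(\gamma+\varepsilon)\log|z|\}$, which by construction has isolated singularity at $0$ and satisfies $\varphi_\varepsilon\ge(\gamma+\varepsilon)\log|z|$; (ii) showing $\nu_i(\varphi_\varepsilon)=\nu_i(\varphi)$ using that $\varphi_\varepsilon=\varphi$ on thin shells around $\{|z|=r_k\}$ together with a Stokes-type argument on the balls $\{|z|<r_k\}$; and (iii) iterating a slot-by-slot monotonicity of residual masses and letting $\varepsilon\downarrow 0$. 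The paper instead runs the ``enlarging dimension" machinery: form $\Phi(z,w)=\max\{\varphi(z),\gamma_0\log|w|\}$ on $\mathbb{B}^n\times\mathbb{B}^m$, use Crofton's formula to relate $e_j(\Phi)$ to $\nu_j(\varphi)$, apply the log-convexity of Lemma \ref{lem:mix} across all $n+m$ indices, and extract the sharp constant in the limit $m\to\infty$. Your route is shorter and self-contained for this statement; the paper's route is the one that also powers Theorem \ref{thm:max final version}, where Crofton's formula is essential, so the authors get a unified framework at the cost of extra bookkeeping.

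There is one concrete misattribution worth fixing. The inequality you invoke --- that replacing a factor $dd^c\varphi_\varepsilon$ by the more singular factor $dd^c\ell_\varepsilon$ does not decrease the residual mass --- is \emph{not} Lemma \ref{lem:mix}. Lemma \ref{lem:mix} is the Teissier-type log-convexity $a_j^2\le a_{j-1}a_{j+1}$ for $a_j=\int_{\{0\}}(dd^c\varphi_1)^j\wedge(dd^c\varphi_2)^{n-j}$; on its own it only tells you the ratios $a_j/a_{j-1}$ are non-decreasing and gives no sign information on $a_j-a_{j+1}$. The monotonicity you need is Demailly's comparison theorem for generalized Lelong numbers: for a closed positive $(n-1,n-1)$-current $T$ and weights $u\ge v$ with isolated poles at $0$ one has $\int_{\{0\}}T\wedge dd^c u\le\int_{\{0\}}T\wedge dd^c v$, since $\limsup_{x\to0}u(x)/v(x)\le 1$. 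This is applicable here precisely because $\varphi_\varepsilon$ has isolated singularity, so that $T=(dd^c\varphi_\varepsilon)^{n-1-k}\wedge(dd^c\ell_\varepsilon)^k$ is a genuine closed positive current; note the paper itself appeals to ``the comparison theorem for Lelong numbers" in an analogous step. Alternatively, one comparison-theorem application at the endpoint ($a_n\le a_{n-1}$) plus Lemma \ref{lem:mix} propagates the monotonicity downward, so you may keep Lemma \ref{lem:mix} in the argument, but the comparison theorem cannot be dispensed with. With that citation corrected, the proof stands.
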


The key new insight in the argument of Theorem \ref{thm:estimate} is considering
the enlarging dimensions $n+m$ for all $m\geq 1$ together and then get the sharp estimate
by taking limit as $m\ra\infty$.
If we just consider one step of enlarging dimension, we can just get an estimate that has certain nonexact constant.

We will show in \S \ref{sec:estimate} by a series of examples that the estimate
given in Theorem \ref{thm:estimate} is optimal.

We now discuss the relations between Theorem \ref{thm:estimate} and some results known in the literatures in connection with the zero mass conjecture.
For a domain $\Omega\subset\mc^n$, we denote by $\mbox{PSH}(\Omega)$ the space of plurisubharmonic functions on $\Omega$.
We can see that the zero mass conjecture holds for a plurisubharmonic function $\varphi\in\mbox{PSH}(\mathbb{B}^n)$ such that $\varphi(z)\geq \gamma \log|z|$
for some $\gamma>0$. This recovers the fundamental main result in \cite{Wi05}.
On the other hand, it is easy to construct functions $\varphi\in\mbox{PSH}(\mathbb{B}^n)$ such that $lt(\varphi)<\infty$,
but there does not exist $\gamma>0$ such that $\varphi(z)\geq \gamma \log|z|$ for all $z\in \mathbb{B}^n$.
As we will see in \S \ref{sec:zero mass conjecture},
Theorem \ref{thm:estimate} improves the results in \cite{HLX23} and \cite{HLX24}.
For an $S^1$-invariant $\varphi\in\mbox{PSH}(\mathbb{B}^n)$,
we denote the maximal directional Lelong number of $\varphi$ by $\lambda_{\varphi}(0)$.
By taking $\beta_1=1,\beta_2=\cdots=\beta_{n-1}=0$, we obtain the following corollary, which strengthens \cite[Theorem 1.2]{HLX23} by removing the dimensional constant $C_n$.
\begin{cor}\label{cor:S1 invariant}
Suppose $\varphi\in\mbox{PSH}(\mathbb{B}^n)\cap L^{\infty}_{\mathrm{loc}}(\mathbb{B}^n\backslash\{0\})$ and $\varphi$ is $S^1$-invariant, it holds that
\begin{align*}
\nu_n(\varphi) \leq\nu_1(\varphi)\lambda_{\varphi}(0)^{n-1}.
\end{align*}
\end{cor}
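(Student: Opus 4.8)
The plan is to read the corollary off Theorem~\ref{thm:estimate}. Taking $\beta_1=1$ and $\beta_2=\cdots=\beta_{n-1}=0$ there gives
\[
\nu_n(\varphi)\ \le\ \Big(\tfrac{\nu_1(\varphi)}{\gamma}\Big)\cdot\gamma^{\,n}\ =\ \nu_1(\varphi)\,\gamma^{\,n-1},\qquad \gamma:=lt(\varphi,0),
\]
valid whenever $0<\gamma<+\infty$ (the case $\gamma=0$ forces $\nu_1(\varphi)=0$, and then the last clause of Theorem~\ref{thm:estimate} gives $\nu_n(\varphi)=0$ directly). Since $\nu_1(\varphi)\ge0$, everything reduces to proving, for an $S^1$-invariant $\varphi\in\mathrm{PSH}(\mathbb B^n)\cap L^\infty_{\mathrm{loc}}(\mathbb B^n\setminus\{0\})$, that
\[
lt(\varphi,0)\ \le\ \lambda_\varphi(0)\ <\ +\infty ;
\]
in other words, the whole content is the identification of the log truncated threshold of such a $\varphi$ with its maximal directional Lelong number.

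First I would rewrite $F(r,0)$. As $\varphi$ is locally bounded off $0$, for all small $r>0$ the sphere $\{|z|=r\}$ misses $0$ and $\varphi$ is bounded near it, so letting $\varepsilon\to0$ in Definition~\ref{def:log threshold} collapses the ball rings onto the sphere and, since $\log r<0$,
\[
F(r,0)\ =\ \frac{1}{\log r}\,\inf_{|z|=r}\varphi(z)\ =\ \sup_{|\zeta|=1}\frac{h_\zeta(\log r)}{\log r},
\qquad h_\zeta(t):=\varphi(e^{t}\zeta).
\]
By $S^1$-invariance, for each unit vector $\zeta$ the function $\tau\mapsto\varphi(e^{\tau}\zeta)$ is plurisubharmonic on a left half-plane and independent of $\operatorname{Im}\tau$, so $h_\zeta$ is convex and nondecreasing, with $h_\zeta(t)/t\to\nu_\zeta(\varphi)$ as $t\to-\infty$, where $\nu_\zeta(\varphi)$ is the directional Lelong number of $\varphi$ along $\mathbb C\zeta$ and $\lambda_\varphi(0)=\sup_{|\zeta|=1}\nu_\zeta(\varphi)$. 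Thus $lt(\varphi,0)=\liminf_{r\to0}\sup_{|\zeta|=1}h_\zeta(\log r)/\log r$ is to be compared with $\sup_{|\zeta|=1}\lim_{t\to-\infty}h_\zeta(t)/t=\lambda_\varphi(0)$; the inequality ``$\ge$'' is immediate, and only ``$\le$'' remains.

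This last inequality is the heart of the matter, and I expect the required uniformity in $\zeta$ to be the main obstacle: ray-by-ray convexity does not by itself force the worst-direction growth rate at scale $r$ to converge, as $r\to0$, to $\sup_\zeta\nu_\zeta(\varphi)$. I would handle it in two steps. (1) Pass to the blow-up $\pi\colon\widetilde{\mathbb B^n}\to\mathbb B^n$ at $0$, with exceptional divisor $E\cong\mathbb{CP}^{n-1}$; then $\pi^*\varphi$ is plurisubharmonic, locally bounded off $E$, with generic Lelong number $\nu_1(\varphi)$ along $E$ and with Lelong number $\nu_\zeta(\varphi)$ at the point of $E$ parametrising the direction $\zeta$. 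By Siu's theorem each set $\{\zeta:\nu_\zeta(\varphi)\ge c\}$ is analytic, so for $c>\nu_1(\varphi)$ it is a proper analytic subset of the space of directions, and a Noetherianity argument then gives $\lambda_\varphi(0)<+\infty$ (so that Theorem~\ref{thm:estimate} does apply). (2) Pick $r_k\to0$ realising $lt(\varphi,0)$ and $z_k$ with $|z_k|=r_k$ and $\varphi(z_k)/\log r_k\to lt(\varphi,0)$, extract a limiting direction $\zeta_k\to\zeta_*$, and --- using the plurisubharmonicity of the normalized functions $w\mapsto\varphi(r_kw)-\sup_{|z|=r_k}\varphi$ together with the classical estimate $\sup_{|z|=r}\varphi\le\nu_1(\varphi)\log r+O(1)$ and a Hartogs/sub-mean-value argument --- deduce that $\nu_{\zeta_*}(\varphi)\ge lt(\varphi,0)$, whence $\lambda_\varphi(0)\ge lt(\varphi,0)$. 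Together with the reduction in the first paragraph this proves $\nu_n(\varphi)\le\nu_1(\varphi)\lambda_\varphi(0)^{\,n-1}$. (If one takes $\lambda_\varphi(0)$ by definition to be the worst-direction Lelong number $\lim_{r\to0}\tfrac{1}{\log r}\inf_{|z|=r}\varphi(z)$, step (2) becomes a tautology and only the finiteness from step (1) is needed.)
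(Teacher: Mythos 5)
You correctly read the corollary off Theorem~\ref{thm:estimate} with $\beta_1=1$ and reduce everything to showing $lt(\varphi,0)\le\lambda_\varphi(0)<\infty$; that is indeed the whole content and matches the paper's strategy. However, your route to that inequality is both heavier than necessary and has a real gap.

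First, a definitional point: the paper defines $\lambda_\varphi(0)=\lim_{t\to-\infty}M_t(\varphi)=\inf_{t}\sup_{|\zeta|=1}\partial_t^+\varphi(e^t\zeta)$, not $\sup_{\zeta}\nu_\zeta(\varphi)=\sup_\zeta\inf_t\partial_t^+\varphi(e^t\zeta)$. By the min--max inequality one only has $\sup_\zeta\nu_\zeta(\varphi)\le\lambda_\varphi(0)$ in general, so by replacing $\lambda_\varphi(0)$ with $\sup_\zeta\nu_\zeta(\varphi)$ you set out to prove a \emph{strictly stronger} statement, namely $lt(\varphi,0)\le\sup_\zeta\nu_\zeta(\varphi)$. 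Given you also show $lt(\varphi,0)\ge\sup_\zeta\nu_\zeta(\varphi)$, you are aiming for an equality that is not what the corollary requires and that your step~(2) does not actually establish: knowing $\varphi(z_k)/\log r_k\to lt(\varphi,0)$ with $z_k/|z_k|\to\zeta_*$ does not by any obvious upper-semicontinuity give $\nu_{\zeta_*}(\varphi)\ge lt(\varphi,0)$, since the directional Lelong number along $\zeta_*$ controls growth on the ray $\mathbb C\zeta_*$ only, not on nearby rays. This is exactly the ``uniformity in $\zeta$'' obstacle you flag, and the Hartogs/sub-mean-value sketch and blow-up/Siu apparatus you gesture at are not developed enough to close it.

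The paper's argument bypasses all of this and is purely one-variable convexity. Using Lemma~\ref{lem:radial sh}, for each unit $\zeta$ the function $t\mapsto\varphi(e^t\zeta)$ is convex increasing, so for $t<t_0<-2$ and $|z|<e^{t_0}$ the chord--tangent inequality gives
\[
\frac{\varphi(z)-\varphi(e^{t_0}\tfrac{z}{|z|})}{\log|z|-t_0}\ \le\ \partial_t^+\varphi\!\left(e^{t_0}\tfrac{z}{|z|}\right)\ \le\ M_{t_0}(\varphi),
\]
and the right-hand side is finite because $\partial_t^+\varphi(e^{t_0}\zeta)\le\varphi(e^{-2}\zeta)-\varphi(e^{-1}\zeta)$, which is bounded uniformly in $\zeta$ by the local boundedness of $\varphi$ off the origin. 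Since $\log|z|-t_0<0$, this rearranges to $\varphi(z)\ge M_{t_0}(\varphi)\log|z|+O(1)$. Now choose $t_0$ so small that $M_{t_0}(\varphi)\le\lambda_0$ for a prescribed $\lambda_0>\lambda_\varphi(0)$ (possible precisely because $\lambda_\varphi(0)=\inf_t M_t(\varphi)$), giving $\varphi\ge\lambda_0\log|z|+O(1)$, hence $lt(\varphi,0)\le\lambda_0$. Apply Theorem~\ref{thm:estimate} and let $\lambda_0\downarrow\lambda_\varphi(0)$. No blow-up, no Siu, and in particular no claim that $lt(\varphi,0)$ equals the worst directional Lelong number: only the uniform slope bound at a single fixed scale is needed. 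I would replace your step~(2) (and the appeal to Siu's theorem in step~(1)) with this convexity argument.
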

For a plurisubharmonic function which is uniformly directional Lipschitz continuous near the origin, the directional Lipschitz constant is denoted by $\kappa_{\varphi}(0)$ (see \S \ref{def:uniform Lip} for definition).
The following results strengthens \cite[Theorem 1.1]{HLX24}.

\begin{cor}\label{cor:uniform Lip}
Suppose $\varphi\in\mbox{PSH}(\mathbb{B}^n)$ is uniformly directional Lipschitz continuous near the origin,
then for any $\gamma>\kappa_{\varphi}(0)$ we have $\varphi\geq\gamma\log|z|+O(1)$ near the origin.
Moreover the following estimate for residue mass holds:
\begin{align*}
\nu_n(\varphi)\leq \nu_1(\varphi)\kappa_{\varphi}(0)^{n-1}.
\end{align*}
\end{cor}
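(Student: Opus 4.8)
The plan is to deduce the corollary from Theorem \ref{thm:estimate} by using uniform directional Lipschitz continuity of $\varphi$ to produce, at one stroke, the stated logarithmic lower bound and the inequality $lt(\varphi,0)\le\kappa_\varphi(0)$. The first assertion will be proved for all $n$; for the mass estimate we may assume $n\ge2$, the case $n=1$ being the trivial identity $\nu_1(\varphi)\le\nu_1(\varphi)$.

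First I would unwind the definition of uniform directional Lipschitz continuity (\S\ref{def:uniform Lip}): for every $\gamma>\kappa_\varphi(0)$ it furnishes a radius $r_0>0$ such that $\varphi$ is continuous on the punctured ball $\mathbb{B}^n_{r_0}\setminus\{0\}$ and, restricted to each ray issuing from the origin, is $\gamma$-Lipschitz in the variable $\log|z|$. Fixing such $\gamma$ and $r_0$, then for $0<|z|<r_0/2$ I set $a=z/|z|$ and compare $\varphi$ at $z$ with its value at $\tfrac{r_0}{2}a$ along the connecting radial segment:
\begin{align*}
\varphi(z)\ \ge\ \varphi\big(\tfrac{r_0}{2}a\big)-\gamma\big(\log\tfrac{r_0}{2}-\log|z|\big)\ \ge\ \gamma\log|z|+\Big(\inf_{|w|=r_0/2}\varphi(w)-\gamma\log\tfrac{r_0}{2}\Big),
\end{align*}
and the infimum is finite because $\varphi$ is continuous on the compact sphere $\{|w|=r_0/2\}$. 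Hence $\varphi\ge\gamma\log|z|+O(1)$ near the origin, which is the first assertion.

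Next, since $\log|z|<0$ near $0$, dividing the bound $\varphi(z)\ge\gamma\log|z|+C$ by $\log|z|$ reverses it and gives $\frac{\varphi(z)}{\log|z|}\le\gamma+\frac{C}{\log|z|}$, whence $F(r,0)\le\gamma+\frac{C}{\log r}$ for small $r$ and therefore $lt(\varphi,0)=\liminf_{r\to0}F(r,0)\le\gamma$. As $\gamma>\kappa_\varphi(0)$ was arbitrary, $lt(\varphi,0)\le\kappa_\varphi(0)<\infty$, so $\varphi$ lies in the Cegrell class of a neighborhood of $0$ and Theorem \ref{thm:estimate} applies. Taking there $\beta_1=1$ and $\beta_2=\cdots=\beta_{n-1}=0$ yields
\begin{align*}
\nu_n(\varphi)\ \le\ \nu_1(\varphi)\,(lt(\varphi,0))^{n-1}\ \le\ \nu_1(\varphi)\,\kappa_\varphi(0)^{n-1},
\end{align*}
using $0\le lt(\varphi,0)\le\kappa_\varphi(0)$, $\nu_1(\varphi)\ge0$, and $n\ge2$ (so the bound persists even when $lt(\varphi,0)=0$). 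This is the claimed estimate.

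The one delicate point is the passage from the definition to the logarithmic lower bound. One must use that a function which is directional Lipschitz continuous near $0$ is, in particular, genuinely continuous on a punctured neighborhood of $0$, so that the reference values $\varphi(\tfrac{r_0}{2}a)$ are finite — a psh function being otherwise liable to equal $-\infty$ on a pluripolar subset of a sphere, which would break the pointwise estimate — and one must carry the admissible radial slope down to any value exceeding $\kappa_\varphi(0)$, which is exactly why the first assertion is stated with the strict inequality $\gamma>\kappa_\varphi(0)$. After that, the corollary is a direct substitution into Theorem \ref{thm:estimate}.
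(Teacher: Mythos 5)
You follow the same overall route as the paper: derive $\varphi\ge\gamma\log|z|+O(1)$ near $0$ for every $\gamma>\kappa_\varphi(0)$, deduce $lt(\varphi,0)\le\kappa_\varphi(0)$, and then substitute $\beta_1=1$, $\beta_2=\cdots=\beta_{n-1}=0$ into Theorem \ref{thm:estimate}. The last two steps are fine, and your explicit verification that $\varphi\ge\gamma\log|z|+C$ forces $F(r,0)\le\gamma+\frac{C}{\log r}$ and hence $lt(\varphi,0)\le\gamma$ is a welcome spelling-out of something the paper leaves tacit.

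The gap is in the first step. You write that Definition \ref{def:uniform Lip} ``furnishes'' continuity of $\varphi$ on a punctured ball and a $\gamma$-Lipschitz bound for $t\mapsto\varphi(e^t\xi)$ \emph{along every ray} $\xi$, and then you integrate along the radial segment from $z$ to $\tfrac{r_0}{2}z/|z|$. But the definition only says $r\varphi_r\in L^\infty(\overline{B_R})$; since $\varphi$ is merely in $W^{1,1+\varepsilon}_{\mathrm{loc}}$, this is an essential-supremum statement about the Sobolev derivative and gives absolute continuity with a bounded derivative on \emph{almost every} ray, not a genuine pointwise Lipschitz bound on each radial restriction, nor finiteness of $\varphi$ on the whole sphere $\{|w|=r_0/2\}$. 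You flag exactly this as ``the one delicate point'' and assert that ``one must use'' continuity, but you never supply an argument producing it, and it is not automatic. This is precisely the content of the lemma the paper places before the corollary, namely $L_t(\varphi_\varepsilon)\le L_{t+1}(\varphi)+n\varepsilon\|D\varphi\|_{L^1(B_R)}$ for the mollifications $\varphi_\varepsilon=\varphi*\rho_\varepsilon$: one integrates $\partial_t\varphi_\varepsilon(e^t\xi)$ along a fixed ray (legitimate, since $\varphi_\varepsilon$ is smooth), gets $\varphi_\varepsilon(e^t\xi)\ge\varphi_\varepsilon(e^{t_0}\xi)+\bigl(L_{t_0+1}(\varphi)+O(\varepsilon)\bigr)(t-t_0)$ uniformly, and only then passes to the decreasing pointwise limit $\varphi_\varepsilon\searrow\varphi$ to obtain the estimate for $\varphi$ itself. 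Without this regularization step your radial comparison is unjustified, so your proof has a genuine gap at the one place where the paper does the real work.
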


The remaining of the article is arranged as follows.
In \S \ref{sec:max isolated sing}, we prove Theorem \ref{thm:max final version} and Theorem \ref{thm:n-1} in the special case that $\varphi$ and $\psi$ have isolated singularity at the origin.
The proofs of the Theorems in their full generality follow essentially the same idea, but are technically much more involved,
hence we set them as an appendix.
The aim of \S \ref{sec:estimate} is to prove Theorem \ref{thm:estimate} and show that the estimate given by it is optimal by a series of examples.
We discuss the relations between our main results and other known results related to the zero mass conjecture and show Corollaries \ref{cor:S1 invariant} and  \ref{cor:uniform Lip}
in \S \ref{sec:zero mass conjecture}.

\subsection*{Acknowledgements}
The fifth author would like to thank Prof. Long Li for his invited talks about his results on the zero mass conjecture at Zhou's seminar.

\section{Vanishing of residual mixed  Monge-Amp\`ere measures}\label{sec:max isolated sing}
In this section, we  give the proof of Theorem \ref{thm:max final version} and Theorem \ref{thm:n-1} in the case that $\varphi$ and $\psi$ have isolated singularity.
The proofs of the Theorems in their full generality follow essentially the same idea, but are technically much more involved, and will be given in the Appendix.

\begin{thm}\label{thm:max}
Suppose $\varphi\in\mbox{PSH}(\mathbb{B}^n)\cap L^{\infty}_{\mathrm{loc}}(\mathbb{B}^n\backslash\{0\}),\psi\in \mbox{PSH}(\mathbb{B}^m)\cap L^{\infty}_{\mathrm{loc}}(\mathbb{B}^m\backslash\{0\})$.
If $\nu_1(\varphi)=0$, then for $A\in\mathrm{Hom}(\mc^n,\mc^m)\cong \mathbb{C}^{nm}$ outside a pluripolar set, it holds that
\begin{align*}
\nu_n(\max\{\varphi,A^*\psi\}) =0.
\end{align*}
\end{thm}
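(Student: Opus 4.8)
The plan is to reduce the statement, by ``enlarging the dimension'', to the vanishing of a single higher Lelong number, to read off the conclusion for a generic $A$ by a Crofton slicing, and finally to promote ``generic $A$'' to ``$A$ outside a pluripolar set''. I would first shrink $r<1$ so that $\varphi<0$ on $\mathbb{B}^n_r$ and $\psi<0$ on $\mathbb{B}^m_r$, and set $\Phi(z,w):=\max\{\varphi(z),\psi(w)\}$ on $\mathbb{B}^n_r\times\mathbb{B}^m_r\subset\mc^{n+m}$. Near any point of $(\mathbb{B}^n_r\times\mathbb{B}^m_r)\setminus\{0\}$ at least one of the two arguments is finite-valued and locally bounded below while the other is bounded above, so $\Phi$ is plurisubharmonic, negative, and locally bounded away from the origin; hence $\Phi$ has an isolated singularity at $0$ and $\nu_j(\Phi)$ is defined for $1\le j\le n+m$. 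Since restriction to a linear subspace through the origin does not decrease the Lelong number, and since $\Phi|_{\{w=0\}}=\max\{\varphi,\psi(0)\}\ge\varphi$ has Lelong number at most $\nu_1(\varphi)$,
\[
\nu_1(\Phi)\le\nu_1\big(\Phi|_{\{w=0\}}\big)\le\nu_1(\varphi)=0 .
\]
By the Teissier-type inequality for mixed residual Monge-Amp\`ere measures (Lemma \ref{lem:mix}), the vanishing of $\nu_1(\Phi)$ forces
\[
\nu_n(\Phi)=\int_{\{0\}}(dd^c\Phi)^n\wedge(dd^c\log|(z,w)|)^{m}=0 .
\]

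\emph{Crofton slicing and reparametrization.} By the Crofton formula (Lemma \ref{lem:Crofton}), $(dd^c\log|(z,w)|)^{m}$ is the average $\int_{\mathbb{G}}[P]\,d\mu(P)$ over the Grassmannian $\mathbb{G}=\mathbb{G}(n,n+m)$ of linear $n$-planes $P\subset\mc^{n+m}$ through the origin, $\mu$ the invariant probability measure. Since $(dd^c\Phi)^n$ is positive, Tonelli's theorem gives
\[
0=\nu_n(\Phi)=\int_{\mathbb{G}}\Big(\int_{\{0\}}(dd^c\Phi)^n\wedge[P]\Big)\,d\mu(P),
\]
so $\int_{\{0\}}(dd^c\Phi)^n\wedge[P]=0$ for $\mu$-a.e.\ $P$; and for $P$ outside a further exceptional set of measure zero one has $(dd^c\Phi)^n\wedge[P]=\iota_{P\ast}\big((dd^c\Phi|_P)^n\big)$, hence $\int_{\{0\}}(dd^c\Phi)^n\wedge[P]=\nu_n(\Phi|_P)$ there. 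On the Zariski-open subset of $\mathbb{G}$ of $n$-planes projecting isomorphically onto the first factor, $P=P_A:=\{(z,Az):z\in\mc^n\}$ for a unique $A\in\mathrm{Hom}(\mc^n,\mc^m)$, the correspondence $A\mapsto P_A$ is a biholomorphism onto this open set, and $\Phi|_{P_A}(z)=\max\{\varphi(z),\psi(Az)\}=\max\{\varphi,A^{*}\psi\}(z)$. Transporting the previous line through this biholomorphism yields $\nu_n(\max\{\varphi,A^{*}\psi\})=0$ for $A$ outside a Lebesgue-null subset of $\mc^{nm}$; this already proves the theorem with ``pluripolar'' weakened to ``Lebesgue-null''.

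\emph{From negligible to pluripolar --- the hard part.} The preceding step only produces a null exceptional set, because ``$\int f\,d\mu=0\Rightarrow f\equiv 0$'' holds only $\mu$-a.e.; I expect the upgrade to pluripolarity to be the main obstacle. The approach I would take is to treat the whole family at once. The function $\widetilde\Phi(z,A):=\max\{\varphi(z),\psi(Az)\}$ is plurisubharmonic in $(z,A)$ on $\mathbb{B}^n_\delta\times\mathbb{B}^{nm}_R$ for $\delta R$ small (it is the maximum of $\varphi(z)$ and the pullback of $\psi$ by the holomorphic map $(z,A)\mapsto Az$), and its singular locus is exactly $\{z=0\}$, a submanifold of codimension $n$ --- precisely the borderline dimension for which $T:=(dd^c\widetilde\Phi)^n$ is a well-defined closed positive $(n,n)$-current. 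By Siu's semicontinuity theorem \cite{Siu74} the sets $\{x:\nu(T,x)\ge c\}$ are analytic; their traces on $\{z=0\}\cong\mathbb{B}^{nm}_R$ are analytic subsets there, and are proper --- hence pluripolar --- because $\nu(T,(0,A))\le\nu_n(\widetilde\Phi(\cdot,A))=\nu_n(\max\{\varphi,A^{*}\psi\})$ vanishes for a.e.\ $A$ by the previous step; so $\{A:\nu(T,(0,A))>0\}$ is pluripolar. It then remains to show that the slice of $T$ by $\{A=A_0\}$ equals $(dd^c\max\{\varphi,A_0^{*}\psi\})^n$ and that its atom at the origin coincides with $\nu(T,(0,A_0))$ for all $A_0$ outside a pluripolar set --- once more a ``generic slice'' statement, which I would prove by the Crofton/comparison mechanism used above. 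Combining, $\{A:\nu_n(\max\{\varphi,A^{*}\psi\})>0\}$ lies in a union of two pluripolar sets, hence is pluripolar, which is the assertion of Theorem~\ref{thm:max}. The step I expect to absorb most of the work is this last ``generic slice'' pluripolarity --- equivalently, pinning down the residual mass $\nu_n(\max\{\varphi,A^{*}\psi\})$ of the slices as a point Lelong number of one fixed current off a pluripolar set; everything upstream is the clean ``enlarge the dimension $+$ Teissier $+$ Crofton'' mechanism.
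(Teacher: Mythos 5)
Your Step~1 is correct and is exactly the paper's first step: set $\Phi(z,w)=\max\{\varphi(z),\psi(w)\}$, note $\nu_1(\Phi)=0$, invoke Lemma~\ref{lem:mix} to obtain $\int_{\{0\}}(dd^c\Phi)^n\wedge(dd^c\log|(z,w)|)^m=0$, then split that integral by Crofton's formula into an integral over $A\in\mathrm{Hom}(\mc^n,\mc^m)$ and conclude almost-everywhere vanishing. Up to there you have reproduced the paper's argument.

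Step~2 is where your proposal breaks down, and you have correctly identified the point of difficulty, but the repair you sketch does not close the gap. The essential problem is a one-sided inequality. Let $T=(dd^c\widetilde\Phi)^n$, with $\widetilde\Phi(z,A)=\max\{\varphi(z),\psi(Az)\}$. The general principle for slices of closed positive currents is that restriction can only \emph{increase} Lelong numbers: $\nu(T,(0,A_0))\le \nu_n\big(\max\{\varphi,A_0^*\psi\}\big)$ for all $A_0$ where the slice exists, with equality for $A_0$ outside a Lebesgue-null set. Siu's theorem applied to $T$ then shows that $\{A_0:\nu(T,(0,A_0))>0\}$ is analytic (hence pluripolar), and indeed from Step~1 it has measure zero, so it is a proper analytic subset of the $A$-slice. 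But this controls the \emph{wrong} quantity: to conclude the theorem you need an upper bound on the slice mass $\nu_n(\max\{\varphi,A_0^*\psi\})$ by $\nu(T,(0,A_0))$ outside a pluripolar set, i.e.\ the reverse inequality, and that is not available from general slicing theory or from Siu's theorem. When you say you would ``prove by the Crofton/comparison mechanism used above'' that the slice mass coincides with $\nu(T,(0,A_0))$ off a pluripolar set, that mechanism only delivers a measure-zero exceptional set, which is exactly the conclusion you are trying to improve; the argument is circular. (There is also a subsidiary issue: the identification of the slice of $(dd^c\widetilde\Phi)^n$ by $\{A=A_0\}$ with $(dd^c\max\{\varphi,A_0^*\psi\})^n$ is clean for locally bounded slices but needs care for the exceptional $A_0$ you most want to understand.)

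The paper's actual Step~2 avoids Siu's theorem entirely. It fixes a smooth truncation $\chi$ with $\chi(t)=t$ near $-\infty$ and, with $\Psi(z,w)=\max\{\log|z|,\operatorname{Re}w\}$, constructs the function
\begin{align*}
V(A,w)\;=\;\int_{\mathbb{B}^n}\chi(\Psi(z,w))\,\big(dd^c_z\max\{\varphi(z),\psi(Az)\}\big)^n
\end{align*}
on $\mathbb{B}^{nm}\times H$ (with $H$ a left half-plane), proves by a Stokes/approximation argument that $V$ is plurisubharmonic and independent of $\operatorname{Im}w$, and applies Kiselman's minimal principle to get, for each $r>0$, a plurisubharmonic function $U^r(A)=\inf_{t<-2}\{V(A,t)-rt\}$ on $\mathbb{B}^{nm}$. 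Then $U^r(A)=-\infty$ whenever the slice residual mass exceeds $r$, and $U^r(A)>-\infty$ whenever it is less than $r$; since the slice mass vanishes a.e.\ (Step~1), $U^r\not\equiv-\infty$. Hence the bad set $\{A:\nu_n(\max\{\varphi,A^*\psi\})>0\}$ is an increasing union over $r\downarrow 0$ of polar sets $\{U^r=-\infty\}$, and therefore pluripolar. In short: the paper manufactures a psh potential on parameter space whose polar set directly captures the exceptional $A$'s, rather than trying to export the conclusion from an $(n,n)$-current via Siu's analyticity. Your proposal needs some mechanism of this type, or a new argument supplying the reverse slice inequality off a pluripolar set; as written, Step~2 is a genuine gap.
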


For the proof of Theorem \ref{thm:max}, we need two lemmas.
The first lemma is a Teissier type inequality associated to point masses with respect to  mixed Monge-Amp\`ere measures,
which is studied in \cite[Lemma 5.4]{Ce04}(see also \cite[Lemma 2.1]{DH14},\cite[Theorem 1.4]{KR21}, and a global version \cite{D}).

\begin{lem}\label{lem:mix}
Suppose $\varphi_1,\varphi_2 \in \mathcal{E}(\mathbb{B}^n)$ (see Definition \ref{def:Cegrell class} for the definition of $\mathcal{E}(\mathbb{B}^n)$),
for $j=1,\cdots,n-1$, it holds that
\begin{align*}
     &\Big(\int_{\{0\}}(dd^c\varphi_1)^{j}\wedge(dd^c\varphi_2)^{n-j}\Big)^2 \\
\leq &\Big(\int_{\{0\}}(dd^c\varphi_1)^{j-1}\wedge(dd^c\varphi_2)^{n-j+1} \Big)
\Big( \int_{\{0\}} (dd^c\varphi_1)^{j+1}\wedge(dd^c\varphi_2)^{n-j-1}\Big).
\end{align*}
In particular, if $\nu_{1}(\varphi_1)=0$, it holds that $\nu_j(\varphi_1)=0$ for $j=1,\cdots,n-1$.
\end{lem}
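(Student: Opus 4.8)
Here is the plan I would follow.

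The plan is to read the inequality as a localized Hodge-index (Teissier-type) estimate: I would reduce it to a single Cauchy--Schwarz inequality and concentrate the latter at the origin by means of a plurisubharmonic cut-off. First I would isolate its essential content. Fix $j\in\{1,\dots,n-1\}$ and regard $T:=(dd^c\varphi_1)^{j-1}\wedge(dd^c\varphi_2)^{n-j-1}$ as a closed positive current of bidimension $(2,2)$ on $\mathbb{B}^n$ (well defined because $\varphi_1,\varphi_2\in\mathcal{E}(\mathbb{B}^n)$). The three quantities in the statement are then $\int_{\{0\}}(dd^c\varphi_2)^{2}\wedge T$, $\int_{\{0\}}dd^c\varphi_1\wedge dd^c\varphi_2\wedge T$ and $\int_{\{0\}}(dd^c\varphi_1)^{2}\wedge T$, so the lemma is the special case $u=\varphi_1$, $v=\varphi_2$ of the ``bidimension two'' inequality
\[
\Big(\int_{\{0\}}dd^c u\wedge dd^c v\wedge T\Big)^{2}\le\Big(\int_{\{0\}}(dd^c u)^{2}\wedge T\Big)\Big(\int_{\{0\}}(dd^c v)^{2}\wedge T\Big),
\]
to be established for $u,v$ in the Cegrell class and $T$ a closed positive current of the above type.

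To prove this I would first localize. Bedford--Taylor products being local operators, all the point masses at $0$ above depend only on the germs of $\varphi_1,\varphi_2$ at $0$; hence, fixing a small $r_0>0$ and replacing each $\varphi_i$ by a plurisubharmonic function that agrees with it on $\mathbb{B}^n_{r_0}$ but lies in the Cegrell class $\mathcal{F}(\mathbb{B}^n_{2r_0})$ (for instance the regularized supremum of all plurisubharmonic $w\le0$ on $\mathbb{B}^n_{2r_0}$ with $w\le\varphi_i$ on $\mathbb{B}^n_{r_0}$), we may assume that $u$, $v$, and the functions entering $T$ all lie in $\mathcal{F}(\Omega)$, where $\Omega:=\mathbb{B}^n_{2r_0}$ is hyperconvex. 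For $0<r<r_0$ let $\rho_r\in\mathcal{E}_0(\Omega)$ be the relative extremal function of $\overline{\mathbb{B}^n_r}$ in $\Omega$; then $-1\le\rho_r\le0$, $\rho_r\equiv-1$ on $\mathbb{B}^n_r$, and, as $r\downarrow0$, the $\rho_r$ increase while $-\rho_r$ decreases pointwise on $\Omega$ to $\mathbf{1}_{\{0\}}$; in particular $\int_\Omega(-\rho_r)\,d\mu\downarrow\mu(\{0\})$ by monotone convergence for every finite positive measure $\mu$ on $\Omega$. Using $-\rho_r$ as a plurisubharmonic cut-off near $0$, the integration-by-parts and Cauchy--Schwarz argument of \cite[Lemma 5.4]{Ce04} (carried out with no boundary terms, since $\rho_r\in\mathcal{E}_0(\Omega)$ while $u$, $v$ and the functions entering $T$ belong to $\mathcal{F}(\Omega)$) gives, for each such $r$,
\begin{multline*}
\Big(\int_\Omega(-\rho_r)\,dd^c u\wedge dd^c v\wedge T\Big)^{2}\\
\le\Big(\int_\Omega(-\rho_r)(dd^c u)^{2}\wedge T\Big)\Big(\int_\Omega(-\rho_r)(dd^c v)^{2}\wedge T\Big).
\end{multline*}
Letting $r\downarrow0$ and applying the monotone convergence above to the three mixed Monge-Amp\`ere measures $dd^c u\wedge dd^c v\wedge T$, $(dd^c u)^{2}\wedge T$, $(dd^c v)^{2}\wedge T$ (each of finite total mass on $\Omega$, as $u,v$ and the defining functions of $T$ are in $\mathcal{F}(\Omega)$) yields the bidimension-two inequality, hence the lemma.

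The final assertion follows by applying the lemma with $\varphi_2=\log|z|\in\mathcal{F}(\mathbb{B}^n)$: the displayed inequality becomes $\nu_j(\varphi_1)^{2}\le\nu_{j-1}(\varphi_1)\,\nu_{j+1}(\varphi_1)$ for $1\le j\le n-1$, with the convention $\nu_0(\varphi_1):=\int_{\{0\}}(dd^c\log|z|)^{n}=1$. If $\nu_1(\varphi_1)=0$, then $\nu_2(\varphi_1)^{2}\le\nu_1(\varphi_1)\,\nu_3(\varphi_1)=0$ forces $\nu_2(\varphi_1)=0$, and iterating upward in $j$ along this chain gives $\nu_j(\varphi_1)=0$ for every $1\le j\le n-1$ (the chain stops at $n-1$ and does not reach $\nu_n(\varphi_1)$).

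The step I expect to be the main obstacle is the localization together with the passage to the limit — more precisely, making the integration by parts and the extraction of the point mass at $0$ rigorous for Cegrell-class functions with arbitrary, not necessarily isolated, singularities. This needs the standard but delicate approximation machinery of the Cegrell class: decreasing $\mathcal{E}_0$-approximations, weak continuity of mixed Monge-Amp\`ere measures along such sequences, and semicontinuity of the masses of open and closed balls under weak convergence. It is exactly this technical layer that the authors postpone to an appendix for their main theorems; for functions with isolated singularity everything above is routine.
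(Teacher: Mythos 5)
The paper does not supply its own proof of this lemma; it is quoted from the literature (the references given are \cite{Ce04}, \cite{DH14}, \cite{KR21}, and \cite{D}), so there is no ``paper's argument'' to compare against. Your plan correctly reconstructs the standard argument behind those citations: fix the current $T$ of bidimension $(2,2)$, read the statement as a localized Cauchy--Schwarz/Hodge-index inequality, localize the germs into $\mathcal{F}$ of a small hyperconvex ball, apply Cegrell's weighted Cauchy--Schwarz with $-\rho_r\in -\mathcal{E}_0$ as the weight, and let $r\to 0$; the iterative log-convexity chain $\nu_j^2\le\nu_{j-1}\nu_{j+1}$ with $\varphi_2=\log|z|$ and $\nu_0=1$, stopping at $j=n-1$, is exactly the intended ``in particular'' deduction. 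One step you should spell out: the convergence $\int_\Omega(-\rho_r)\,d\mu\downarrow\mu(\{0\})$ needs $-\rho_r$ to decrease pointwise (not merely quasi-everywhere) to $\mathds{1}_{\{0\}}$, which is not automatic for shrinking compacts with pluripolar intersection and matters here because mixed Monge--Amp\`ere measures of Cegrell-class functions can charge pluripolar sets; in your situation it does hold, since the explicit competitors $\max\{\log(|z|/R)/\log(R/r),-1\}$ give a pointwise lower bound showing $\rho_r(z)\to 0$ for every $z\in\Omega\setminus\{0\}$ (equivalently, one may simply use these explicit cut-offs in place of the relative extremal functions).
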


The second lemma we need is known as the Crofton's formula.

\begin{lem}\cite[Chapter III, Cor(7.11)]{Dem}\label{lem:Crofton}
Let $dv$ be the unique $U(n)$-invariant measure with total mass $1$ on the Grassmannian $G(p,n)$ of $p$-dimensional subspaces of $\mc^n$, then we have
\begin{align*}
(dd^c\log|z|)^{n-p}=\int_{S\in G(p,n)}[S]dv(S),
\end{align*}
where for a $p$-dimensional linear subspace $S$ in $\mc^n$, $[S]$ represents the current on $\mc^n$ associated to $S$.
\end{lem}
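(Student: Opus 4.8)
The plan is to prove the Crofton identity by an \emph{invariance-plus-uniqueness} argument: both sides are closed positive currents of bidegree $(n-p,n-p)$ on $\mc^n$ that are invariant under the $U(n)$-action and under the dilations $z\mapsto\lambda z$, $\lambda\in\mathbb{C}^*$, and I will show that the space of such currents is one-dimensional. Write $q=n-p$, $G=G(p,n)$, $T_{\mathrm l}=(dd^c\log|z|)^{q}$ and $T_{\mathrm r}=\int_{S\in G}[S]\,dv(S)$. We may assume $1\le p\le n-1$, since the case $p=0$ is the Bedford--Taylor identity $(dd^c\log|z|)^n=\delta_0$ and the case $p=n$ is trivial. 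First I would check both currents are admissible. For $T_{\mathrm l}$ this is Bedford--Taylor theory applied to $\log|z|$, locally bounded on $\mc^n\setminus\{0\}$. For $T_{\mathrm r}$, each $[S]$ is a closed positive $(q,q)$-current, and the average against the probability measure $dv$ is again closed and positive; realizing $T_{\mathrm r}=(p_1)_*\,p_2^{*}\,dv$ via the incidence variety $\mathcal I=\{(z,S)\in\mc^n\times G:\ z\in S\}$ with its projections $p_1,p_2$, one sees $T_{\mathrm r}$ has locally finite mass near $0$ because $\int_{\mathbb{B}^n_r}T_{\mathrm r}\wedge(dd^c|z|^2)^{p}=\int_G\mathrm{vol}(S\cap\mathbb{B}^n_r)\,dv(S)=\mathrm{vol}(\mathbb{B}^p_r)$. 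The $U(n)$-invariance of both currents follows from that of $\log|z|$ and of $dv$; the dilation-invariance follows because $\log|\lambda z|-\log|z|$ is constant (so $dd^c\log|\lambda z|=dd^c\log|z|$) and because each linear subspace $S$ is a cone.

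Next, the uniqueness step. Since $q\le n-1$, each of $T_{\mathrm l}$ and $T_{\mathrm r}$ is, by the extension theorem for closed positive currents, the trivial extension of its restriction to $\mc^n\setminus\{0\}$ (equivalently, it has no residual mass at the point $0$); and since $U(n)$ acts transitively on each sphere $\{|z|=r\}$, that restriction is represented by a smooth $(q,q)$-form (for $T_{\mathrm r}$ this is also visible directly: $p_1\colon\mathcal I\to\mc^n$ is a submersion over $\mc^n\setminus\{0\}$ with fibre $\{S:\ z\in S\}\cong G(p-1,n-1)$). Now if $T$ is \emph{any} $U(n)$- and $\mathbb{C}^*$-invariant closed positive $(q,q)$-current on $\mc^n$, its smooth restriction to $\mc^n\setminus\{0\}$ is determined by its value at the single point $z_0=(1,0,\dots,0)$, which is a $U(n-1)$-invariant $(q,q)$-covector; a short linear-algebra computation — equivalently the Fubini--Study computation on $\p^{n-1}$, using that $dd^c\log|z|$ is the pullback of $\omega_{\mathrm{FS}}$ under $\mc^n\setminus\{0\}\to\p^{n-1}$ — identifies it with a nonnegative multiple $c$ of $(dd^c\log|z|)^{q}_{z_0}$. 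Homogeneity and $U(n)$-invariance then force $T=c\,T_{\mathrm l}$ on $\mc^n\setminus\{0\}$, hence (by trivial extension) $T=c\,T_{\mathrm l}$ on all of $\mc^n$. Applied to $T=T_{\mathrm r}$ this gives $T_{\mathrm r}=c\,T_{\mathrm l}$ for some constant $c\ge0$.

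It remains to pin down $c$, for which I would compare the Lelong numbers of the two sides at $0$. Since the Lelong number is additive and commutes with integration over $G$, $\nu(T_{\mathrm r},0)=\int_G\nu([S],0)\,dv(S)=\int_G 1\,dv(S)=1$, each $S$ being a smooth submanifold through $0$ of multiplicity $1$; on the other hand $\nu(T_{\mathrm l},0)=\int_{\{0\}}(dd^c\log|z|)^{q}\wedge(dd^c\log|z|)^{n-q}=\int_{\{0\}}(dd^c\log|z|)^{n}=1$ by Bedford--Taylor. Hence $c=1$ and $T_{\mathrm r}=T_{\mathrm l}$, which is the assertion.

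The main obstacle is the uniqueness step of the second paragraph: establishing that a $U(n)$- and $\mathbb{C}^*$-invariant closed positive $(q,q)$-current on $\mc^n$ is a constant multiple of $(dd^c\log|z|)^{q}$. The behaviour at $0$ is handled cleanly by the extension theorem for closed positive currents, and the finiteness bookkeeping is routine, but identifying the smooth invariant form off the origin requires either the representation-theoretic classification of $U(n)$-invariant $(q,q)$-forms on $\mc^n\setminus\{0\}$ or — which I would prefer — a careful explicit Fubini--Study computation at one point together with the homogeneity argument. Once that identification is in place, Steps 1 and 4 are short.
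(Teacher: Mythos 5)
The paper gives no proof of this lemma at all — it is quoted verbatim from Demailly's book — so your attempt can only be judged on its own terms. Your overall scheme (both sides are closed positive $(q,q)$-currents on $\mc^n$, invariant under $U(n)$ and under dilations, carrying no mass at the origin, smooth away from the origin; conclude proportionality by an invariance classification and fix the constant by Lelong numbers) is viable, and the peripheral steps are fine: the local mass bound for $T_{\mathrm r}$ via the incidence variety, the triviality of the extension across $0$ (a closed positive current of bidimension $(p,p)$ with $p\ge1$ cannot charge a point), and the normalization $\nu(T_{\mathrm r},0)=\nu(T_{\mathrm l},0)=1$ are all correct.

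The genuine gap sits exactly at the step you yourself flag as the main obstacle, and your proposed resolution of it fails as stated. For $1\le q\le n-1$ the space of $U(n-1)$-invariant $(q,q)$-covectors at $z_0=(1,0,\dots,0)$ is \emph{two}-dimensional, not one-dimensional: it is spanned by $\alpha^q$ and $\alpha^{q-1}\wedge\gamma$, where $\alpha=dd^c\log|z|$ and $\gamma=d\log|z|\wedge d^c\log|z|$ (at $z_0$, $\alpha$ is proportional to $\tfrac i2\sum_{j\ge2}dz_j\wedge d\bar z_j$ and $\gamma$ to $\tfrac i2\,dz_1\wedge d\bar z_1$). Both generators are positive, and both global forms $\alpha^q$ and $\alpha^{q-1}\wedge\gamma$ are $U(n)$- and dilation-invariant on $\mc^n\setminus\{0\}$; so ``a short linear-algebra computation at one point'' plus homogeneity can only yield $T=c_1\,\alpha^q+c_2\,\alpha^{q-1}\wedge\gamma$ with constants $c_1,c_2\ge0$ — it cannot identify $T$ with a multiple of $(dd^c\log|z|)^q$, and $\alpha^{q-1}\wedge\gamma$ is an explicit invariant positive form that is not such a multiple. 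What eliminates the radial component is the hypothesis you list but never use in this step, namely $dT=0$: since $d\bigl(\alpha^{q-1}\wedge\gamma\bigr)=-\,d\log|z|\wedge\alpha^q$, and $d\log|z|\wedge\alpha^q\neq0$ for $q\le n-1$ (at $z_0$ it is proportional to $(dz_1+d\bar z_1)\wedge\bigl(\tfrac i2\sum_{j\ge2}dz_j\wedge d\bar z_j\bigr)^q\neq0$), closedness forces $c_2=0$. With this correction the uniqueness lemma you need is true and the rest of your argument (applied to $T_{\mathrm r}$, which is indeed smooth off $0$ by the submersion/fibre-integration description) goes through; without it the central classification step fails. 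Alternatively, you could bypass the classification entirely by computing the fibre integral defining $T_{\mathrm r}$ over a point $z\neq0$ explicitly and checking it equals $\alpha^q$ there, but that is precisely the computation your sketch defers.
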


We now give the proof of Theorem \ref{thm:max}.

\begin{proof}
The proof is divided into several steps.
\begin{itemize}
\item
Step 1. Theorem \ref{thm:max} holds for almost every $A\in\mc^{nm}$.
\end{itemize}

Consider a plurisubharmonic function
$$\Phi(z,w):=\max\{\varphi(z),\psi(w)\}$$
on $\mathbb{B}^n\times\mathbb{B}^m$ with isolated singularity $0$.
From the definition of Lelong number, it is obvious that $\nu_{1}(\Phi)\leq \nu_1(\varphi)=0$,
So Lemma \ref{lem:mix} implies that
\begin{align*}
\int_{\{0\}}(dd^c\Phi)^n\wedge(dd^c\log|(z,w)|)^m=0.
\end{align*}
Crofton's formula yields the following equation
\begin{align*}
 &\int_{\{0\}}(dd^c\Phi)^n\wedge(dd^c\log|(z,w)|)^m\\
=&\lim_{r\ra0}\lim_{i\ra\infty} \int_{\mc^{nm}}\chi_r(dd^c\Phi_i)^n\wedge \int_{S\in G(n,n+m)}[S]dv(S)\\
=&\lim_{r\ra0}\lim_{i\ra\infty} \int_{S\in G(n,n+m)}\Big(\int_{\mc^{nm}}\chi_r(dd^c\Phi_i)^n\wedge[S]\Big)dv(S)\\
=&\int_{S\in G(n,n+m)}\Big(\int_{\{0\}}(dd^c\Phi)^n\wedge[S]\Big)dv(S),
\end{align*}
where $\chi_r$ are smooth cut-off functions decreasing to $\mathds{1}_{\{0\}}$ and $\Phi_i=\Phi*\rho_{\varepsilon_i}$, $\{\rho_{\varepsilon_i}\}$ are standard convolution kernels.

We take basis $\{e_1,\cdots,e_{n+m}\}$ of $\mathbb{C}^{n+m}$ so that
\begin{align*}
&\mathrm{span}\{e_1,\cdots,e_n\}=\mathbb{C}^n\times\{0\}, \\
&\mathrm{span}\{e_{n+1},\cdots,e_{n+m}\}=\{0\}\times\mc^m.
\end{align*}
The neighborhood of $\mc^n \times \{0\}$ in $G(n,n+m)$ can be parametrized by $\mc^{nm}$ as follows
\begin{align*}
\mc^{nm} &\ra G(n,n+m),\\
(a_{jk}) &\mapsto \mathrm{span}\{e_{j}+\sum_{k=1}^ma_{jk}e_{n+k}:j=1,\cdots,n\}.
\end{align*}
For $S=\mathrm{span}\{e_{j}+\sum_{k=1}^ma_{jk}e_{n+k}\}$, if we set $A=(a_{jk})\in\mc^{nm}\cong \mathrm{Hom}(\mc^n,\mc^m)$, it is clear that
\begin{align*}
\int_{\{0\}}(dd^c\Phi)^n\wedge[S]=\int_{\{0\}}(dd^c_z\max\{\varphi(z),\psi(Az)\})^n.
\end{align*}
Since $dv$ is a smooth volume form on $G(n,n+m)$, we can infer that for Lebesgue almost every $A\in\mc^{nm}$,
\begin{align*}
\int_{\{0\}}(dd^c_z\max\{\varphi(z),\psi(Az)\})^n=0.
\end{align*}
\begin{itemize}
\item
Step 2. Theorem \ref{thm:max} holds for $A\in\mc^{nm}$ outside a pluripolar set.
\end{itemize}
The proof is motivated by \cite[Chapter III, p176]{Dem}.
Since the problem is local, we can work on the unit ball $\mathbb{B}^{nm}:=\{A\in\mc^{nm}:\|A\|<1\}$.
The plurisubharmonic function $(z,A)\mapsto\Phi(z,Az)=\max\{\varphi(z),\psi(Az)\}$ is well-defined on $\mathbb{B}^n\times\mathbb{B}^{nm}$.
Without loss of generality, we may assume $\varphi,\psi<-2$.
Glueing $\varphi$ with $M\log|z|$ on $\{\frac{1}{2}<|z|<1\}$, we get $\varphi^{\prime}\in\mbox{PSH}(\mathbb{B}^n)$.
$\varphi^{\prime}(z)=M\log|z|$ in a neighborhood of $\{|z|\geq r_0\}$ with $M\log r_0=-1$ for some $M,r_0>0$.
Considering the half-plane $H:=\{w\in\mc: \mathrm{Re  }w<-2\}$, on $\mathbb{B}^{nm}\times H$ we define
\begin{align*}
(A,w)\mapsto V(A,w):=-\int_{\mathrm{Re }w }^0  \chi'(t) \Big( \int_{\{z\in\mathbb{B}^n: \log|z|<t\}}(dd^c_z\Phi(z,Az))^n\Big) dt ,
\end{align*}
where $\chi\in\mathcal{C}^{\infty}(\mr;\mr)$ is non-decreasing, $\chi(t)=t$ near $\{t\leq \log r_0\}$ and $\chi(t)=0$ near $\{t\geq0\}$.
Set $\Psi(z,w):=\max\{\log|z|,\mathrm{Re}w\}$, a continuous plurisubharmonic function on $\mathbb{B}^n\times \mathbb{B}^{nm}\times H$.
Fubini theorem then yields that
\begin{align*}
V(A,w)&=-\int_{\mathbb{B}^n}\Big(\int_{\max\{\log|z|,\mathrm{Re}w\}}^0\chi'(t)dt\Big)(dd^c_z\Phi)^n\\
&=\int_{\mathbb{B}^n}\chi(\Psi)(dd^c_z\Phi)^n.
\end{align*}
We approximate $\varphi,\psi$ by $\varphi*\rho_j,\psi*\rho_j$ in a standard way.
Glueing $\varphi*\rho_j$ with $M\log|z|$, we get smooth plurisubharmonic functions $\varphi_j$, which still coincide with $M \log|z|$ in a neighborhood of $\{ |z|\geq r_0 \}$.
Set $\Phi_j:=\max_{\varepsilon_j}\{\varphi_j,\psi*\rho_j\},
\Psi_j :=\max_{ \varepsilon_j}\{\log|z|,\mathrm{Re}w\}$ and define
a sequence of smooth functions on $\mathbb{B}^{nm}\times H$:
\begin{align*}
(A,w)\mapsto V_j(A,w):=\int_{\mathbb{B}^n}\chi(\Psi_j)(dd^c_z\Phi_j)^n.
\end{align*}
We claim that:
\begin{itemize}
\item[(1)]
$V_{j}$ is plurisubhamonic on  $\mathbb{B}^{nm}\times H$,
\item[(2)]
$V_j$ decreases to $V$ on $\mathbb{B}^{nm}\times H$.
\end{itemize}

To prove $(1)$, we need to show that $dd^c_{(A,w)}V_j\geq0$.
For smooth $(nm,nm)$-form $h$ on $\mathbb{B}^{nm}\times H$ with compact support, we compute
\begin{align*}
\langle dd^c_{(A,w)}V_j,h\rangle
&=\langle V_j,dd^c_{(A,w)}h\rangle \\
&= \int_{\mathbb{B}^{nm}\times H} \Big(\int_{\mathbb{B}^n}\chi(\Psi_{j}) (dd^c_z\Phi_{j})^n \Big) \wedge  dd^c_{(A,w)}h \\
&= \int_{\mathbb{B}^n\times \mathbb{B}^{nm}\times H} \chi(\Psi_{j})(dd^c\Phi_{j})^n\wedge dd^c h \\
&=  \int_{\mathbb{B}^n\times \mathbb{B}^{nm}\times H} dd^c \chi(\Psi_{j}) \wedge (dd^c\Phi_{j})^n \wedge h.
\end{align*}
The third $``="$ is because $dd^c_{(A,w)}h=dd^ch$ does not have terms involving $dz,d\bar{z}$ and has top degree on $(A,w)$-direction, which leads to $dd^c_z$ can be replaced by total differential $dd^c_{(z,A,w)}=dd^c$.
The last $``="$ is because $\chi(\Psi_{j})(dd^c\Phi_{j})^n\wedge h$ has compact support on $\mathbb{B}^n\times\mathbb{B}^{nm}\times H$, Stokes' formula then gives the desired equation.
Notice that in a neighborhood of $\{|z|\geq r_0\} \times \mathbb{B}^{nm}\times H$,
\begin{align*}
\Phi_j=M\log|z|,\Psi_j=\log|z|,
\end{align*}
so we may infer that
\begin{align*}
&\int_{\{ |z|\geq r_0\}\times \mathbb{B}^{nm}\times H}dd^c \chi(\Psi_j) \wedge(dd^c\Phi_j)^n\wedge h\\
=&\int_{\{ |z|\geq r_0\}\times \mathbb{B}^{nm}\times H} dd^c \chi(\log|z|) \wedge(dd^cM\log|z|)^n\wedge h\\
=&\ 0.
\end{align*}
The last equality is because $dd^c \chi(\log|z|) \wedge(dd^cM\log|z|)^n\equiv0$.
Using the fact that $\chi(\Psi_j)=\Psi_j$ on $\{|z|<r_0\} \times \mathbb{B}^{nm}\times H$, we discover
\begin{align*}
\langle dd^c_{(A,w)}V_j,h\rangle=\int_{\{|z|<r_0\} \times \mathbb{B}^{nm}\times H}dd^c\Psi_j \wedge(dd^c\Phi_j)^n\wedge h
\end{align*}
and this proves that $V_j$ is plurisubharmonic on $\mathbb{B}^{nm}\times H$.

To prove $(2)$, we fix $(A,w)\in \mathbb{B}^{nm}\times H$.
First, we would like to show $V_{j_1}(A,w)\geq V_{j_2}(A,w)$, $j_1\leq j_2$.
Notice that in a neighborhood of $\{|z|\geq r_0\}$, $\chi(\Psi_{j_1})(dd^c_z\Phi_j)^n=\chi(\Psi_{j_2})(dd^c_z\Phi)^n$.
Moreover, in $\{|z|<r_0\}$, we have $\chi(\Psi_{j_1})=\Psi_{j_1}$, $\chi(\Psi_{j_2})=\Psi_{j_2}$.
To get the monotonicity, it is sufficient to prove that: $\int_{\{|z|<r_0\}}\Psi_{j_1}(dd^c_z\Phi_{j_1})^n\geq \int_{\{|z|<r_0\}}\Psi_{j_2}(dd^c_z\Phi_{j_2})^n$.
This is done by $n$ integration by parts.
More precisely, we compute that
\begin{align*}
&\int_{\{|z|<r_0\}}\Psi_{j_1}(dd^c_z\Phi_{j_1})^n\\
=&\int_{\{|z|<r_0\}}\log r_0(dd^c_z\Phi_{j_1})^n+\int_{\{|z|<r_0\}}(\Psi_{j_1}-\log r_0)(dd^c_z\Phi_{j_1})^n\\
=&\int_{\{|z|<r_0\}}\log r_0(dd^c_z\Phi_{j_2})^n+\int_{\{|z|<r_0\}}(\Psi_{j_1}-\log r_0)(dd^c_z\Phi_{j_1})^n.
\end{align*}
The second ``=" is because $\Phi_{j_1}=\Phi_{j_2}$ near $\{|z|=r_0\}$.
As $\Psi_{j_1}\geq\Psi_{j_2}$, we get
\begin{align*}
&\int_{\{|z|<r_0\}}(\Psi_{j_1}-\log r_0)(dd^c_z\Phi_{j_1})^n\\
\geq&\int_{\{|z|<r_0\}}(\Psi_{j_2}-\log r_0)(dd^c_z\Phi_{j_1})^n\\
=&\int_{\{|z|<r_0\}}(\Phi_{j_1}-M\log r_0)(dd^c_z\Psi_{j_2})\wedge(dd^c_z\Phi_{j_1})^{n-1}
\end{align*}
after an integration by parts (note that $\Phi_{j_1}-M\log r_0=\Psi_{j_2}-\log r_0=0$ on the boundary $\{|z|=r_0\}$).
Repeating this arguments, we obtain that
\begin{align*}
&\int_{\{|z|<r_0\}}(\Phi_{j_1}-M\log r_0)(dd^c_z\Psi_{j_2})\wedge(dd^c_z\Phi_{j_1})^{n-1}\\
\geq &\int_{\{|z|<r_0\}}(\Phi_{j_2}-M\log r_0)(dd^c_z\Psi_{j_2})\wedge(dd^c_z\Phi_{j_1})^{n-1}\\
=&\int_{\{|z|<r_0\}} (\Phi_{j_1}-M\log r_0)(dd^c_z\Psi_{j_2})\wedge(dd^c_z\Phi_{j_2})\wedge(dd^c_z\Phi_{j_1})^{n-2}\\
&\cdots\cdots\\
\geq&\int_{\{|z|<r_0\}} (\Phi_{j_2}-M\log r_0)(dd^c_z\Psi_{j_2})\wedge(dd^c_z\Phi_{j_2})^{n-1}\\
=&\int_{\{|z|<r_0\}} (\Psi_{j_2}-\log r_0)\wedge(dd^c_z\Phi_{j_2})^{n}.
\end{align*}
This proves that $V_{j_1}(A,w)\geq V_{j_2}(A,w)$, so $V_j=\int_{\mathbb{B}^n}\chi(\Psi_j)(dd^c_z\Phi_j)^n$ is decreasing with respect to $j$.
By Bedford-Taylor-Demailly's monotone convergence theorem, $\Psi_j(dd^c_z\Phi_j)^n\ra\Psi(dd^c_z\Phi)^n$ in the weak sense on $\mathbb{B}^n$.
As a consequence, for every $(A,w)\in\mathbb{B}^{nm}\times H$,
\begin{align*}
\int_{\mathbb{B}^n}\chi(\Psi_j)(dd^c_z\Phi_j)^n \searrow \int_{\mathbb{B}^n}\chi(\Psi)(dd^c_z\Phi)^n.
\end{align*}
The proof of claim $(2)$ is complete.

For every $r\geq0$, the function
\begin{align*}
\mathbb{B}^{nm}\times H\ni(A,w)\mapsto V(A,w)-r\mathrm{Re}w
\end{align*}
is plurisubharmonic and independent of $\mathrm{Im}w$.
By Kiselman's minimal principle,
\begin{align*}
\mathbb{B}^{nm}\ni A\mapsto U^r(A):=\inf_{t<-2}\{V(A,t)-rt\}
\end{align*}
is plurisubharmonic on $\mathbb{B}^{nm}$.
It directly follows that
\begin{itemize}
\item
$U^r(A)=-\infty$, if $\int_{\{0\}}(dd^c_z\Phi(z,Az))^n>r$,
\item
$U^r(A)>-\infty$, if $\int_{\{0\}}(dd^c_z\Phi(z,Az))^n<r$.
\end{itemize}
In particular, since $\int_{\{0\}}(dd^c_z\Phi(z,Az))^n=0$ holds for almost every $A\in\mathbb{B}^{nm}$, we infer that $U^r\not\equiv-\infty$.
For $r\geq 0$, we set
\begin{align*}
E_r:=\{A\in\mathbb{B}^{nm}:\int_{\{0\}}(dd^c\Phi(z,Az))^n>r\}.
\end{align*}
The above analyses yield that
\begin{itemize}
\item
$E_r\subseteq\{U^r=-\infty\}$,
\item
$\{U^r=-\infty\}\subseteq E_s$, if $s<r$.
\end{itemize}
It follows that $E_0=\lim_{r\ra0}E_{r}=\lim_{r\ra0}\{U^r=-\infty\}$, which leads to $E_0$ is a pluripolar set,
and this completes our proof.
\end{proof}

\begin{rem}
$\int_{\{0\}}(dd^c\varphi)^n=0$ is equivalent to that,
for every $r>0$, plurisubharmonic function $U^r$ has no poles.
\end{rem}

\begin{rem}
Using the log-convexity of the sequence $\int_{\{0\}}(dd^c\varphi)^j\wedge(dd^c\max\{\varphi,A^*\psi\})^{n-j}$, $j=0,\cdots,n$,
we have the following inequality
\begin{align*}
\int_{\{0\}}(dd^c\varphi)^{n-1}\wedge (dd^c\max\{\varphi,A^*\psi\})
\leq \Big(\int_{\{0\}} dd^c\max\{\varphi,A^*\psi\}\Big)^{\frac{1}{n}}\Big(\int_{\{0\}}(dd^c\varphi)^{n}\Big)^{\frac{n-1}{n}}.
\end{align*}
Therefore we obtain the stronger conclusion:
for $A\in\mathrm{Hom}(\mc^n,\mc^m)$ outside a pluripolar set,
\begin{align*}
\int_{\{0\}}(dd^c\varphi)^{n-1}\wedge (dd^c\max\{\varphi,A^*\psi\})=0.
\end{align*}
\end{rem}


To prove Theorem \ref{thm:n-1}, we need a formula due to Z. B\l{}ocki in \cite[Theorem 4]{Blo00}.
\begin{lem} \label{lem:Blocki formula}
Suppose $\varphi,\psi\in\mbox{PSH}(\mathbb{B}^n)\cap L^{\infty}_{\mathrm{loc}}(\mathbb{B}^n)$, we have
\begin{align*}
(dd^c\max\{\varphi,\psi\})^n=dd^c\max\{\varphi,\psi\}\wedge\sum_{k=0}^{n-1}(dd^c\varphi)^k\wedge(dd^c\psi)^{n-1-k}-\sum_{k=1}^{n-1}(dd^c\varphi)^k\wedge(dd^c\psi)^{n-k}.
\end{align*}
\end{lem}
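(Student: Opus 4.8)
The plan is to prove the identity first for smooth plurisubharmonic $\varphi,\psi$ and then obtain the general case by approximation. The reduction to smooth data uses that the statement is local and that, by Bedford--Taylor theory, mixed Monge--Amp\`ere currents converge weakly along decreasing sequences of locally bounded plurisubharmonic functions: one regularizes, $\varphi_j\searrow\varphi$, $\psi_j\searrow\psi$ with $\varphi_j,\psi_j$ smooth plurisubharmonic, observes $\max\{\varphi_j,\psi_j\}\searrow\max\{\varphi,\psi\}$, and lets $j\to\infty$ in each (mixed) Monge--Amp\`ere mass on the two sides. For smooth $\varphi,\psi$, write $u=\max\{\varphi,\psi\}$, $P=dd^cu$, $Q=dd^c\varphi$, $R=dd^c\psi$. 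The first key reduction is that, by a formal telescoping induction on the number of factors, the full $n$-fold identity follows from its quadratic case
\begin{align*}
(dd^cu)^2\wedge S = dd^cu\wedge(dd^c\varphi+dd^c\psi)\wedge S - dd^c\varphi\wedge dd^c\psi\wedge S,
\end{align*}
asked for every closed positive current $S=(dd^c\varphi)^a\wedge(dd^c\psi)^b$: if $P^{m-1}\wedge T$ has already been put in the claimed form, one substitutes and applies the quadratic identity to each summand $P^2\wedge Q^kR^{m-2-k}\wedge T$, and reindexing collapses the resulting double sums to the asserted expression for $P^m\wedge T$; the case $m=1$ is trivial, and $m=n$, $T=1$ gives the Lemma. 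Observe that the quadratic identity is equivalent to $dd^c(u-\varphi)\wedge dd^c(u-\psi)\wedge S=0$.

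So everything comes down to the quadratic identity. Its two sides agree on the open sets $\{\varphi>\psi\}$ and $\{\psi>\varphi\}$, where $u$ coincides with $\varphi$, respectively $\psi$, and the identity holds by inspection; hence the difference current is carried by the crossing set $E=\{\varphi=\psi\}$, and it remains to show it has no mass there. For this I would use Demailly's regularized maximum. With $w=\varphi-\psi$, put $u_\varepsilon=\tfrac12(\varphi+\psi)+\theta_\varepsilon(w)$, where $\theta_\varepsilon\in C^\infty(\mathbb{R})$ is convex, even, nonnegative and equals $|s|/2$ for $|s|\ge 2\varepsilon$; then $u_\varepsilon\searrow u$ and $dd^cu_\varepsilon=\tfrac12(dd^c\varphi+dd^c\psi)+\theta_\varepsilon'(w)\,dd^cw+\theta_\varepsilon''(w)\,dw\wedge d^cw$. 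Since $(dw\wedge d^cw)\wedge(dw\wedge d^cw)=0$, expanding squares gives
\begin{align*}
(dd^cu_\varepsilon)^2\wedge S-\big(dd^cu_\varepsilon\wedge(dd^c\varphi+dd^c\psi)-dd^c\varphi\wedge dd^c\psi\big)\wedge S=\big(\theta_\varepsilon'(w)^2-\tfrac14\big)(dd^cw)^2\wedge S+2\,\theta_\varepsilon'(w)\theta_\varepsilon''(w)\,dd^cw\wedge dw\wedge d^cw\wedge S.
\end{align*}
Pairing with a test function $\chi$ and letting $\varepsilon\to0$: the left side tends weakly to the difference current in the quadratic identity. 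On the right side, rewriting $2\,\theta_\varepsilon'(w)\theta_\varepsilon''(w)\,dw\wedge d^cw=d\big(\theta_\varepsilon'(w)^2\big)\wedge d^cw$ and integrating by parts turns the second term into integrals of $\theta_\varepsilon'(w)^2$ against fixed continuous forms; since $\theta_\varepsilon'(w)^2\to\tfrac14$ off $\{w=0\}$ and stays bounded, this term vanishes in the limit, while the first term tends to $-\tfrac14\int\chi\,\mathbf 1_E\,(dd^cw)^2\wedge S$. Finally, in the smooth case $(dd^cw)^2\wedge S$ is a continuous $(n,n)$-form that vanishes almost everywhere on $E=\{w=0\}$ — at a Lebesgue density point of a level set a $C^2$ function is flat to second order — so this limit is $0$, proving the quadratic identity.

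The step I expect to be the main obstacle is exactly the analysis on the crossing set $E=\{\varphi=\psi\}$: $E$ need not be pluripolar, so one cannot simply invoke that mixed Monge--Amp\`ere masses put no mass on pluripolar sets; it is the combination of the regularized-maximum computation with the second-order flatness of a function along its zero set — and hence the reduction to smooth $\varphi,\psi$ — that resolves it. Once the quadratic identity is established, the telescoping delivers the $n$-fold identity for smooth $\varphi,\psi$, and the approximation described above completes the proof; alternatively, one may invoke \cite[Theorem~4]{Blo00} directly.
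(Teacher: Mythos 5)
The paper does not prove this lemma; it simply cites it to B\l{}ocki (Theorem~4 of \cite{Blo00}) and records that the identity persists for isolated singularities by a standard approximation. So your task was to supply a proof, and your overall strategy — regularize to smooth data, reduce by telescoping to the quadratic identity $dd^c(u-\varphi)\wedge dd^c(u-\psi)\wedge S=0$, and prove that quadratic identity via Demailly's regularized maximum — is sound and self-contained. The telescoping reduction is correct: if the identity holds for $m-1$ factors, substituting the quadratic identity into each term $P^2\wedge Q^kR^{m-2-k}$ and reindexing indeed produces the claimed expression for $m$ factors, and the algebra checks out. The equivalence of the quadratic identity with $dd^c(u-\varphi)\wedge dd^c(u-\psi)\wedge S=0$ and the computation of the error term
\begin{align*}
(dd^cu_\varepsilon)^2\wedge S-\bigl(dd^cu_\varepsilon\wedge(dd^c\varphi+dd^c\psi)-dd^c\varphi\wedge dd^c\psi\bigr)\wedge S
=\bigl(\theta_\varepsilon'(w)^2-\tfrac14\bigr)(dd^cw)^2\wedge S+2\theta_\varepsilon'(w)\theta_\varepsilon''(w)\,dd^cw\wedge dw\wedge d^cw\wedge S
\end{align*}
are both correct.

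There is, however, a gap in the analysis of the second error term. You claim that after writing $2\theta_\varepsilon'\theta_\varepsilon''\,dw\wedge d^cw = d(\theta_\varepsilon'(w)^2)\wedge d^cw$ and integrating by parts against $\chi$, this term vanishes in the limit ``since $\theta_\varepsilon'(w)^2\to\tfrac14$ off $\{w=0\}$ and stays bounded.'' That reasoning is not complete: the IBP produces $-\int\theta_\varepsilon'(w)^2\,d\chi\wedge d^cw\wedge dd^cw\wedge S-\int\theta_\varepsilon'(w)^2\,\chi\,(dd^cw)^2\wedge S$, and since $\theta_\varepsilon'(w)^2\to\tfrac14\mathbf{1}_{\{w\ne0\}}$, the limit is $-\tfrac14\int_{\{w\ne0\}}\bigl(d\chi\wedge d^cw\wedge dd^cw+\chi(dd^cw)^2\bigr)\wedge S$, which has no reason to be zero a priori. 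Rewriting everything in terms of $g_\varepsilon:=\theta_\varepsilon'^2-\tfrac14$ (which has the same differential and is compactly supported in $w$), the second term's limit equals $\tfrac14\int_E d\chi\wedge d^cw\wedge dd^cw\wedge S + \tfrac14\int_E \chi\,(dd^cw)^2\wedge S$. The first of these vanishes because $dw=0$ a.e.\ on $E=\{w=0\}$ (Lebesgue density), and the second equals exactly minus the limit of the first error term — so one either needs to invoke the same density-point flatness argument on the second term, or observe that the two limits simply cancel. As written, the step is wrong, though the conclusion is right.

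The cleanest repair avoids the density-point lemma entirely: note that with $G_\varepsilon$ a primitive of $g_\varepsilon$ normalized so that $\sup|G_\varepsilon|\le\varepsilon$, the whole error term is
\begin{align*}
g_\varepsilon(w)(dd^cw)^2\wedge S+g_\varepsilon'(w)\,dw\wedge d^cw\wedge dd^cw\wedge S
= dd^c\bigl(G_\varepsilon(w)\bigr)\wedge dd^cw\wedge S,
\end{align*}
and since $dd^cw\wedge S$ is closed, two integrations by parts give $\int\chi\,dd^c(G_\varepsilon(w))\wedge dd^cw\wedge S=\int G_\varepsilon(w)\,dd^c\chi\wedge dd^cw\wedge S\to 0$ uniformly as $\varepsilon\to0$. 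This replaces both pieces of your endgame and also dispenses with the (correct but unnecessary) second-order flatness lemma. With that modification, the proof is complete; otherwise, citing \cite[Theorem~4]{Blo00}, as the paper does, remains the fallback.
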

Using a standard approximation we see that the above lemma holds for $\varphi,\psi$ with isolated singularities.
\begin{proof}[Proof of Theorem \ref{thm:n-1} for $\varphi,\psi$ with isolated singularities]
We may assume that $A$ is non-degenerate,
then $A^*\psi\in\mbox{PSH}(\mathbb{B}^n)$ has isolated singularity $0$.
By Lemma \ref{lem:Blocki formula}, it holds that
\begin{align*}
&\int_{\{0\}}(dd^c\max\{\varphi,A^*\psi\})^n\\
=&\int_{\{0\}}dd^c\max\{\varphi,A^*\psi\}\wedge\sum_{k=0}^{n-1}(dd^c\varphi)^k\wedge(dd^cA^*\psi)^{n-1-k}
-\int_{\{0\}}\sum_{k=1}^{n-1}(dd^c\varphi)^k\wedge(dd^cA^*\psi)^{n-k}\\
\leq &\int_{\{0\}}dd^c\max\{\varphi,A^*\psi\}\wedge (dd^c(\varphi+A^*\psi))^{n-1}
-\sum_{k=1}^{n-1}\int_{\{0\}}(dd^c\varphi)^{k}\wedge (dd^c A^*\psi)^{n-k}.
\end{align*}
By the log-convexity of the sequence $\int_{\{0\}}(dd^c\varphi)^{j}\wedge(dd^cA^*\psi)^{n-j}$, $j=0,\cdots,n$,
we obtain that
\begin{align*}
\int_{\{0\}}dd^c\max\{\varphi,A^*\psi\}\wedge\big(dd^c(\varphi+A^*\psi)\big)^{n-1}
\leq \Big( \int_{\{0\}}(dd^c\max\{\varphi,A^*\psi\})^n \Big)^{\frac{1}{n}} \Big( \int_{\{0\}}\big(dd^c(\varphi+A^*\psi)\big)^n \Big)^{\frac{n-1}{n}}.
\end{align*}
If $\int_{\{0\}}(dd^c\max\{\varphi,A^*\psi\})^n=0$,
the above estimate implies that
\begin{align*}
\int_{\{0\}} (dd^c\varphi)\wedge(dd^cA^*\psi)^{n-1}
=\cdots=
\int_{\{0\}} (dd^c\varphi)^{n-1}\wedge (dd^cA^*\psi)=0.
\end{align*}
The proof is therefore complete.
\end{proof}

\section{A sharp estimate of the residual Monge-Amp\`ere mass}\label{sec:estimate}
In this section, we prove an upper bound for $\nu_n(\varphi)$  in terms of $\nu_1(\varphi),\cdots,\nu_{n-1}(\varphi)$ for plurisubharmonic funtions with finite log truncated threshold.
We will also show that the estimate we give is optimal in certain reasonable sense by a series of examples.

\begin{thm}(=Theorem \ref{thm:estimate})\label{thm:sharp estimate}
Suppose $\varphi \in\mbox{PSH}(\mathbb{B}^n)$ and $lt(\varphi,0)=\gamma<+\infty$.
Then for every $\beta=(\beta_1,\cdots,\beta_{n-1})$, $\beta_j\geq0$ and $\beta_1+\cdots+\beta_{n-1}=1$, it holds that
\begin{align*}
\nu_n(\varphi)
\leq \big(\frac{\nu_1(\varphi)}{\gamma}\big)^{\beta_1}\cdot\big(\frac{\nu_2(\varphi)}{\gamma^2}\big)^{\beta_2}\cdot...\cdot\big(\frac{\nu_{n-1}(\varphi)}{\gamma^{n-1}}\big)^{\beta_{n-1}}\cdot\gamma^{n}.
\end{align*}
In particular, if $\nu_{1}(\varphi)=0$, it holds that $\nu_n(\varphi)=0$.
\end{thm}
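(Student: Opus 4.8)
The plan is to push the ``enlarging dimension'' idea one more step, now keeping track of masses quantitatively. Fix $\gamma'>\gamma:=lt(\varphi,0)$; after subtracting a constant (which changes neither the $\nu_j$ nor $lt(\varphi,0)$) we may assume $\varphi<0$. The device that converts the growth hypothesis into mass control is a soft \emph{shell lemma}: since $\liminf_{r\to0}F(r,0)=\gamma<\gamma'$ there are $\gamma''\in(\gamma,\gamma')$ and a sequence $\rho_k\downarrow0$ with $\varphi(z)\ge\gamma''\log|z|$ on a neighborhood of each sphere $\{|z|=\rho_k\}$; since $\gamma''\log\rho_k-\gamma'\log\rho_k=(\gamma'-\gamma'')|\log\rho_k|\to+\infty$, the function $\max\{\varphi(z),\ \gamma'\log|z|+O(1)\}$ agrees with $\varphi(z)$ near those spheres, and therefore has the same residual Monge--Amp\`ere mass at $0$ as $\varphi$. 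Restricting $\varphi$ to a subspace $V\ni 0$ only shrinks these spheres, so the same holds for $\varphi|_V$.

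For $m\ge n$ set $N=n+m$ and $\Phi_m(z,w):=\max\{\varphi(z),\gamma'\log|w|\}\in\mathcal E(\mathbb B^n\times\mathbb B^m)$ (near $0$). I would first prove
\[
\int_{\{0\}}(dd^c\Phi_m)^j\wedge(dd^c\log|(z,w)|)^{N-j}=\nu_j(\varphi)\quad(0\le j\le n),\qquad \int_{\{0\}}(dd^c\Phi_m)^N=(\gamma')^{m}\nu_n(\varphi).
\]
For the first: by Crofton's formula (Lemma~\ref{lem:Crofton}) the left side is the average over $j$-planes $S\subset\mathbb C^N$ of the residual mass of $\Phi_m|_S$; for generic $S$, viewing $\Phi_m|_S$ on $V:=\pi_1(S)\in G(j,n)$ via the isomorphism $\pi_1|_S$, it equals $\max\{\varphi|_V,\ \gamma'\log|\cdot|+O(1)\}$, which by the shell lemma has the same residual mass as $\varphi|_V$; pushing the invariant measure on $G(j,N)$ forward to $G(j,n)$ (it stays invariant, hence is the standard one by $U(n)$-equivariance) and using Crofton again in $\mathbb C^n$ returns $\nu_j(\varphi)$. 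For the second: apply B\l{}ocki's formula (Lemma~\ref{lem:Blocki formula}) with $u=\varphi(z)$, $v=\gamma'\log|w|$; a bidegree count shows each resulting term $\int_{\{0\}}(dd^cu)^k\wedge(dd^cv)^\ell\wedge(\cdots)$ vanishes unless exactly $n$ factors carry $z$-differentials (forcing the other $m$ to carry $w$-differentials), and the surviving contributions sum to $(\gamma')^m\nu_n(\varphi)$; the model case $\varphi=\gamma''\log|z|$ checks this against the monomial formula $\prod a_i$.

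Granting these, Lemma~\ref{lem:mix} applied to $\Phi_m$ and $\log|(z,w)|$ makes $k\mapsto\log\int_{\{0\}}(dd^c\Phi_m)^k\wedge(dd^c\log|(z,w)|)^{N-k}$ convex on $\{0,\dots,N\}$. Bounding its value at $k=n$ by the chord through the points at $k=j$ and $k=N$ (the dependence on $N$ cancelling after simplification) gives $\nu_n(\varphi)\le(\gamma')^{n-j}\nu_j(\varphi)$ for $j=0,1,\dots,n-1$ and all $\gamma'>\gamma$; letting $\gamma'\downarrow\gamma$ gives $\nu_n(\varphi)/\gamma^n\le\nu_j(\varphi)/\gamma^j$ for $1\le j\le n-1$. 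Since every factor of the product in the theorem is then $\ge\nu_n(\varphi)/\gamma^n$, for $\beta_j\ge0$ with $\sum_j\beta_j=1$ we get
\[
\gamma^n\prod_{j=1}^{n-1}\Big(\tfrac{\nu_j(\varphi)}{\gamma^j}\Big)^{\beta_j}\ \ge\ \gamma^n\Big(\tfrac{\nu_n(\varphi)}{\gamma^n}\Big)^{\sum_j\beta_j}\ =\ \nu_n(\varphi),
\]
and in particular $\nu_1(\varphi)=0$ forces $\nu_n(\varphi)=0$. (As the authors indicate, one may instead keep all $m\ge1$ and let $m\to\infty$; this is what one needs when only the exponential growth rate of $\int_{\{0\}}(dd^c\Phi_m)^N$, rather than its exact value, is under control.)

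The main obstacle I expect is the pair of mass computations above --- above all the exact evaluation of the top residual mass of $\Phi_m$ --- together with running everything for an arbitrary $\varphi\in\mathrm{PSH}(\mathbb B^n)$ with only $lt(\varphi,0)<\infty$, hence possibly with non-isolated singularity: this forces one to check that $\Phi_m$ lies in the Cegrell class near $0$, to extend B\l{}ocki's formula and the restriction/Crofton manipulations to that setting, and to justify the monotone and weak limits involved. The conceptual core, the shell lemma, is elementary; everything past it is bookkeeping around the log-convexity of Lemma~\ref{lem:mix}.
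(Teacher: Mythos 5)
Your plan is correct and follows essentially the same architecture as the paper's proof: enlarge the dimension via $\Phi(z,w)=\max\{\varphi(z),\gamma'\log|w|\}$, use Crofton's formula to relate the higher Lelong numbers $e_j(\Phi)=\int_{\{0\}}(dd^c\Phi)^j\wedge(dd^c\log|(z,w)|)^{N-j}$ to those of $\varphi$, exploit the log-convexity of $j\mapsto e_j(\Phi)$ coming from the Teissier-type inequality (Lemma~\ref{lem:mix}), and feed in the ``shell'' consequence of $lt(\varphi,0)<\infty$. The one genuine divergence is arithmetic: you aim for \emph{exact} evaluations $e_j(\Phi)=\nu_j(\varphi)$ for $j\le n$ and $e_{N}(\Phi)=(\gamma')^{m}\nu_n(\varphi)$, so that the chord through $k=j$ and $k=N$ makes the $m$-dependence cancel and you obtain $\nu_n\le\nu_j\gamma^{\,n-j}$ for each fixed $m$. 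The paper instead establishes only the one-sided bounds it needs ($e_j(\Phi)\le\nu_j(\varphi)$ for $1\le j\le n-1$, $\nu_n(\varphi)\le e_n(\Phi)$, and $e_N(\Phi)\le\gamma_0^{N}$) and then lets $m\to\infty$ to recover the sharp constant. Your route is cleaner if the two exact evaluations are fully justified, and in fact the second one is close to the product formula proved in the Appendix Lemma (take $\psi=\gamma'\log|w|$). But as you acknowledge, rigorously proving these equalities for general $\varphi\in\mathcal E$ — in particular, extending B\l{}ocki's formula and the slicing/pushforward manipulations of Crofton past the locally bounded case, and checking that the graph parametrization covers a full-measure Zariski-open set of $G(j,N)$ — is precisely the bookkeeping that the paper sidesteps by working only with inequalities and a limit; the paper's route is therefore more robust even though it requires the extra passage $m\to\infty$. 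Your final reassembly step (from $\nu_n/\gamma^n\le\nu_j/\gamma^j$ to the weighted product inequality) is correct and matches the statement.
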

\begin{proof}
Fix $\gamma_0>\gamma'_0>\gamma$ and $m\geq 1$.
We consider a plurisubharmonic function $\Phi$ on $\mathbb{B}^n\times\mathbb{B}^m$ as
$$\Phi(z,w):=\max\{\varphi(z),\gamma_0\log|w|\}.$$
As in the proof of Theorem \ref{thm:max}, for $1\leq j\leq m+n-1$ we have
\begin{align*}
e_{j}(\Phi)
:&=\int_{\{0\}}(dd^c\Phi)^{j}\wedge (dd^c\log|(z,w)|)^{m+n-j}\\
&=\int_{\{0\}}(dd^c\Phi)^{j}\wedge (dd^c\log|(z,w)|)^{n-j}\wedge (dd^c\log|(z,w)|)^{m}\\
&=\int_{\{0\}}(dd^c\Phi)^{j}\wedge (dd^c\log|(z,w)|)^{n-j}\wedge\int_{G(n,n+m)}[S]dv(S)\\
&=\int_{S\in G(n,n+m)}\Big(\int_{\{0\}}(dd^c\Phi)^j\wedge(dd^c\log|(z,w)|)^{n-j}\wedge[S]\Big)dv(S).
\end{align*}

\emph{Claim.} For $j=1,\cdots,n-1$, $\nu_{j}(\varphi)\geq e_j(\Phi)$.

The proof of the claim is as follows.
We use $\mbox{Hom}(\mc^n,\mc^m)=\mc^{nm}$ to paramatrize a Zariski neighborhood of $\mc^n\times\{0\}$ in $G(n,n+m)$ as before.
If $S$ is the graph of the mapping $A:\mc^n\ra\mc^m,z\mapsto Az$, we have the following estimate
\begin{align*}
&\int_{\{0\}}(dd^c\Phi)^j\wedge(dd^c\log|(z,w)|)^{n-j}\wedge[S]\\
=&\int_{\{0\}}(dd^c\max\{\varphi(z),\log|Az|\})^j\wedge(dd^c\log|(z,Az)|)^{n-j}\\
\leq&\int_{\{0\}}(dd^c\varphi)^j\wedge (dd^c\log|z|)^{n-j},
\end{align*}
where the inequality is due to the comparison theorem for Lelong numbers.
Since $dv $ has total mass $1$, the claim is therefore proved.

Lemma \ref{lem:mix} implies that
\begin{align*}
\frac{e_2(\Phi)}{e_1(\Phi)}\leq \cdots
\leq \frac{e_{n}(\Phi)}{e_{n-1}(\Phi)}
\leq \frac{e_{n+1}(\Phi)}{e_n(\Phi)}\leq \cdots
\leq \frac{e_{n+m}(\Phi)}{e_{n+m-1}(\Phi)},
\end{align*}
which leads to
\begin{align*}
&\Big(\frac{e_{n}(\Phi)}{e_1(\Phi)}\Big)^m
=\Big( \frac{e_{n}(\Phi)}{e_{n-1}(\Phi)}\cdot...\cdot \frac{e_2(\Phi)}{e_1(\Phi)}\Big)^m
\leq \Big( \frac{e_{n+m}(\Phi)}{e_{n+m-1}(\Phi)}\cdot...\cdot\frac{e_{n+1}(\Phi)}{e_n(\Phi)}   \Big)^{n-1}
=\Big(\frac{e_{n+m}(\Phi)}{e_n(\Phi)}\Big)^{n-1},\\
&\Big(\frac{e_{n}(\Phi)}{e_2(\Phi)}\Big)^m
=\Big( \frac{e_{n}(\Phi)}{e_{n-1}(\Phi)}\cdot...\cdot \frac{e_3(\Phi)}{e_2(\Phi)}\Big)^m
\leq \Big( \frac{e_{n+m}(\Phi)}{e_{n+m-1}(\Phi)}\cdot...\cdot\frac{e_{n+1}(\Phi)}{e_n(\Phi)}   \Big)^{n-2}
=\Big(\frac{e_{n+m}(\Phi)}{e_n(\Phi)}\Big)^{n-2},\\
&\cdots\cdots\\
&\Big(\frac{e_{n}(\Phi)}{e_{n-1}(\Phi)}\Big)^m
\leq \Big(\frac{e_{n+m}(\Phi)}{e_{n+m-1}(\Phi)}\cdot...\cdot\frac{e_{n+1}(\Phi)}{e_n(\Phi)}\Big)=\Big(\frac{e_{n+m}(\Phi)}{e_n(\Phi)}\Big).
\end{align*}
Consequently, for every non-negative number $\alpha_j$, $j=1,\cdots,n-1$,
\begin{align*}
\Big(\frac{e_n(\Phi)}{e_j(\Phi)}\Big)^{m\alpha_j}\leq\Big(\frac{e_{n+m}(\Phi)}{e_n(\Phi)}\Big)^{(n-j)\alpha_j}.
\end{align*}
We multiply the above inequalities and get that
\begin{align*}
e_n(\Phi)
\leq \Big(\Pi_{j=1}^{n-1}e_j(\Phi)^{\frac{m\alpha_j}{m(\alpha_1+\cdots+\alpha_{n-1})+(n-1)\alpha_1+\cdots+\alpha_{n-1}}} \Big)
\cdot e_{n+m}(\Phi)^{\frac{(n-1)\alpha_1+\cdots+\alpha_{n-1}}{m(\alpha_1+\cdots+\alpha_{n-1})+(n-1)\alpha_1+\cdots+\alpha_{n-1}}}.
\end{align*}
We claim that $e_{n+m}(\Phi)\leq\gamma_0^{n+m}$.
Our assumption $lt(\varphi,0)=\gamma$ means there is a sequence of positive numbers $r_k\ra0$ so that $\varphi(z)\geq \gamma'_0\log|z|$ in a neighborhood $U_k$ of the sphere $\{|z|=r_k\}$ in $\mc^n$.
For $(z,w)\in U_k\times\{|w|<r_k\}$,
\begin{align*}
\Phi(z,w)\geq\max\{\gamma_0\log|z|,\gamma_0\log|w|\}\geq \gamma_0\log|(z,w)|-\gamma_0\log4,
\end{align*}
and for $(z,w)\in \{|z|<r_k\}\times\{\frac{r_k}{2}<|w|<r_k\}$,
\begin{align*}
\Phi(z,w)\geq\gamma_0\log|w|\geq \gamma_0\log r_k-\gamma_0\log2\geq \gamma_0\log|(z,w)|-\gamma_0\log4.
\end{align*}
An integration by parts yields that
\begin{align*}
\int_{\{|z|<r_k\}\times\{|w|<r_k\}}(dd^c\Phi)^{n+m}=\int_{\{|z|<r_k\}\times \{|w|<r_k\} }\big(dd^c\max\{\Phi, \gamma_0\log|(z,w)|-\gamma_0\log4\}\big)^{n+m}.
\end{align*}
Letting $r_k\ra0^+$, we obtain that
\begin{align*}
\int_{\{0\}}(dd^c\Phi)^{n+m}
&=\int_{\{0\}}(dd^c\max\{\Phi,\gamma_0\log|(z,w)|- \gamma_0\log4\})^{n+m}\\
&\leq \int_{\{0\}}(dd^c\gamma_0\log|(z,w)|)^{n+m}\\
&= \gamma_0^{n+m},
\end{align*}
where the comparison theorem gives the second inequality.
The claim is thus proved.

As a consequence, we obtain the following estimate
\begin{align}\label{eq:en estimate}
e_{n}(\Phi)&
\leq
\Big(\Pi_{j=1}^{n-1}e_j(\Phi)^{\frac{m\alpha_j}{m(\alpha_1+\cdots+\alpha_{n-1})+(n-1)\alpha_1+\cdots+\alpha_{n-1}}} \Big)
\cdot \gamma_0^{(n+m)(\frac{(n-1)\alpha_1+\cdots+\alpha_{n-1}}{m(\alpha_1+\cdots+\alpha_{n-1})+(n-1)\alpha_1+\cdots+\alpha_{n-1}})}.
\end{align}
Since $dv$ has total mass $1$, there is some $A\in \mc^{nm}$ such that
\begin{align*}
\int_{\{0\}}(dd^c\max\{\varphi(z),\gamma_0\log|Az|\})^n
\leq e_{n}(\Phi)  .
\end{align*}
If $z\in U_k$ and $r_k$ is sufficiently small,
we have $\varphi(z)\geq \gamma'_0\log|z|\geq \gamma_0\log|Az|$.
Hence in $U_k$, $\varphi(z)$ coincides with $\max\{\varphi(z),\gamma_0\log|Az|\}$.
It follows that
\begin{align*}
\int_{\{0\}}(dd^c\varphi)^n
&=\lim_{r_k\ra0}\int_{B(0,r_k)}(dd^c\varphi)^n\\
&=\lim_{r_k\ra0}\int_{B(0,r_k)}(dd^c\max\{\varphi,\gamma_0\log|Az|\})^n\\
&=\int_{\{0\}}(dd^c\max\{\varphi,\gamma_0\log|Az|\})^n\\
&\leq e_n(\Phi),
\end{align*}
where the second equality is given by the integration-by-parts argument.
Letting $\gamma_0\ra \gamma$ and $m\ra\infty$, we obtain that
\begin{align*}
\int_{\{0\}}(dd^c\varphi)^n
&\leq e_1(\Phi)^{\frac{\alpha_1}{\alpha_1+\cdots+\alpha_{n-1}}}\cdot...\cdot e_{n-1}(\Phi)^{\frac{\alpha_{n-1}}{\alpha_1+\cdots+\alpha_{n-1}}}\cdot\gamma^{\frac{(n-1)\alpha_1+\cdots+\alpha_{n-1}}{\alpha_1+\cdots+\alpha_{n-1}}}\\
&\leq \nu_1(\varphi)^{\frac{\alpha_1}{\alpha_1+\cdots+\alpha_{n-1}}}\cdot...\cdot \nu_{n-1}(\varphi)^{\frac{\alpha_{n-1}}{\alpha_1+\cdots+\alpha_{n-1}}}\cdot\gamma^{\frac{(n-1)\alpha_1+\cdots+\alpha_{n-1}}{\alpha_1+\cdots+\alpha_{n-1}}}\\
&=(\frac{\nu_1(\varphi)}{\gamma}\big)^{\frac{\alpha_1}{\alpha_1+\cdots+\alpha_{n-1}}}\big(\frac{\nu_2(\varphi)}{\gamma^2}\big)^{\frac{\alpha_2}{\alpha_1+\cdots+\alpha_{n-1}}}\cdot...\cdot \big(\frac{\nu_{n-1}(\varphi)}{\gamma^{n-1}}\big)^{\frac{\alpha_{n-1}}{\alpha_1+\cdots+\alpha_{n-1}}}\cdot\gamma^{n}.
\end{align*}
where the fist inequality follows from \eqref{eq:en estimate}, and the second one follows from Claim.
Seting $\beta=(\beta_1,\cdots,\beta_{n-1}):=(\frac{\alpha_1}{\alpha_1+\cdots+\alpha_{n-1}},\cdots,\frac{\alpha_{n-1}}{\alpha_1+\cdots+\alpha_{n-1}})$, the proof is complete.
\end{proof}
We study a few examples of plurisubharmonic functions with isolated singularities, which reveals that the estimate in Theorem \ref{thm:estimate} is sharp.
\begin{example}
Let $\varphi(z)=\max\{\log|z_1|^{\alpha_1},\cdots,\log|z_n|^{\alpha_n}\}$ with $0\leq\alpha_1\leq\alpha_2\leq\cdots\leq\alpha_{n-1}$, then $\nu_1(\varphi)=\alpha_1$, $\nu_2(\varphi)=\alpha_1\alpha_2,\cdots,\nu_{n-1}(\varphi)=\alpha_1\cdot...\cdot\alpha_{n-1}$ and $\gamma=\alpha_n$.
Theorem \ref{thm:estimate} gives us the following nontrivial combinatorial inequality
\begin{align*}
\alpha_1\cdot...\cdot\alpha_n
\leq
\left(\frac{\alpha_1}{\alpha_n}\right)^{\beta_1}
\left(\frac{\alpha_1\alpha_2}{\alpha_n^2}\right)^{\beta_2}
\cdot...\cdot
\left(\frac{\alpha_1\cdot...\cdot\alpha_{n-1}}{\alpha_n^{n-1}}\right)^{\beta_{n-1}}
\alpha_n^n,
\end{align*}
which is optimal in certain cases depending on the choices of $\beta_1,\cdots, \beta_{n-1}$.
\end{example}
\begin{example}
Let $\varphi(z_1,z_2)=\frac{1}{2m}\log(|z_1-z_2^m|^2+|z_1^m|^2)$, $m\in\mathbb{N}^*$, then $\nu_1(\varphi)=\frac{1}{m}$ and $\gamma=m
$.
Theorem \ref{thm:estimate} reads as
\begin{align*}
1\leq \frac{1}{m} \cdot m=1.
\end{align*}
This example shows that we cannot expect $ \int_{\{0\}} (dd^c\varphi)^2 \leq \nu_1(\varphi)^{\alpha}\gamma^{2-\alpha}$ for $\alpha>1$.
\end{example}

\begin{example}
The function $\varphi$ considered here is constructed in \cite{LC24}.
Let $\pi:\mc^2\backslash\{0\}\ra\mathbb{P}^1$ be the canonical projection, $\omega_0$ be the Fubini-Study metric on $\mathbb{P}^1$.
Consider $u\in\mbox{PSH}(\mathbb{P}^1,\omega_0)$ so that $u$ is $\omega_0$-pluriharmonic outside $C=\{u=-\infty\}$: a Cantor set, then define
\begin{align*}
\varphi:=\max\{\log|z|^2+\pi^*u,2\log|z|^2\}.
\end{align*}
It is shown in \cite{LC24} that $\nu_2(\varphi)=8$.
It is easy to see that $\nu_1(\varphi)=2$ and $\varphi\geq\gamma\log|z|$ for $\gamma=4$, hence Theorem \ref{thm:estimate} reads as $\nu_2(\varphi)=8\leq 2\cdot4=\nu_1(\varphi)\cdot\gamma$.
This reveals the estimate is sharp again.

\end{example}

\begin{rem}
There is $\varphi\in\mbox{PSH}(\mathbb{B}^n)\cap L^{\infty}_{\mathrm{loc}}(\mathbb{B}^n\backslash\{0\})$ satifying the requirements in Theorem \ref{thm:sharp estimate}, however, $\varphi$ is not bounded from below by any $\gamma\log|z|-C$ $(\gamma,C>0)$.
Indeed, we can select a sequence of $\{z_k\}$ such that $r_k<|z_k|<r_{k+1}$ and consider $\varphi(z):=\sum_{k=1}^{\infty}\varepsilon_k\max{\{\log|z-z_k|,-N_k\}}$, where $\varepsilon_k>0$ are sufficiently small, $N_k>0$ are sufficiently large.
\end{rem}

\section{Relations to known results}\label{sec:zero mass conjecture}
In this section,
we apply Theorem \ref{thm:max final version} to recover or improve some known results in the literatures related to the zero mass conjecture of Guedj-Rashkovskii.

For $\varphi\in \mbox{PSH}(\mathbb{B}^n)$,
we say $\varphi$ is $S^1$-invariant if $\varphi(e^{i\theta}z)=\varphi(z)$, $\forall \theta\in\mr$.
\begin{lem}\label{lem:radial sh}
Suppose $u\in \mbox{SH}(\mathbb{B})$ and $u(w)=u(|w|)$, then the function $t\mapsto u(e^t)$, $t\in(-\infty,0)$ is convex and increasing.
In particular, the right-derivative $t\mapsto \partial^+_t\varphi(e^t)$ exists, is non-negative and non-decreasing with respect to $t$.
\end{lem}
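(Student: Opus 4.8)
The plan is to set $v(t):=u(e^t)$ for $t\in(-\infty,0)$, which is well defined because $u$ is radial, and to reduce everything to the statement that \emph{$v$ is finite, convex, and non-decreasing on $(-\infty,0)$}. Granting this, the assertions about the right-derivative are exactly the elementary facts that a finite convex function on an open interval has a right-derivative at every point, that this right-derivative is non-decreasing, and that for a non-decreasing function all difference quotients (hence the right-derivative) are $\ge 0$. First I would observe that $v$ is finite everywhere: if $u(e^{t_0})=-\infty$ for some $t_0<0$, then by radiality $u\equiv-\infty$ on the whole circle $\{|w|=e^{t_0}\}$, which is a non-polar subset of $\mathbb{C}$, forcing $u\equiv-\infty$ on $\mathbb{B}$, contrary to $u\in\mbox{SH}(\mathbb{B})$.

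For convexity I would fix $-\infty<t_1<t_2<0$ and let $L(s)=A+Bs$ be the affine function with $L(t_1)=v(t_1)$ and $L(t_2)=v(t_2)$. Then $h(w):=L(\log|w|)$ is harmonic on the annulus $\Omega:=\{e^{t_1}<|w|<e^{t_2}\}$, and by radiality $u-h\equiv 0$ on the two boundary circles $\partial\Omega$. Since $u-h$ is subharmonic on $\Omega$ and upper semicontinuous on the compact set $\overline{\Omega}\subset\mathbb{B}$, the maximum principle gives $u-h\le 0$ on $\overline{\Omega}$, i.e. $v(s)\le L(s)$ for all $s\in[t_1,t_2]$. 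As $L$ is precisely the chord of $v$ over $[t_1,t_2]$ and $t_1<t_2$ were arbitrary, $v$ is convex. (This is, of course, the classical Hadamard three-circles / monotonicity-of-circular-means statement; I only spell out its one-line proof in the radial case, and one could instead simply quote it.)

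It remains to show $v$ is non-decreasing. Being convex, $v$ admits a non-decreasing right-derivative $\partial^+_t v$ on $(-\infty,0)$. If $v$ were not non-decreasing, there would be $t_1<t_2$ with $v(t_1)>v(t_2)$; then $\partial^+_t v(t_1)\le \frac{v(t_2)-v(t_1)}{t_2-t_1}<0$, and the subgradient inequality for the convex function $v$ gives $v(s)\ge v(t_1)+\partial^+_t v(t_1)\,(s-t_1)$, whose right side tends to $+\infty$ as $s\to-\infty$. Thus $u(w)\to+\infty$ as $w\to 0$, contradicting that the subharmonic (hence upper semicontinuous, hence locally bounded above) function $u$ is bounded above near the origin. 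Therefore $v$ is non-decreasing, all its difference quotients are $\ge 0$, and consequently $\partial^+_t v\ge 0$; combined with the convexity this yields all the claimed properties.

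I do not expect a genuine obstacle here: the content is a repackaging of the maximum principle and the one-variable theory of convex functions. The only points that need a word of care are the finiteness of $v$ (handled above via non-polarity of circles, and needed so that ``convex'' is not vacuous) and the legitimacy of applying the maximum principle on $\overline{\Omega}$, which is immediate since $\overline{\Omega}$ is a compact subset of $\mathbb{B}$. If one prefers, the whole lemma can be deduced in two lines from the classical theorem that the circular means $M_u(r)=\tfrac{1}{2\pi}\int_0^{2\pi}u(re^{i\theta})\,d\theta$ are non-decreasing and convex in $\log r$, together with the identity $M_u(r)=u(r)$ for radial $u$.
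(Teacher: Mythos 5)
Your proof is correct and is essentially the paper's argument spelled out in detail: the paper simply quotes the classical fact that $t\mapsto\sup_{|z|=e^t}u(z)$ is convex and non-decreasing and notes that for radial $u$ this sup is $u(e^t)$ itself, whereas you reprove that classical fact (convexity via the two-circle maximum-principle comparison, monotonicity via the subgradient inequality and boundedness above near $0$) before drawing the same conclusion. The closing remark of your write-up, reducing everything to the monotonicity/convexity of circular means, is precisely the paper's one-line proof.
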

\begin{proof}
For $u\in\mbox{SH}(\mathbb{B})$,
it is well-known that the function $t\mapsto \sup_{|z|=e^t}u(e^t)$, $t\in(-\infty,0)$ is convex and increasing.
The conclusion follows immediately since $u$ is radial.
\end{proof}
\begin{defn}(=Definition \cite[Definition 2.6, Definition 2.7]{HLX23})
The maximal directional Lelong number of $\varphi$ at a distance of $t\in(-\infty,0)$ is defined as
\begin{align*}
M_{t}(\varphi):=\sup_{|z|=1}\partial^+_t\varphi(e^{t}z).
\end{align*}
Since $M_t(\varphi)$ is non-decreasing with respect to $t$, we can take the limit as $t\ra-\infty$.
The maximal directional Lelong number of $\varphi$ is defined as
\begin{align*}
\lambda_{\varphi}(0):=\lim_{t\ra-\infty}M_{t}(\varphi).
\end{align*}
\end{defn}

\begin{cor}(=Corollary \ref{cor:S1 invariant})
Suppose $\varphi\in\mbox{PSH}(\mathbb{B}^n)\cap L^{\infty}_{\mathrm{loc}}(\mathbb{B}^n\backslash\{0\})$,
if $\varphi$ is $S^1$-invariant, it holds that
\begin{align*}
\int_{\{0\}}(dd^c\varphi)^n\leq\nu_1(\varphi)\lambda_{\varphi}(0)^{n-1}.
\end{align*}
In particular, if $\nu_{1}(\varphi)=0$, then $\nu_n(\varphi)=0$.
\end{cor}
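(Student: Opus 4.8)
The plan is to deduce the corollary from Theorem~\ref{thm:estimate}, applied with $\beta_1=1$ and $\beta_2=\cdots=\beta_{n-1}=0$, once we know that for an $S^1$-invariant $\varphi$ one has $lt(\varphi,0)\le\lambda_\varphi(0)<+\infty$. The case $n=1$ is trivial (both sides equal $\nu_1(\varphi)$), so assume $n\ge2$. For finiteness of $\lambda_\varphi(0)$, note that for any fixed $t_0<-1$, convexity of the slices $s\mapsto\varphi(e^{s}z)$ (established in the next paragraph) bounds the right derivative at $t_0$ by a difference quotient, so $M_{t_0}(\varphi)\le\sup_{|v|=e^{t_0+1}}\varphi-\inf_{|v|=e^{t_0}}\varphi<+\infty$, since $\varphi\in L^\infty_{\mathrm{loc}}(\mathbb{B}^n\setminus\{0\})$ and both spheres are compact in $\mathbb{B}^n\setminus\{0\}$; as $M_t(\varphi)$ is non-decreasing in $t$, this gives $\lambda_\varphi(0)\le M_{t_0}(\varphi)<+\infty$.

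First I would pass to complex slices. For a fixed unit vector $z$, the $S^1$-invariance of $\varphi$ makes $\zeta\mapsto\varphi(\zeta z)$ a radial subharmonic function on the unit disc, so Lemma~\ref{lem:radial sh} shows $t\mapsto\varphi(e^{t}z)$ is convex and increasing with right derivative $\partial^+_t\varphi(e^{t}z)$ non-negative, non-decreasing, and bounded above by $M_t(\varphi)$. Given $\epsilon>0$, since $M_t(\varphi)$ is non-decreasing in $t$ with $\lim_{t\to-\infty}M_t(\varphi)=\lambda_\varphi(0)$, there is $T_\epsilon<0$ with $M_t(\varphi)\le\lambda_\varphi(0)+\epsilon$ for all $t\le T_\epsilon$. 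Applying the fundamental theorem of calculus to the convex function $s\mapsto\varphi(e^{s}z)$ on $(-\infty,T_\epsilon]$,
\[
\varphi(e^{t}z)=\varphi(e^{T_\epsilon}z)-\int_{t}^{T_\epsilon}\partial^+_s\varphi(e^{s}z)\,ds\ \ge\ \varphi(e^{T_\epsilon}z)-(\lambda_\varphi(0)+\epsilon)(T_\epsilon-t),\qquad t\le T_\epsilon .
\]
Since $\varphi$ is bounded below on the compact sphere $\{|v|=e^{T_\epsilon}\}$, writing an arbitrary point as $\zeta$ with $\log|\zeta|=t$ yields a uniform lower bound $\varphi(\zeta)\ge(\lambda_\varphi(0)+\epsilon)\log|\zeta|+C_\epsilon$ for $0<|\zeta|<e^{T_\epsilon}$, where $C_\epsilon\in\mathbb{R}$.

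Dividing by $\log|\zeta|<0$ reverses the inequality, giving $\varphi(\zeta)/\log|\zeta|\le\lambda_\varphi(0)+\epsilon+C_\epsilon/\log|\zeta|$; the last term tends to $0$ as $\zeta\to0$, so $F(r,0)\le\lambda_\varphi(0)+\epsilon+o(1)$ as $r\to0$, hence $lt(\varphi,0)=\liminf_{r\to0}F(r,0)\le\lambda_\varphi(0)+\epsilon$, and letting $\epsilon\to0$ we get $lt(\varphi,0)\le\lambda_\varphi(0)$. Thus $\varphi$ satisfies the hypotheses of Theorem~\ref{thm:estimate} with $\gamma=lt(\varphi,0)<+\infty$; choosing $\beta_1=1$ and $\beta_j=0$ for $2\le j\le n-1$, the right-hand side of that estimate collapses to $(\nu_1(\varphi)/\gamma)\cdot\gamma^{n}=\nu_1(\varphi)\,\gamma^{n-1}$, whence
\[
\nu_n(\varphi)\ \le\ \nu_1(\varphi)\,lt(\varphi,0)^{n-1}\ \le\ \nu_1(\varphi)\,\lambda_\varphi(0)^{n-1},
\]
the last inequality using $lt(\varphi,0)\le\lambda_\varphi(0)$ and $\nu_1(\varphi)\ge0$; the ``in particular'' statement is then immediate. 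The main obstacle is making the slice estimate uniform over all directions $z$ simultaneously --- precisely what the monotonicity of $M_t(\varphi)$ and its convergence to $\lambda_\varphi(0)$ supply --- after which the rest is routine bookkeeping with the definitions of $F(r,0)$ and $lt(\varphi,0)$.
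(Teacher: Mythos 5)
Your proof is correct and takes essentially the same approach as the paper: both use Lemma~\ref{lem:radial sh} to exploit convexity of the radial slices $t\mapsto\varphi(e^{t}z)$, deduce a lower bound $\varphi\geq(\lambda_\varphi(0)+\epsilon)\log|z|+O(1)$ (the paper phrases this with a parameter $\lambda_0>\lambda_\varphi(0)$ and a single difference-quotient estimate rather than your integral of the right derivative, but this is a cosmetic difference), and then invoke Theorem~\ref{thm:estimate} with $\beta_1=1$, $\beta_2=\cdots=\beta_{n-1}=0$ and pass to the limit.
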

\begin{proof}
The $S^1$-invariance of $\varphi$ implies that for every $z\neq0$, the function
$$\mathbb{B}\ni w\mapsto \varphi(w\cdot \frac{z}{|z|})$$
is radial.
By Lemma \ref{lem:radial sh}, for $|z|<e^t$ and $t<-2$, it holds that
\begin{align*}
\frac{\varphi(z)-\varphi(e^{t}\cdot\frac{z}{|z|})}{\log|z|-t}
\leq \partial^+_t\varphi(e^t\frac{z}{|z|})
\leq \frac{\varphi(e^{-2}\cdot\frac{z}{|z|})-\varphi(e^{-1}\cdot\frac{z}{|z|})}{(-2)-(-1)}.
\end{align*}
The second inequality above implies that $M_t(\varphi) \leq \sup_{|z|=\frac{1}{e}}\big(\varphi(z)-\varphi(\frac{z}{e})\big)<+\infty$ for $t<-2$.
Fixing arbitrary $\lambda_0>\lambda_{\varphi}(0)$, there exists $t_0$ sufficiently small such that $M_{t_0}(\varphi)\leq \lambda_0$.
Using the first inequality above, we can deduce that for $|z|<e^{t_0}$
\begin{align*}
\varphi(z) &\geq M_{t_0}(\varphi)\log|z|+\inf_{|z|=e^{t_0}}\varphi(z)\\
&\geq \lambda_0 \log|z|+\inf_{|z|=e^{t_0}}\varphi(z).
\end{align*}
Theorem \ref{thm:sharp estimate} then yields that
\begin{align*}
\int_{\{0\}}(dd^c\varphi)^n \leq \nu_1(\varphi)\lambda_0^{n-1}.
\end{align*}
Letting $\lambda_0\searrow \lambda_{\varphi}(0)$, the proof is complete.
\end{proof}

In classical potential theory, it is well-known that any $u\in\mbox{SH}(\Omega)$ belongs to $W^{1,1+\varepsilon}_{\mathrm{loc}}(\Omega)$ for some $\varepsilon>0$,
therefore the first order derivatives of $u$ is well-defined.
\begin{defn}\label{def:uniform Lip}(=\cite[Definition 2.7]{HLX24})
A function $\varphi\in\mbox{PSH}(\mathbb{B}^n)$ is said to be uniformly directional Lipschitz continuous near the origin if the derivative $r\varphi_r$ is an $L^{\infty}$ function in a smaller ball, i.e. we have
\begin{align*}
L_t(\varphi):=\|r\varphi_r\|_{L^{\infty}(\overline{B_R})}<+\infty
\end{align*}
for some $t=\log R<0$.
Since $L_t(\varphi)$ is non-decreasing in $t$, we define the directional Lipschitz constant of $\varphi$ at the origin as
\begin{align*}
\kappa_{\varphi}(0):=\lim_{t\ra-\infty}L_t(\varphi).
\end{align*}
\end{defn}
We improve \cite[Theorem 1.1]{HLX24} as follows.
\begin{cor}(=Corollary \ref{cor:uniform Lip})
Suppose $\varphi\in\mbox{PSH}(\mathbb{B}^n)$ is uniformly directional Lipschitz continuous near the origin,
then for any $\gamma>\kappa_{\varphi}(0)$ we have $\varphi\geq\gamma\log|z|+O(1)$ near the origin.
Moreover the following estimate for residue mass holds:
\begin{align*}
\int_{\{0\}}(dd^c\varphi)^n\leq \nu_1(\varphi)\kappa_{\varphi}(0)^{n-1}.
\end{align*}
\end{cor}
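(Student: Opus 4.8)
The plan is to reduce the corollary to Theorem~\ref{thm:sharp estimate}, applied with the exponent vector $\beta=(1,0,\dots,0)$; the only real work is to convert the uniform directional Lipschitz bound (Definition~\ref{def:uniform Lip}) into a lower bound of the shape $\varphi\ge\gamma\log|z|+O(1)$, after which the argument is the same as for Corollary~\ref{cor:S1 invariant}.

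First I would produce the radial lower bound. Fix $\gamma>\kappa_\varphi(0)=\lim_{t\to-\infty}L_t(\varphi)=\inf_tL_t(\varphi)$. Choose $t_0<0$ with $L_{t_0}(\varphi)<\gamma$ and set $R_0:=e^{t_0}$, so that $|s\varphi_s|\le L_{t_0}(\varphi)<\gamma$ almost everywhere on $\overline{B_{R_0}}$. Since $\varphi\in\mbox{PSH}(\mathbb{B}^n)\subset W^{1,1+\varepsilon}_{\mathrm{loc}}(\mathbb{B}^n)$, for almost every unit vector $\zeta$ the slice $s\mapsto\varphi(s\zeta)$ is absolutely continuous on $(0,R_0]$ with $\partial_s\varphi(s\zeta)=\varphi_s(s\zeta)$ a.e.; integrating the fundamental theorem of calculus gives, for $0<r\le R_0$,
\[
\varphi(r\zeta)=\varphi(R_0\zeta)-\int_r^{R_0}\varphi_s(s\zeta)\,ds\ \ge\ \varphi(R_0\zeta)-\gamma\int_r^{R_0}\frac{ds}{s}=\gamma\log r+\big(\varphi(R_0\zeta)-\gamma\log R_0\big).
\]
As $\varphi$ is bounded below on the sphere $\{|z|=R_0\}$, say by $-C_0$, this yields $\varphi(z)\ge\gamma\log|z|-(C_0+\gamma\log R_0)$ for almost every $z\in B_{R_0}$; since the right-hand side is continuous on $B_{R_0}\setminus\{0\}$ and $\varphi$ is subharmonic, comparing the (monotone) ball-averages of $\varphi$ with its value upgrades the inequality to every $z\in B_{R_0}\setminus\{0\}$. (Equivalently, one first regularises $\varphi$ by convolution and argues as in \cite{HLX24}.) This is the first assertion of the corollary.

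Next, dividing the displayed inequality by $\log|z|<0$ and letting $|z|\to0$ gives $lt(\varphi,0)\le\gamma$; letting $\gamma\downarrow\kappa_\varphi(0)$ yields $lt(\varphi,0)\le\kappa_\varphi(0)<+\infty$, so $\varphi$ lies in the Cegrell class near $0$ and $\nu_1(\varphi),\dots,\nu_n(\varphi)$ are defined. Applying Theorem~\ref{thm:sharp estimate} with $\beta=(1,0,\dots,0)$ and $\gamma=lt(\varphi,0)$ then gives
\[
\nu_n(\varphi)\ \le\ \Big(\frac{\nu_1(\varphi)}{lt(\varphi,0)}\Big)\cdot lt(\varphi,0)^{n}\ =\ \nu_1(\varphi)\,lt(\varphi,0)^{n-1}\ \le\ \nu_1(\varphi)\,\kappa_\varphi(0)^{n-1},
\]
using $lt(\varphi,0)\le\kappa_\varphi(0)$ and $\nu_1(\varphi)\ge0$; the degenerate case $lt(\varphi,0)=0$ forces $\nu_1(\varphi)=0$ (recall $\nu_1(\varphi)\le lt(\varphi,0)$ always) and is covered by the last assertion of Theorem~\ref{thm:sharp estimate}. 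This completes the proof.

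The main obstacle is the first step: making rigorous the fundamental theorem of calculus along radial slices for the merely $W^{1,1+\varepsilon}$-regular $\varphi$, and then passing from the resulting almost-everywhere bound to a genuine pointwise bound. Both points are routine — the former by the standard Fubini property of Sobolev functions, the latter by the monotone convergence of ball-averages of a subharmonic function (or by a convolution regularisation, as in \cite{HLX24}) — and everything after that is a direct specialisation of Theorem~\ref{thm:sharp estimate}, requiring no idea beyond what is already used in Corollary~\ref{cor:S1 invariant}.
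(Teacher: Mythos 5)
Your proposal is correct and follows the same overall strategy as the paper: first convert the uniform directional Lipschitz bound into $\varphi\ge\gamma\log|z|+O(1)$ by integrating the radial derivative, then specialize Theorem~\ref{thm:sharp estimate} to $\beta=(1,0,\dots,0)$ and use $lt(\varphi,0)\le\kappa_\varphi(0)$. The only real difference lies in the technical handling of the radial integration: you invoke the Sobolev/ACL property of $W^{1,1+\varepsilon}$ functions to justify the fundamental theorem of calculus along almost every radial slice, and then upgrade the a.e.\ lower bound to a pointwise one via the monotone convergence of ball averages of the subharmonic $\varphi$; the paper instead works with the smooth regularisations $\varphi_\varepsilon$ throughout, proves an auxiliary estimate (their Lemma before the Corollary) bounding $L_t(\varphi_\varepsilon)\le L_{t+1}(\varphi)+n\varepsilon\|D\varphi\|_{L^1}$, and passes to the limit $\varepsilon\to0$ using $\varphi_\varepsilon(z)\to\varphi(z)$ pointwise—this gives the bound at every $z$ at once without a separate a.e.-to-pointwise upgrade. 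You flag the convolution route as an alternative yourself, so the two arguments are effectively the same; yours relies a little more on general Sobolev-slicing theory, theirs on an explicit regularisation estimate, with no difference in strength or generality.
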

We need the following lemma to give an upper bound for $r\partial_r\varphi_{\varepsilon}$, where $\varphi_{\varepsilon}$ is the standard smooth approximation of $\varphi$.
\begin{lem}
Suppose $\varphi\in\mbox{PSH}(\mathbb{B}^n)$ is uniformly directional Lipschitz continuous near the origin, then for $t<-1$ and $\varepsilon $ small,
$L_t(\varphi_{\varepsilon})\leq L_{t+1}(\varphi)+n\varepsilon \|D\varphi\|_{L^1(B_{R})}$ with $\log R=t+1$.
\end{lem}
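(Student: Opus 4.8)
The plan is to work with the standard mollification $\varphi_\varepsilon=\varphi*\rho_\varepsilon$, where $\rho_\varepsilon(y)=\varepsilon^{-2n}\rho(y/\varepsilon)$ is a radial mollifier with $\rho\ge 0$, $\int\rho=1$, $\operatorname{supp}\rho\subset\{|y|<1\}$. Since every $\varphi\in\mathrm{PSH}(\mathbb{B}^n)$ lies in $W^{1,1}_{\mathrm{loc}}$, one may differentiate under the convolution, $\nabla\varphi_\varepsilon=(\nabla\varphi)*\rho_\varepsilon$. Writing the radial (Euler) derivative as $r\partial_r f(z)=z\cdot\nabla f(z)$ under the identification $\mathbb{C}^n\cong\mathbb{R}^{2n}$, this gives
\[
(r\partial_r\varphi_\varepsilon)(z)=\int_{|y|<\varepsilon}z\cdot\nabla\varphi(z-y)\,\rho_\varepsilon(y)\,dy .
\]
The whole point will be to separate from this integral a main term controlled by $L_{t+1}(\varphi)$ and an error of size $O(\varepsilon)$.

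First I would write $z=(z-y)+y$ inside the integral. The $(z-y)$-part is $\int (r'\partial_{r'}\varphi)(z-y)\,\rho_\varepsilon(y)\,dy$, and since $\int\rho_\varepsilon=1$ its absolute value is at most $\|r\partial_r\varphi\|_{L^\infty(\overline{B_R})}=L_{t+1}(\varphi)$: for $|z|\le e^t$ and $\varepsilon$ small enough that $e^t+\varepsilon\le e^{t+1}=R$, one has $z-y\in\overline{B_R}$. The remaining $y$-part is $\mathcal{E}(z):=\int_{|y|<\varepsilon}y\cdot\nabla\varphi(z-y)\,\rho_\varepsilon(y)\,dy$. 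A convenient observation is the identity $\mathcal{E}(z)=-\varepsilon\,\partial_\varepsilon\varphi_\varepsilon(z)$, obtained by differentiating $\varphi_\varepsilon(z)=\int\varphi(z-\varepsilon\xi)\rho(\xi)\,d\xi$ in $\varepsilon$; since $\varphi$ is subharmonic, $\varepsilon\mapsto\varphi_\varepsilon(z)$ is non-decreasing, so $\mathcal{E}(z)\le 0$ and already the one-sided bound $(r\partial_r\varphi_\varepsilon)(z)\le L_{t+1}(\varphi)$ is in hand. For the matching lower bound I would estimate $|\mathcal{E}(z)|\le \varepsilon\int_{|y|<\varepsilon}|\nabla\varphi(z-y)|\,\rho_\varepsilon(y)\,dy$ using $|y|<\varepsilon$ on $\operatorname{supp}\rho_\varepsilon$, and then convert the right-hand side, by an integration by parts transferring the factor $y$ from $\varphi(z-y)$ onto the mollifier, into $\varepsilon$ times an $L^1$-norm of the first derivatives of $\varphi$ on $B_R$ (the inclusion $B_\varepsilon(z)\subset B_R$ holding again for $|z|\le e^t$ and $\varepsilon$ small); summing over the $n$ complex directions produces the constant $n$. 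Combining the two parts yields $\|r\partial_r\varphi_\varepsilon\|_{L^\infty(\overline{B_{e^t}})}\le L_{t+1}(\varphi)+n\varepsilon\|D\varphi\|_{L^1(B_R)}$.

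The step I expect to be the main obstacle is precisely this estimate for $\mathcal{E}$ in the sharp form $n\varepsilon\|D\varphi\|_{L^1(B_R)}$: a naive triangle-inequality bound on $\int y\cdot\nabla\varphi(z-y)\rho_\varepsilon(y)\,dy$ discards the cancellation carried by the odd-type weight $y\,\rho_\varepsilon(y)$ and leaves a constant that degenerates as $\varepsilon\to0$, so one must first integrate by parts — equivalently, exploit the identity $\mathcal{E}=-\varepsilon\partial_\varepsilon\varphi_\varepsilon$ together with the monotonicity of $\varphi_\varepsilon$ in $\varepsilon$ — and only then estimate the resulting integral against $\|D\varphi\|_{L^1(B_R)}$. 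The other ingredients — differentiating under the convolution, the $(z-y)+y$ split, and the nesting $B_\varepsilon(z)\subset B_R$ — are routine, and the hypotheses $t<-1$ and ``$\varepsilon$ small'' serve exactly to keep all relevant balls inside $B_R\subset\mathbb{B}^n$.
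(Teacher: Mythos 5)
Your decomposition is the same one the paper uses, just written vectorially. Writing $r\partial_r\varphi_\varepsilon(z)=\int z\cdot\nabla\varphi(z-y)\rho_\varepsilon(y)\,dy$ and splitting $z=(z-y)+y$ is identical, after summing over the real components $x^1,\dots,x^{2n}$, to the paper's comparison of $x^k(\partial_{x^k}\varphi*\rho_\varepsilon)$ with $(x^k\partial_{x^k}\varphi)*\rho_\varepsilon$: the $(z-y)$ part is $(r\partial_r\varphi)*\rho_\varepsilon$, bounded by $L_{t+1}(\varphi)$ since $\int\rho_\varepsilon=1$, and the $y$ part is the same error integral $\varepsilon\int y^k\partial_{x^k}\varphi(x-\varepsilon y)\rho(y)\,dy$. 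So the core argument matches.

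Where you diverge is in how to close the error estimate, and your hesitation there is well founded. The paper's own final line simply takes absolute values and asserts $\int_{|y|<1}|\partial_{x^k}\varphi(x-\varepsilon y)|\rho(y)\,dy\leq\|D\varphi\|_{L^1(B_R)}$, but this is exactly the ``naive triangle-inequality bound'' you flag: the left side is a weighted average of $|D\varphi|$ over a ball of radius $\varepsilon$ and, after rescaling, carries an $\varepsilon^{-2n}\|\rho\|_\infty$ factor that is not controlled by the $L^1$ norm of $D\varphi$. So the stated pointwise bound with the constant $n\varepsilon\|D\varphi\|_{L^1(B_R)}$ does not follow from the paper's proof either. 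Your proposed route around this — the identity $\mathcal{E}(z)=-\varepsilon\,\partial_\varepsilon\varphi_\varepsilon(z)$ together with the monotonicity of $\varepsilon\mapsto\varphi_\varepsilon(z)$ for (pluri)subharmonic $\varphi$ — is a genuinely cleaner observation: it gives the one-sided inequality $r\partial_r\varphi_\varepsilon\leq(r\partial_r\varphi)*\rho_\varepsilon\leq L_{t+1}(\varphi)$ directly, with no error term at all on that side. Notably, that one-sided bound is all that is actually used in the subsequent corollary (one only needs an upper bound on $\partial_s\varphi_\varepsilon(e^s\xi)$ to deduce $\varphi\geq\gamma\log|z|+O(1)$), so your version in fact repairs the gap for the purposes of the application, even though you did not produce the stated two-sided sharp constant. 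The part of your sketch where you take absolute values and then still speak of ``integrating by parts to move the factor $y$ onto the mollifier'' is internally inconsistent — after the triangle inequality the odd weight, and hence the cancellation, is gone — but you correctly identified that cancellation as the crux and that the naive bound degenerates as $\varepsilon\to0$.
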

\begin{proof}
By definition, $r\partial_r\varphi=\sum_{k=1}^{2n}x^k\partial_{x^k}\varphi$.
For $\varphi_{\varepsilon}=\varphi*\rho_{\varepsilon}$, $r\partial_r \varphi_{\varepsilon}
=r(\partial_r\varphi*\rho_{\varepsilon})
=\sum_{k=1}^{2n}x^k(\partial_{x^k}\varphi*\rho_{\varepsilon})$,
then for $|x|\leq e^t$ and $\varepsilon<e^{t+1}-e^{t}$, we compute that
\begin{align*}
\Big|x^k(\partial_{x^k}\varphi*\rho_{\varepsilon})-(x^k\partial_{x^k}\varphi)*\rho_{\varepsilon}\Big|(x)
&\leq\int_{|y|<1}|x^k-(x^k-\varepsilon y^k)|\cdot|\partial_{x^k}\varphi(x-\varepsilon y)|\rho(y)dV(y)\\
&\leq\varepsilon\|D\varphi\|_{L^1(B_{e^{t+1}})}.
\end{align*}
The estimate $L_t(\varphi_{\varepsilon})\leq L_{t+1}(\varphi)+n\varepsilon\|D\varphi\|_{L^1(B_R)}$ follows as desired.
\end{proof}
\begin{proof}
Since $\gamma>\kappa_{\varphi}(0)$, there is a negative number $t_0<0$ so that $\gamma> L_{t_0+1}(\varphi)$.
Fix $|\xi|=1$, we consider the function defined on $(-\infty,0):t\mapsto\varphi(e^t\xi)$.
For any $t<t_0$, we have $\varphi(e^t\xi)-\varphi(e^{t_0}\xi)=\lim_{\varepsilon\ra0}\varphi_{\varepsilon}(e^t\xi)-\varphi_{\varepsilon}(e^{t_0}\xi)=\lim_{\varepsilon\ra0}-\int_{t}^{t_0}\partial_t (\varphi_{\varepsilon}(e^t\xi))dt\geq L_{t_0+1}(\varphi)(t-t_0)$.
Since $\xi$ is arbitrary, we see that for every $z\in\mathrm{B}^n$, $|z|<e^{t_0}$,
\begin{align*}
\varphi(z)
&\geq L_{t_0+1}(\varphi)\log|z|-t_0L_{t_0+1}(\varphi)+\inf_{|w|=e^{t_0}}\varphi(w)\\
&\geq \gamma\log|z|+O(1).
\end{align*}
The estimate for the residue mass follows from Theorem \ref{thm:estimate} immediately.
\end{proof}

If $e^{\varphi}$ is H\"older continuous near the origin, \cite{BFJ08}, \cite{Ra13-1} have proved that $\nu_1(\varphi)=0$ implies $\nu_n(\varphi)=0$.
\begin{cor}
Suppose $\varphi\in\mbox{PSH}(\mathbb{B}^n)$, $e^{\varphi}$ is H\"older continuous.
If $\nu_1(\varphi)=0$, then $\varphi\geq\gamma\log|z|+O(1)$ and $\nu_n(\varphi)=0$.
\end{cor}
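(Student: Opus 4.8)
The plan is to deduce everything from Theorem~\ref{thm:estimate}: I will show that H\"older continuity of $e^{\varphi}$ together with $\nu_1(\varphi)=0$ forces a lower bound $\varphi(z)\ge\gamma\log|z|+O(1)$ near the origin for some $\gamma>0$, so that in particular $lt(\varphi,0)\le\gamma<\infty$; then $\nu_n(\varphi)=0$ is immediate from Theorem~\ref{thm:estimate}. To set up, fix $\alpha\in(0,1]$, $C>0$ and $\rho>0$ with
\[
|e^{\varphi(z)}-e^{\varphi(w)}|\le C|z-w|^{\alpha}\qquad(z,w\in\mathbb{B}^n_{\rho}),
\]
and recall the classical fact that the Lelong number $\nu_1(\varphi)$ of Definition~\ref{def:Lelong number higher degree} coincides with the ordinary Lelong number of $\varphi$ at $0$, hence with $\lim_{r\to0^+}\bigl(\max_{|z|=r}\varphi(z)\bigr)/\log r$. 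Thus $\nu_1(\varphi)=0$ yields, for the choice $\varepsilon=\alpha/2$, a radius $r_0\in(0,\rho)$ with $\max_{|z|=r}\varphi(z)\ge\varepsilon\log r$, equivalently $\exp\bigl(\max_{|z|=r}\varphi\bigr)\ge r^{\alpha/2}$, for all $0<r<r_0$.

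The core step is to spread this spherical maximum around each sphere by H\"older continuity. For $0<r<r_0$ pick $z_r$ with $|z_r|=r$ and $\varphi(z_r)=\max_{|z|=r}\varphi$. For any $z$ with $|z|=r$ one has $|z-z_r|\le 2r$, hence
\[
e^{\varphi(z)}\ \ge\ e^{\varphi(z_r)}-C(2r)^{\alpha}\ \ge\ r^{\alpha/2}-C2^{\alpha}r^{\alpha}.
\]
Since $\alpha/2<\alpha$, after shrinking $r_0$ we may assume $C2^{\alpha}r^{\alpha}\le\tfrac12 r^{\alpha/2}$ for $0<r<r_0$, so $e^{\varphi(z)}\ge\tfrac12 r^{\alpha/2}$ and therefore
\[
\varphi(z)\ \ge\ \tfrac{\alpha}{2}\log|z|-\log 2\qquad(0<|z|<r_0).
\]

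It follows that $\varphi$ is bounded below on $\mathbb{B}^n_{r_0}\setminus\{0\}$ (psh functions being locally bounded above), so $\varphi$ has an isolated singularity at $0$ and lies in $\mathcal{E}(\mathbb{B}^n_{r_0})$; moreover the lower bound gives $F(r,0)\le\tfrac{\alpha}{2}+\log 2/|\log r|\to\tfrac{\alpha}{2}$ as $r\to0$, hence $lt(\varphi,0)\le\tfrac{\alpha}{2}<\infty$. Applying Theorem~\ref{thm:estimate} with $\nu_1(\varphi)=0$ then gives $\nu_n(\varphi)=0$, and $\varphi\ge\gamma\log|z|+O(1)$ holds with $\gamma=\alpha/2$.

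The only non-routine ingredient is the first reduction, namely the use of $\nu_1(\varphi)=\lim_{r\to0^+}\max_{|z|=r}\varphi(z)/\log r$; I expect this to be the main (and essentially only) obstacle, and it is dispatched by citing the standard comparison between the Lelong number defined by the Monge--Amp\`ere current $(dd^c\varphi)\wedge(dd^c\log|z|)^{n-1}$ and the classical one. Everything after that is the two-line interpolation above. This argument in particular recovers the results of \cite{BFJ08} and \cite{Ra13-1} quoted just before the statement.
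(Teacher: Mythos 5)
Your argument is correct and follows essentially the same route as the paper: use $\nu_1(\varphi)=0$ to locate on each small sphere a point where $\varphi$ is not too negative, spread that bound over the whole sphere via the H\"older continuity of $e^{\varphi}$ (the $|z-z_0|\le 2r$ trick), conclude $lt(\varphi,0)<\infty$, and invoke Theorem~\ref{thm:estimate}. Your write-up is slightly more careful about the exponent (you get $\gamma=\alpha/2$ rather than the paper's claimed $\alpha$, which is the correct outcome of that comparison), but the idea and structure are identical.
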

\begin{proof}
Suppose $e^{\varphi}$ is $\alpha$-H\"older continuous.
The assumption $\nu_1(\varphi)=0$ implies that on the sphere $\{|z|=r\}$ with $r$ small enough, there is some $z_0$ such that $\varphi(z_0)\geq\frac{\alpha}{2}\log|z_0|=\frac{\alpha}{2}\log r$.
Note that $|z-z_0|\leq 2r$ if $|z|=r$, hence $|e^{\varphi(z)}-e^{\varphi(z_0)}|\leq C(2r)^{\alpha}$ gives $\varphi(z)\geq \alpha\log|z|+O(1)$.
Therefore Theorem \ref{thm:estimate} leads us to $\nu_n(\varphi)=0$ as desired.
\end{proof}

\section*{Appendix: the proofs of Theorem \ref{thm:max final version} and Theorem \ref{thm:n-1} in the full generality}
We prove Theorem \ref{thm:max final version} and Theorem \ref{thm:n-1} for  their full generality in this appendix.

The seminal papers \cite{Ce98},\cite{Ce04} introduced a class of plurisubharmonic functions on bounded hyperconvex domains, now known as the Cegrell class, which is defined as follows.
\begin{defn}\label{def:Cegrell class}
Let $\Omega\subset\mc^n$ be a bounded hyperconvex domain, we define
\begin{align*}
\mathcal{E}_0(\Omega)
&=\{\varphi\in\mbox{PSH}^-(\Omega)\cap L^{\infty}(\Omega):\lim_{z\ra\partial\Omega}\varphi(z)=0,\int_{\Omega}(dd^c\varphi)^n<+\infty\},\\
\mathcal{F}(\Omega)
&=\{\varphi\in\mbox{PSH}^-(\Omega):\exists \ \mathcal{E}_0(\Omega)\ni \varphi_j\searrow\varphi,\ \sup_j\int_{\Omega}(dd^c\varphi_j)^n<+\infty\},\\
\mathcal{E}(\Omega)
&=\{\varphi\in\mbox{PSH}^-(\Omega):\exists \ \varphi_K\in\mathcal{F}(\Omega)\ \mathrm{such}\ \mathrm{that}\ \varphi_K=\varphi \ \mathrm{on}\ K,\forall\ K\subseteq\subseteq \Omega\}.
\end{align*}
We call $\mathcal{E}(\Omega)$ the Cegrell class of $\Omega$.
\end{defn}
The importance of the Cegrell class lies in the fact that:
if $\varphi_j\in\mathcal E(\Omega),\ 1\leq j\leq n$, then the mixed Monge-Amp\`ere measure $dd^c\varphi_1\wedge\cdots\wedge dd^c\varphi_n$ is well defined as a positive measure on $\Omega$
and satisfies a series of good properties.

We need the following fact that is proved in \cite[Corollary 2.1]{ACP10}, \cite[Theorem 5.7]{Wi05},
for the sake of the completeness, we provide a proof here.
\begin{lem}
Suppose $\varphi\in\mathcal{E}(\mathbb{B}^n),\psi\in \mathcal{E}(\mathbb{B}^m)$,
viewing $\varphi,\psi$ as functions on $\mathbb{B}^n\times\mathbb{B}^m$,
then $\max\{\varphi,\psi\}\in \mathcal{E}(\mathbb{B}^n\times\mathbb{B}^m)$.
\end{lem}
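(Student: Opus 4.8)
The plan is to verify the defining property of the Cegrell class $\mathcal{E}(\mathbb{B}^n\times\mathbb{B}^m)$ for $u:=\max\{\varphi,\psi\}$, namely that for every $K\Subset\mathbb{B}^n\times\mathbb{B}^m$ there is $u_K\in\mathcal{F}(\mathbb{B}^n\times\mathbb{B}^m)$ with $u_K=u$ on $K$. First I would reduce to products $K\subseteq K_1\times K_2$ with $K_1\Subset\mathbb{B}^n$, $K_2\Subset\mathbb{B}^m$, which suffices since any compact subset is contained in such a product. By definition of $\mathcal{E}(\mathbb{B}^n)$ and $\mathcal{E}(\mathbb{B}^m)$ there are $\varphi_{K_1}\in\mathcal{F}(\mathbb{B}^n)$ with $\varphi_{K_1}=\varphi$ on $K_1$ and $\psi_{K_2}\in\mathcal{F}(\mathbb{B}^m)$ with $\psi_{K_2}=\psi$ on $K_2$. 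The natural candidate is then $u_K:=\max\{\varphi_{K_1},\psi_{K_2}\}$, viewed as a function on $\mathbb{B}^n\times\mathbb{B}^m$; it is plurisubharmonic and negative there, it agrees with $u$ on $K_1\times K_2\supseteq K$, and it remains to check $u_K\in\mathcal{F}(\mathbb{B}^n\times\mathbb{B}^m)$.

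For the membership in $\mathcal{F}$, I would use the approximating sequences: pick $\mathcal{E}_0(\mathbb{B}^n)\ni a_j\searrow\varphi_{K_1}$ and $\mathcal{E}_0(\mathbb{B}^m)\ni b_j\searrow\psi_{K_2}$ with $\sup_j\int_{\mathbb{B}^n}(dd^c a_j)^n<\infty$ and $\sup_j\int_{\mathbb{B}^m}(dd^c b_j)^m<\infty$. Then $c_j:=\max\{a_j,b_j\}$ on $\mathbb{B}^n\times\mathbb{B}^m$ is a decreasing sequence of bounded negative plurisubharmonic functions tending to $0$ at the boundary, hence lies in $\mathcal{E}_0(\mathbb{B}^n\times\mathbb{B}^m)$ once we know the total Monge--Amp\`ere mass is finite; and $c_j\searrow u_K$. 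So the whole point is the uniform bound
\begin{align*}
\sup_j\int_{\mathbb{B}^n\times\mathbb{B}^m}(dd^c c_j)^{n+m}<+\infty.
\end{align*}

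The main obstacle, and the technical heart of the argument, is precisely this mass estimate for $(dd^c\max\{a_j,b_j\})^{n+m}$. The strategy I would follow is to bound it by the mass of the separate-variable function $a_j(z)+b_j(w)$: by the multilinearity of the mixed Monge--Amp\`ere operator and the fact that $dd^c_z a_j$ and $dd^c_w b_j$ act in complementary sets of variables, $(dd^c(a_j\oplus b_j))^{n+m}=\binom{n+m}{n}(dd^c_z a_j)^n\wedge(dd^c_w b_j)^m$, whose total mass is the product of the two given finite masses. To compare $\max\{a_j,b_j\}$ with $a_j+b_j$ one uses a Cegrell-type comparison: since both functions are bounded, negative, vanish at the boundary, and $\max\{a_j,b_j\}\geq a_j+b_j$ (both $a_j,b_j\leq 0$), the comparison principle for the Monge--Amp\`ere operator on $\mathcal{E}_0$ gives $\int(dd^c\max\{a_j,b_j\})^{n+m}\leq\int(dd^c(a_j+b_j))^{n+m}$, uniformly in $j$. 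Once this bound is in hand, $c_j\in\mathcal{E}_0(\mathbb{B}^n\times\mathbb{B}^m)$, the sequence witnesses $u_K\in\mathcal{F}(\mathbb{B}^n\times\mathbb{B}^m)$, and since $K$ was arbitrary, $u=\max\{\varphi,\psi\}\in\mathcal{E}(\mathbb{B}^n\times\mathbb{B}^m)$. I would also remark that the hyperconvexity of the product $\mathbb{B}^n\times\mathbb{B}^m$ (needed for the definitions to apply) is immediate from that of the factors.
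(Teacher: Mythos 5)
Your overall strategy — reduce to $\mathcal{F}$, take $\mathcal{E}_0$-approximations $a_j,b_j$, and bound $\sup_j\int(dd^c\max\{a_j,b_j\})^{n+m}$ — parallels the paper. However, there is a genuine gap in the step where you invoke a comparison principle to deduce $\int(dd^c\max\{a_j,b_j\})^{n+m}\leq\int(dd^c(a_j(z)+b_j(w)))^{n+m}$ from the pointwise inequality $a_j+b_j\leq\max\{a_j,b_j\}$. Pointwise domination alone is never enough for a total-mass comparison: you also need control of the functions at $\partial(\mathbb{B}^n\times\mathbb{B}^m)$, and $a_j(z)+b_j(w)$ does \emph{not} tend to $0$ there (e.g.\ as $z\to\partial\mathbb{B}^n$ with $w$ fixed interior, the sum tends to $b_j(w)<0$), so it is not in $\mathcal{E}_0(\mathbb{B}^n\times\mathbb{B}^m)$. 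To see concretely why the bare inequality $u\leq v$ cannot imply $\int(dd^c v)^{n+m}\leq\int(dd^c u)^{n+m}$ here, note that $a_j(z)\leq\max\{a_j(z),b_j(w)\}$ as well, while $(dd^c_{(z,w)}a_j(z))^{n+m}\equiv 0$ on the product (the form lives only in $n$ of the $n+m$ complex directions); a naive comparison would then give $\int(dd^c\max\{a_j,b_j\})^{n+m}\leq 0$, which is false. The version of the comparison principle you would actually need (Cegrell's: $u\leq v$ with $u,v\in\mathcal{F}(\Omega)$ implies $\int(dd^c v)^{n+m}\leq\int(dd^c u)^{n+m}$) requires you to first establish that $a_j+b_j\in\mathcal{F}(\mathbb{B}^n\times\mathbb{B}^m)$ and that $\max\{a_j,b_j\}\in\mathcal{F}(\mathbb{B}^n\times\mathbb{B}^m)$; the first is essentially the product property of \cite{ACP10} (a nontrivial result) and the second is finiteness of total MA mass, which is the very thing being proved — so the argument as written is incomplete and partly circular.

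The paper avoids all of this by first performing a balayage of the approximants so that $(dd^c\tilde{\varphi}_j)^n=0$ on a collar near $\partial\mathbb{B}^n$ (and similarly for $\tilde{\psi}_j$), and then invoking B\l{}ocki's product-variable formula (\cite[Theorem 7]{Blo00}): for $\delta>0$ small, $(dd^c\max\{\tilde{\varphi}_j,\tilde{\psi}_j,-\delta\})^{n+m}=(dd^c\max\{\tilde{\varphi}_j,-\delta\})^n\wedge(dd^c\max\{\tilde{\psi}_j,-\delta\})^m$. Together with an integration by parts (which is valid precisely because the functions are unperturbed near the boundary after balayage), this gives the \emph{exact} equality $\int(dd^c\max\{\tilde{\varphi}_j,\tilde{\psi}_j\})^{n+m}=\big(\int(dd^c\tilde{\varphi}_j)^n\big)\big(\int(dd^c\tilde{\psi}_j)^m\big)$, uniformly bounded in $j$. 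If you want to salvage the sum-comparison route, you would either need to prove $a_j+b_j\in\mathcal{F}(\mathbb{B}^n\times\mathbb{B}^m)$ independently, or restrict to a subdomain where boundary values of the max and the sum agree (which they don't), or adopt the balayage normalization as the paper does — at which point B\l{}ocki's formula is more direct than the detour through $a_j+b_j$.
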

\begin{proof}
Without loss of generality, we may assume $\varphi,\psi$ are in the class of $\mathcal{F}$.
By definition, there exists $\varphi_j\in\mathcal{E}_0(\mathbb{B}^n),\psi_j\in \mathcal{E}_0(\mathbb{B}^m)$ decreasing to $\varphi,\psi$ such that $\sup_j\int_{\mathbb{B}^n}(dd^c\varphi_j)^n<+\infty$, $\sup_j\int_{\mathbb{B}^m}(dd^c\psi_j)^m<+\infty$.
We consider
\begin{align*}
\tilde{\varphi}_j&:=\big(\sup\{u\in\mbox{PSH}^-(\mathbb{B}^n):u\leq\varphi_j\ \mathrm{on}\ (1-\frac{1}{j})\mathbb{B}^n\}\big)^*,\\
\tilde{\psi}_j&:=\big(\sup\{v\in\mbox{PSH}^-(\mathbb{B}^m):u\leq\psi_j\ \mathrm{on}\ (1-\frac{1}{j})\mathbb{B}^m\}\big)^*.
\end{align*}
Clearly, $\tilde{\varphi}_j$ still decreases to $\varphi$.
A standard balayage method yields that $(dd^c\tilde{\varphi}_j)^n=0$ on $\{1-\frac{1}{j}<|z|<1\}$.
Since $\tilde{\varphi}_j\geq\varphi_j$, comparison principle yields that $\int_{\mathbb{B}^n}(dd^c\varphi_j)^n\geq\int_{\mathbb{B}^n}(dd^c\tilde{\varphi}_j)^n$.
The same conditions hold for $\tilde{\psi_j}$.
In the rest of the proof, we replace $\varphi_j,\psi_j$ by $\tilde{\varphi}_j,\tilde{\psi}_j$.
We would like show that for $j=1,2,\cdots$,
\begin{align*}
\int_{\mathbb{B}^n\times\mathbb{B}^m}(dd^c\max\{\varphi_j,\psi_j\})^{n+m}
=\big(\int_{\mathbb{B}^n}(dd^c\varphi_j)^n\big)\big(\int_{\mathbb{B}^m}(dd^c\psi_j)^m\big).
\end{align*}
We choose $\delta=\delta(j)>0$ sufficiently small, so that $\{\varphi_j>-\delta\}\subseteq\subseteq\{1-\frac{1}{j}<|z|<1\}$ and $\{\psi_j>-\delta\}\subseteq\subseteq\{1-\frac{1}{j}<|w|<1\}$.
Since Monge-Amp\`ere products are local in the plurifine topology, it holds that $\mathds{1}_{\{\varphi_j>-\delta\}}(dd^c\varphi_j)^n=\mathds{1}_{\{\varphi_j>-\delta\}}(dd^c\max\{\varphi_j,-\delta\})^n$.
This implies $(dd^c\max\{\varphi_j,-\delta\})^n=0$ on $\{\varphi_j>-\delta\}$.
Similarly, $(dd^c\max\{\psi_j,-\delta\})^m=0$ on $\{\psi_j>-\delta\}$.
By \cite[Theorem 7]{Blo00}, $(dd^c\max\{\varphi_j,\psi_j,-\delta\})^{n+m}=(dd^c\max\{\varphi_j,-\delta\})^n\wedge(dd^c\max\{\psi_j,-\delta\})^m$ on $\mathbb{B}^n\times\mathbb{B}^m$.
In particular,
\begin{align*}
\int_{\mathbb{B}^n\times\mathbb{B}^m}(dd^c\max\{\varphi_j,\psi_j,-\delta\})^{n+m}
=\big(\int_{\mathbb{B}^n}(dd^c\max\{\varphi_j,-\delta\})^n\big)\big(\int_{\mathbb{B}^m}(dd^c\max\{\psi_j,-\delta\})^m\big).
\end{align*}
Note that $\max\{\varphi_j,\psi_j,-\delta\}=\max\{\varphi_j,\psi_j\}$ near $\partial(\mathbb{B}^n\times\mathbb{B}^m)$, $\max\{\varphi_j,-\delta\}=\varphi_j$ near $\partial\mathbb{B}^n$ and $\max\{\psi_j,-\delta\}=\psi_j$ near $\partial\mathbb{B}^m$, an integration by parts leads to the desired equation.
As a consequence, $\sup_j\int_{\mathbb{B}^n\times\mathbb{B}^m}(dd^c\max\{\varphi_j,\psi_j\})^{n+m}<+\infty$, the proof is complete.
\end{proof}



We now give the proof of Theorem \ref{thm:max final version}.
\begin{proof}
The proof is divided into several steps.
\begin{itemize}
\item
Step 1. Theorem \ref{thm:max final version} holds for almost every $A\in\mc^{nm}$.
\end{itemize}

Since $\max\{\varphi,\psi\}\in\mathcal{E}(\mathbb{B}^n\times\mathbb{B}^m)$, we can proceed as in the proof of Theorem \ref{thm:max} to derive
\begin{align*}
0=\int_{\{0\}}(dd^c\max\{\varphi,\psi\})^n\wedge(dd^c\log|(z,w)|)^m.
\end{align*}
Let $R_0>0$ be fixed.
There is $r_0>0$ so that $|Az|<1$ provided that $\|A\|<R_0,|z|<r_0$, which leads to $\max\{\varphi,A^*\psi\}$ is well-defined on $r_0\mathbb{B}^n$ for $\|A\|<R_0$.
Since belonging to $\mathcal{E}$ is a local property by \cite{Blo06}, we have $\varphi\in\mathcal{E}(r_0\mathbb{B}^n)$.
Further, $\int_{\{0\}}(dd^c\max\{\varphi,A^*\psi\})^n$ only depends on the value of $\varphi$ near the origin, we may even assume $\varphi\in\mathcal{F}(r_0\mathbb{B}^n)$,
thus there is a sequence $\varphi_j\in\mathcal{E}_0(r_0\mathbb{B}^n)$ decreasing to $\varphi$ and $\sup_j\int_{r_0\mathbb{B}^n}(dd^c\varphi_j)^n<+\infty$.
Comparison principle implies $$\int_{r_0\mathbb{B}^n}(dd^c\max\{\varphi_j,A^*\psi\})^n\leq \int_{r_0\mathbb{B}^n}(dd^c\varphi_j)^n,$$
therefore the set
$$\left\{\int_{r_0\mathbb{B}^n}(dd^c\max\{\varphi_j,A^*\psi\})^n:j\in\mathbb{N}^*,\|A\|<R_0\right\}$$
is bounded.
The graph of the mappings $A:\mc^n\ra\mc^m,z\mapsto Az$ for all $\|A\|<R_0$, given a Zariski open set in $G(n,n+m)$,  denoted by $U_0$.
Let $\chi_r$ be a sequence of cut-off functions decreasing to $\mathds{1}_{\{0\}}$ on $\mc^{nm}$, we then compute that
\begin{align*}
0&=\int_{\{0\}}(dd^c\max\{\varphi,\psi\})^n\wedge(dd^c\log|(z,w)|)^m\\
&=\lim_{r\ra0}\lim_{j\ra\infty}\int_{\mc^{nm}}\chi_r(dd^c\max\{\varphi_j,\psi\})^n\wedge\int_{G(n,n+m)}[S]dv(S)\\
&=\lim_{r\ra0}\lim_{j\ra\infty}\int_{G(n,n+m)} \big(\int_{\mc^{nm}}\chi_r(dd^c\max\{\varphi_j, \psi\})^n\wedge[S]\big)dv(S)\\
&\geq \lim_{r\ra0}\lim_{j\ra\infty}\int_{U_0} \big(\int_{\mc^{nm}}\chi_r(dd^c\max\{\varphi_j, \psi\})^n\wedge[S]\big)dv(S)\\
&=\lim_{r\ra0}\lim_{j\ra\infty}\int_{\{\|A\|<R_0\}}\Big(\int_{\mc^n}\chi_r(z,Az)(dd^c_z\max\{\varphi_j,A^*\psi\})^n\Big)d\tilde{v}(A)\\
&=\int_{\{\|A\|<R_0\}}\Big(\int_{\{0\}}(dd^c_z\max\{\varphi,A^*\psi\})^n\Big)d\tilde{v}(A).
\end{align*}
The second equation is derived from the monotone convergence
$$(dd^c\max\{\varphi_j,\psi\})^n\wedge(dd^c\log|(z,w)|)^m\ra(dd^c\max\{\varphi,\psi\})^n\wedge(dd^c\log|(z,w)|)^m.$$
Combining $$\sup_{j,A}\int_{r_0\mathbb{B}^n}(dd^c\max\{\varphi_j,A^*\psi\})^n<+\infty$$
and the weak convergence $$(dd^c\max\{\varphi_j,A^*\psi\})^n\ra(dd^c\max\{\varphi,A^*\psi\})^n,\ \forall A,$$
the dominated convergence theorem yields the last equation.
Since $R_0$ is arbitrary, the conclusion follows as desired.


\begin{itemize}
\item
Step 2. Theorem \ref{thm:max final version} holds for every $A$ outside a pluripolar set.
\end{itemize}

We may assume $\varphi\in\mathcal{F}(\mathbb{B}^n)$, $\psi<-2$, $\psi$ is defined in a neighborhood of $\overline{\mathbb{B}^m}$ and work on $\mathbb{B}^{nm}=\{A:\|A\|<1\}$.
By \cite[Lemma 2.1, Proposition 5.1]{Ce04}, there is a decreasing sequence $\varphi_j\in\mbox{PSH}(\mathbb{B}^n)\cap \mathcal{C}(\overline{\mathbb{B}^n})$ so that $\lim_{j\ra\infty}\varphi_j=\varphi$, $\varphi_j|_{\partial \mathbb{B}^n} =0$ and $$\int_{\mathbb{B}^n}(dd^c\varphi)^n=\lim_{j\ra\infty}\int_{\mathbb{B}^n}(dd^c\varphi_j)^n=\sup_{j}\int_{\mathbb{B}^n}(dd^c\varphi_j)^n<+\infty.$$
For each $j$, there is a positive number $r_j<1$ such that $\varphi_j\geq-1$ on $\{|z|\geq r_j\}$,
we then choose a smooth, increasing function $\chi_j\in\mathcal{C}^{\infty}(\mathbb{R};\mr)$ such that $\chi_j(t)=t$ near $\{t\leq\log r_j\}$ and $\chi_j(t)=0$ near $\{t\geq0\}$.
Furthermore, $\chi_j$ can be chosen so that $\chi_j$ uniformly converges to $\chi$ on $\mr$, where $\chi(t)=t$ on $\{t<0\}$, $\chi(t)=0$ on $\{t\geq0\}$.
Consider the following plurisubharmonic functions on $\mathbb{B}^n\times\mathbb{B}^{nm}\times H$
\begin{align*}
\Phi_j:=\max\{\varphi_j,A^*\psi_j\},
\Phi:=\max\{\varphi,A^*\psi\},
\Psi:=\max\{\log|z|,\mathrm{Re}w\},
\end{align*}
where $\psi_j=\psi*\rho_{\varepsilon_j}$ is the standard approximation of $\psi$.
Then we define
\begin{align*}
(A,w)\mapsto V_j(A,w):&=-\int_{\mathrm{Re}w}^0\chi_j^{\prime}(t)\Big(\int_{\{z\in\mathbb{B}^n:\log|z|<t\}\}}(dd^c_z\Phi_j)^n\Big)dt\\
&=\int_{\mathbb{B}^n}\chi_j(\Psi)(dd^c_z\Phi_j)^n,
\end{align*}
\begin{align*}
(A,w)\mapsto V(A,w):&=-\int_{\mathrm{Re}w}^0\Big(\int_{\{z\in\mathbb{B}^n:\log|z|<t\}\}}(dd^c_z\Phi)^n\Big)dt\\
&=\int_{\mathbb{B}^n}\Psi(dd^c_z\Phi)^n.
\end{align*}

\begin{itemize}
\item
Step 2.1. $V_j$ is plurisubharmonic on $\mathbb{B}^{nm}\times H$.
\end{itemize}

To achieve this, we introduce the following approximating sequence:
\begin{align*}
V_{j,k}(A,w):=\int_{\mathbb{B}^n}\chi_j(\Psi_k)(dd^c_z\Phi_{j,k})^n,
\end{align*}
where $\Psi_k:=\max_{\varepsilon_k}\{\log|z|,\mathrm{Re}w\}$, $\Phi_{j,k}:=\max_{\varepsilon_k}\{\varphi_j*\rho_{\varepsilon_k},A^*\psi_j\}$.
Although $\Phi_{j,k}$ is not well-defined on the whole $\mathbb{B}^n$, $\chi_j(\Psi)$ has compact support in $\mathbb{B}^n$, the above integrations still make sense for $k$ large.
We would like to show
\begin{itemize}
\item[(1)]
$V_{j,k}$ is plurisubharmonic on $\mathbb{B}^{nm}\times H$,
\item[(2)]
$\lim_{k\ra\infty}V_{j,k}=V_j$ on $\mathbb{B}^{nm}\times H$.
\end{itemize}
As in the proof of Theorem \ref{thm:max},
for test form $h$ with compact support on $\mathbb{B}^{nm}\times H$, we compute
\begin{align*}
\langle dd^c_{A,w}V_{j,k},h\rangle
=&\langle V_{j,k},dd^c_{A,w}h\rangle\\
=&\int_{\mathbb{B}^n\times\mathbb{B}^{nm}\times H} dd^c\chi_j(\Psi_k)\wedge (dd^c\Phi_{j,k})^n\wedge h\\
=&\int_{\{|z|<r_j\}\times\mathbb{B}^{nm}\times H} dd^c\Psi_k\wedge(dd^c\Phi_{j,k})^n\wedge h.
\end{align*}
The last equality holds because
$$dd^c\chi_j(\Psi_k)\wedge(dd^c\Phi_{j,k})^n=dd^c\chi_j(\log|z|)\wedge (dd^c\varphi_j*\rho_{\varepsilon_k})^n=0$$ on $\{|z|\geq r_j\}\times\mathbb{B}^{nm}\times H.$
$(1)$ is therefore proved.
Since $(dd^c_z\Phi_{j,k})^n\ra(dd^c_z\Phi_j)^n$ weakly and $\chi_j(\Psi_k)$ converges uniformly to $\chi_j(\Psi)$, $(2)$ also follows.
We then infer that $dd^cV_j\geq0$ in the sense of currents.
To prove $V_j$ is plurisubharmonic, it is sufficient to show $V_j$ is continuous.
Indeed, for $(A_0,w_0)$ being fixed, since $\max\{\varphi_j,A^*\psi_j\}$ uniformly converges to $\max\{\varphi_j,A_0^*\psi_j\}$ as $A\ra A_0$, we obtain the weak convergence $(dd^c_z\Phi_j(z,A))^n\ra(dd^c_z\Phi_j(z,A_0))^n$.
It is clear that $\chi_j(\Psi(z,w))\rightrightarrows\chi_j(\Psi(z,w_0))$ on $\mathbb{B}^n$ as $ w\ra w_0$, the desired continuity follows.
\begin{itemize}
\item
Step 2.2. $\lim_{j\ra\infty} V_j=V$ on $\mathbb{B}^{nm}\times H$.
\end{itemize}

We fix $(A,w)\in \mathbb{B}^{nm}\times H$.
By $\Phi_j(z,A)= \varphi_j(z)$ near $\partial\mathbb{B}^n$, an integration by parts yields that $\int_{\mathbb{B}^n}(dd^c_z\Phi_j(z,A))^n=\int_{\mathbb{B}^n}(dd^c\varphi_j)^n$, hence is uniformly bounded.
Moreover, since $\chi_j$ converges uniformly, we see that for every $\delta>0$, if $j_0$ is large enough,
\begin{align*}
\Big|\limsup_{j\ra\infty}\big(\int_{\mathbb{B}^n}\chi_j(\Psi)(dd^c_z\Phi_j(z,A)))^n
-\int_{\mathbb{B}^n}\chi_{j_0}(\Psi)(dd^c_z\Phi_j(z,A))^n\big) \Big|<\delta.
\end{align*}
Similarly, the inequality
$$\int_{\mathbb{B}^n}(dd^c_z\Phi(z,A))^n=\lim_{j\ra\infty}\int_{\mathbb{B}^n}(dd^c_z\Phi_j(z,A))^n<+\infty$$
implies that for $j_0$ large,
$$|\int_{\mathbb{B}^n}\Psi(dd^c_z\Phi(z,A))^n-\int_{\mathbb{B}^n}\chi_{j_0}(\Psi)(dd^c_z\Phi(z,A))^n|<\delta.$$
From the weak convergence $(dd^c_z\Phi_j)^n\ra(dd^c_z\Phi)^n$, we can deduce
$$\lim_{j\ra\infty}\int_{\mathbb{B}^n}\chi_{j_0}(\Psi)(dd^c_z\Phi_j(z,A))^n=\int_{\mathbb{B}^n}\chi_{j_0}(\Psi)(dd^c_z\Phi(z,A))^n.$$
Letting $\delta\ra0$, it follows that
\begin{align*}
\lim_{j\ra\infty}V_j(A,w)=\lim_{j\ra\infty}\int_{\mathbb{B}^n}\chi_{j}(\Psi)(dd^c_z\Phi_j(z,A))^n=\int_{\mathbb{B}^n}\Psi(dd^c_z\Phi(z,A))^n=V(A,w).
\end{align*}
Since $V_j$ is independent of $\mathrm{Im}w$, for $r>0$, the function $U^r_j$ on $\mathbb{B}^{nm}$
\begin{align*}
A\mapsto  U_j^r(A):=\inf_{t<-2}\{V_j(A,t)-rt\}
\end{align*}
is plurisubharmonic.
Similarly, we define
\begin{align*}
A\mapsto U^r(A):=\inf_{t<-2}\{V(A,t)-rt\}
\end{align*}
\begin{itemize}
\item
Step 2.3. Get a uniform bound for $\{U^r_j(A):j=1,2,\cdots\}$ for $A$ generic.
\end{itemize}

We shall prove if $\int_{\{0\}}(dd^c_z\Phi(z,A))^n=0$, then $\liminf_{j\ra\infty}U_j^r(A)>-\infty$ for all $r>0$.
Indeed, the weak convergence $(dd^c_z\Phi_j)^n\ra(dd^c_z\Phi)^n$ implies there is a number $t_0<-2$ so that $\int_{\{\log|z|<t_0\}}(dd^c_z\Phi_j)^n\leq\frac{r}{2}$ for all $j$.
So we deduce that the infimum of $t\mapsto V_j(A,t)-rt$ is taken on the segment $(t_0,-2)$ for all $j$.
However,
$$V_j(A,t)=\int_{\mathbb{B}^n}\chi_j(\Psi)(dd^c_z\Phi_j)^n\geq t_0\int_{\mathbb{B}^n}(dd^c_z\Phi_j)^n,$$
and the latter is uniformly bounded thanks to
$$\sup_{j}\int_{\mathbb{B}^n}(dd^c_z\Phi_j)^n=\sup_{j}\int_{\mathbb{B}^n}(dd^c\varphi_j)^n<+\infty.$$
\begin{itemize}
\item
Step 2.4. $E_0:=\{A\in\mathbb{B}^{nm}:\int_{\{0\}}(dd^c_z\Phi(z,Az))^n>0\}$ is pluripolar.
\end{itemize}

Since $U^r_j$ does not converge to $-\infty$, we obtain a plurisubharmonic function on $\mathbb{B}^{nm}$
\begin{align*}
A\mapsto\tilde{U}^r(A):=\lim_{j\ra\infty}(\sup_{k\geq j}U^r_{j}(A))^*.
\end{align*}
Using the fact that $\lim_{j\ra\infty} V_j=V$ on $\mathbb{B}^{nm}\times H$, we get $\limsup_{j\ra\infty} U_{j}^r\leq U^r$ on $\mathbb{B}^{nm}$.
Therefore $\tilde{U}^r\leq U^r$ outside a pluripolar set $P_r$.
This is because $$P_{r,j}:=\{A:(\sup_{k\geq j}U^{r}_{j})^*\neq \sup_{k\geq j}U^{r}_{j}\}$$ is pluripolar for every $j$, then $P_r:=\cup_j P_{r,j}$ is what we want.
We observe that
\begin{itemize}
\item
$U^r(A)=-\infty$, if $\int_{\{0\}}(dd^c_z\Phi(z,A))^n>r$,
\item
$U^r(A)>-\infty$, if $\int_{\{0\}}(dd^c_z\Phi(z,A))^n<r$.
\end{itemize}
For $r\geq 0$, we set
\begin{align*}
E_r:=\{A\in\mathbb{B}^{nm}:\int_{\{0\}}(dd^c_z\Phi(z,A))^n>r\}.
\end{align*}
The above analyses yield that for $r>0$,
\begin{itemize}
\item
$E_r\subseteq\{U^r=-\infty\}$,
\item
$\{U^r=-\infty\}\subseteq\{\tilde{U}^r=-\infty\}\cup P_r$.
\end{itemize}
Note that $$E_0=\bigcup_{k=1}^{\infty}\{U^{\frac{1}{k}}=-\infty\}\subseteq\bigcup_{k=1}^{\infty} (\{\tilde{U}^{\frac{1}{k}}=-\infty\}\cup P_{\frac{1}{k}}),$$
the proof is therefore complete.
\end{proof}

Finally we can deduce Theorem \ref{thm:n-1} from Theorem \ref{thm:max final version} using Lemma \ref{lem:Blocki formula},
in the same way as in the case that $\varphi, \phi$ has isolated singularities, as shown in \S \ref{sec:max isolated sing}.


\begin{thebibliography}{10}

\bibitem{BGZ}
S. Benelkourchi, V. Guedj, A. Zeriahi, Plurisubharmonic functions with weak singularities, {\it Complex analysis and digital geometry}, In honor of C.O. Kiselman, 57-74, Uppsala, 2009.

\bibitem{BFJ08}
S. Boucksom S, C. Favre, M. Jonsson, Valuations and plurisubharmonic singularities, Publ. Res. Inst. Math. Sci., \bf44\rm: 449-494, 2008.

\bibitem{Blo00}
Z. B\l ocki, Equilibrium measure of a product subset of $\mc^n$, Proc. Amer. Math. Soc. \bf128 \rm,  no. 12, 3595-3599, (2000).

\bibitem{Blo06}
Z. B\l ocki, The domain of definition of the complex Monge-Amp\`ere operator, Amer. J. Math. \bf128\rm(2), 519-530 (2006).

\bibitem{Ce98}
U. Cegrell, Pluricomplex energy, Acta Math., \bf180\rm(2):187-217 (1998).

\bibitem{Ce04}
U. Cegrell, The general definition of the complex Monge-Amp\`ere operator. Ann. L'Inst. Fourier(Grenoble). \bf54\rm(1), 159-179 (2004).

\bibitem{Ce12}
U. Cegrell, Convergence in capacity. Canadian mathematical bulletin \bf55\rm(2), 242-248, (2012).

\bibitem{CGZ}
D. Coman, V. Guedj, A. Zeriahi, Domains of definition of compelx Monge-Amp\`ere operators on compact K\"ahler manifolds,
Math. Zeitschrift (2008).

\bibitem{ACP10}
P. \AA hag, U. Cegrell and H.~H. Ph\d am, A product property for the pluricomplex energy, Osaka J. Math. {\bf 47}, 617-650, (2010).

\bibitem{ACP19}
P. \AA hag, U. Cegrell and H.~H. Ph\d am, On the Guedj-Rashkovskii conjecture, Ann. Polon. Math. {\bf 123} (2019), no.~1, 15--20; MR4025006.

\bibitem{Dem92}
J.-P. Demailly, Regularization of closed positive currents and intersection theory, J. Algebraic Geom. {\bf 1} (1992), no.~3, 361--409; MR1158622

\bibitem{Dem}
			J.P. Demailly, Complex analytic and differential geometry, Available at
		     \url{https://www-fourier.ujf-grenoble.fr/~demailly/documents.html}.

\bibitem{Dem12}		
 J.P. Demailly, {\it Analytic methods in algebraic geometry}, Surveys of Modern Mathematics, 1, Int. Press, Somerville, MA, 2012 Higher Ed. Press, Beijing, 2012; MR2978333.

\bibitem{DH14}
J.P. Demailly, H.~H. Ph\d am, A sharp lower bound for log canonical threshold, Acta Math. \bf212\rm (1), 1-9 (2014).

\bibitem{D}
S. Dinew, An inequality for mixed Monge-Amp\`ere measures,  Math. Zeitschrift, 262(1) (2009), 1-15, DOI:10.1007/s00209-008-0356-z.

\bibitem{DD}
S. Dinew, Z. Dinew, Differential tests for plurisubharmonic functions and Koch curves, arXiv.1607.00893.

\bibitem{DD19}S. Dinew, Z. Dinew, Local singularities of plurisubharmonic functions, in {\it Advances in Complex Geometry}, pp.105-143, Contemporary Mathematics, vol. 735, American Mathematical Society, Providence, RI, 2019.

\bibitem{DGZ16}
S. Dinew, V. Guedj, A. Zeriahi, Open problems in pluripotential theory, Complex Var. Elliptic Equ.,
61(7): 902-930, 2016.

\bibitem{Guj10}V. Guedj, Propri\'et\'es ergodiques des applications rationnelles. Cantat S., Chambert A., Guedj
V., editors. Quelques aspects des syst\`emes dynamiques polynomiaux. Vol. 30, Panoramas et
Syntheses. 2010.

\bibitem{HLX23}
W. He, L. Li, X. Xu, On the residual Monge-Amp\`ere mass of plurisubharmonic functions with symmetry, II, arXiv:2309.13288.

\bibitem{HLX24}
W. He, L. Li, X. Xu, On the residual Monge-Amp\`ere mass of plurisubharmonic functions with symmetry, III: a single frequency, arXiv:2410.15014v1.

\bibitem{KR21}
D. Kim and A. Rashkovskii, Higher Lelong numbers and convex geometry, J. Geom. Anal. {\bf 31} (2021), no.~3, 2525--2539; MR4225816.

\bibitem{K}C. O. Kiselman, Plurisubharmonic functions and potential theory in several complex variables, in {\it Development of Mathematics: 1950-2000} (Birkh\"auser, Basel, 2000), pp. 655-714.

\bibitem{LC24}
C. Li, Analytical approximations and Monge-Amp\`ere masses of plurisubharmonic singularities, Int. Math. Res. Not. IMRN (2024), no.~1, 359--381; MR4686654.

\bibitem{L23}
L. Li, On the residual Monge-Amp\`ere mass of plurisubharmonic functions with symmetry in $\mc^2$, Math. Z. 306 (2024), no. 1, Paper No. 13, 40 pp.

\bibitem{Ra01}A. Rashkovskii, Lelong numbers with respect to regular plurisubharmonic weights. Results Math. 2001; 39:320-332.

\bibitem{Ra06}
A. Rashkovskii, Relative types and extremal problems for plurisubharmonic functions, Int. Math. Res. Not. {\bf 2006}, Art. ID 76283, 26 pp.; MR2272098.

\bibitem{Ra09}
A. Rashkovskii, Analyticity and propagation of plurisubharmonic singularities, in {\it Functional analysis and complex analysis}, 137--143, Contemp. Math., 481, Amer. Math. Soc., Providence, RI, 2009; MR2497870.

\bibitem{Ra13-1}
A. Rashkovskii, Analytic approximations of plurisubharmonic singularities. Math. Z. 275 (2013), no. (3-4), 1217-1238.

\bibitem{Ra13-2}
A. Rashkovskii, Multi-circled singularities, Lelong numbers, and integrability index, J. Geom. Anal. {\bf 23} (2013), no.~4, 1976--1992; MR3107686.

\bibitem{Sa76}
A. Sadullaev, A boundary uniqueness theorem in $\mc^n$, Math. Sbor., \bf101\rm(143), no.~4, 568-583, 1976.

\bibitem{Siu74}
Y.~T. Siu, Analyticity of sets associated to Lelong numbers and the extension of closed positive currents, Invent. Math. {\bf 27} (1974), 53--156; MR0352516.


\bibitem{Wi05}
J. Wiklund, Pluricomplex charge at weak singularities, arXiv:math/0510671.

\bibitem{Z}Xiangyu Zhou, Roles of Plurisubharmonic Functions, in {\it Proceedings of the Steklov Institute of Mathematics}, Volume 306, pages 288-295, (2019) 

\end{thebibliography}
	\end{document}